\newfont{\msam}{msam10}
\newtheorem{theorem}[]{Theorem}
\newtheorem{proposition}[]{Proposition}
\newtheorem{corollary}[]{Corollary}
\newtheorem{lemma}[]{Lemma}
\theoremstyle{definition}
\newtheorem{remark}[]{Remark}
\newtheorem{example}[]{Example}
\let\nc\newcommand
\def\bthm{\begin{theorem}}
	\def\ethm{\end{theorem}}
\def\blemma{\begin{lemma}}
	\def\elemma{\end{lemma}}
\def\bproof{\begin{proof}}
	\def\eproof{\end{proof}}
\def\bprop{\begin{proposition}}
	\def\eprop{\end{proposition}}
\def\bcor{\begin{corollary}}
	\def\ecor{\end{corollary}}
\nc{\la}{\label}
\def\GrCoAlg{\mathtt{GrCoAlg}}
\def\GrCoLAlg{\mathtt{GrLieCoAlg}}
\def\GrCHAlg{\mathtt{GrCHopfAlg}}
\def\CDGC{\mathtt{CDGC}}
\def\CTw{\mathtt{CTw}}
\def\DGA{\mathtt{DGA}}
\def\CDGA{\mathtt{CDGA}}
\def\DGMod{\mathtt{DG\,Mod}}
\def\DGBimod{\mathtt{DG\,Bimod}}
\def\mfa{\mathfrak{a}}
\nc{\Ob}{{\rm Ob}}
\nc{\Hom}{{\rm{Hom}}}
\nc{\bHom}{{\mathbf{Hom}}}
\nc{\bDer}{{\mathbf{Der}}}
\nc{\Homcont}{{\mathcal{H}om}}
\nc{\HOM}{\underline{\rm{Hom}}}
\nc{\DER}{\underline{\rm{Der}}}
\nc{\END}{\underline{\rm{End}}}
\nc{\bSym}{\mathbf{Sym}}
\nc{\Ext}{{\rm{Ext}}}
\nc{\Rep}{{\rm{Rep}}}
\nc{\DRep}{{\rm{DRep}}}
\nc{\NCRep}{\widetilde{\rm{Rep}}}
\nc{\RAct}{{\rm{RAct}}}
\nc{\bs}{\backslash}
\nc{\ob}{{\tt{Obs}}}
\nc{\CE}{\mathcal{C}}
\nc{\TP}{{T\!P}}
\nc{\nn}{{{\natural} {\natural}}}
\nc{\n}{{{\natural}}}
\nc{\A}{\mathbb A}
\nc{\B}{{\mathrm{B}}}
\nc{\Ba}{\overline{\mathrm{B}}}
\nc{\bC}{\overline{C}}
\nc{\bOmega}{\boldsymbol{\Omega}}
\nc{\bB}{\boldsymbol{\mathrm{B}}}
\nc{\rbB}{\overline{\boldsymbol{\mathrm{B}}}}
\nc{\EXT}{\underline{\rm{Ext}}}
\nc{\TOR}{\underline{\rm{Tor}}}
\def\H{\mathrm H}
\def\HC{\mathrm{HC}}
\def\rHC{\overline{\mathrm{HC}}}
\def\rHH{\overline{\mathrm{HH}}}
\nc{\End}{{\rm{End}}}
\nc{\GL}{{\rm{GL}}}
\nc{\gl}{{\mathfrak{gl}}}
\nc{\rgl}{\overline{{\mathfrak{gl}}}}
\nc{\g}{{\mathfrak{g}}}
\nc{\h}{{\mathfrak{h}}}
\nc{\PGL}{{\rm{PGL}}}
\nc{\SL}{{\rm{SL}}}
\nc{\sll}{\mathfrak{sl}}
\nc{\cn}{ \mbox{\rm c\^{o}ne} }
\nc{\PSL}{{\rm{PSL}}}
\nc{\ad}{{\rm{ad}}}
\nc{\Ad}{{\rm{Ad}}}
\nc{\dlim}{\varinjlim}
\nc{\plim}{\varprojlim}
\nc{\colim}{{\tt{colim}}}
\newcommand{\HH}{{\rm{HH}}}
\newcommand{\Tot}{{\rm{Tot}}}
\newcommand{\Sym}{{\rm{Sym}}}
\newcommand{\id}{{\rm{id}}}
\newcommand{\Ker}{{\rm{Ker}}}
\newcommand{\Coker}{{\rm{Coker}}}
\def\cb{\boldsymbol{\Omega}}
\def\bs{\backslash}
\def\U{\mathcal{U}}
\nc{\env}{\mathrm{End}(V)}
\nc{\FT}{\mathcal{C}}
\numberwithin{equation}{section}
\numberwithin{theorem}{section}
\numberwithin{lemma}{section}
\numberwithin{proposition}{section}
\numberwithin{corollary}{section}
\numberwithin{example}{section}
\numberwithin{remark}{section}
\newcommand{\dual}{\text{!`}}
\nc{\Char}{{\rm{Ch}}}
\nc{\mfg}{\mathfrak{g}}
\nc{\mfG}{\mathfrak{G}}
\nc{\CS}{{\rm{CS}}}
\nc{\bS}{\mathbb{S}}
\nc{\cO}{\mathcal{O}}
\nc{\RHom}{\rm{RHom}}
\nc{\LL}{\mathcal{L}}
\nc{\Id}{\mathrm{Id}}
\nc{\defeq}{\vcentcolon=}
\nc{\eqdef}{=\vcentcolon}
\nc{\cone}{{\rm{cone}\,}}
\begin{document}
\title{Curved Koszul duality and cyclic (co)homology}

\author{Yining Zhang}
\address{Department of Mathematics,
University of Colorado Boulder,
Boulder, CO 80309, USA}
\email{Yining.Zhang@colorado.edu}

\begin{abstract}
We study the curved Koszul duality theory for associative algebras presented by quadratic-linear-constant (QLC) relations. As an application, we investigate the cyclic (co)homology of a QLC algebra and its Koszul dual curved DG algebra, and extend a result due to Feigin and Tsygan. We also study a commutative/Lie analog of this curved Koszul duality theory.
\end{abstract}
\maketitle

\section{Introduction}
Hochschild (co)homology of algebras was introduced by Hochschild \cite{Hoch45}. Later, Cartan and Eilenberg \cite{CE56} provided an equivalent definition in terms of derived functors. The canonical tool for computing Hochschild homology is the {\it bar resolution}. In some instances, however, one is able to find a simpler resolution. For example, let $\Sym(V)$ be the symmetric algebra over a vector space $V$. It turns out that the free $\Sym(V)$-bimodule generated by the exterior algebra over $V$, 
$$\Sym(V)\otimes \wedge V \otimes \Sym(V)\,,$$
with an appropriate differential, is a resolution of $\Sym(V)$ in the category of $\Sym(V)$-bimodules. For the tensor algebra $T(V)$, the following length $2$ complex can be used to compute the Hochschild homology of $T(V)$:
\begin{equation}\label{TVRes}
\cdots \longrightarrow 0 \longrightarrow T(V)\otimes V\otimes T(V) \longrightarrow T(V)\otimes T(V)\,,
\end{equation}
where the only nontrivial differential is given by $p\otimes v\otimes q\mapsto p\otimes vq -pv\otimes q$. 
An associative algebra presented by quadratic relations ({\it quadratic algebra} for short) is an algebra of the form $T(V)/(R)$, where $V$ is a vector space and the two-sided ideal $(R)$ is generated by $R\subset V^{\otimes 2}$. Both symmetric algebra and tensor algebra are examples of quadratic algebras. The Koszul duality theory for quadratic algebras was introduced by Priddy in \cite{Priddy}. In more detail, given a quadratic algebra $A=T(V)/(R)$, the quadratic data $(V,\,R)$ permits one to construct a graded coalgebra $A^\dual$ called the {\it Koszul dual coalgebra} of $A$,  and a degree $-1$ linear map $\kappa$ from $A^\dual$ to $A$ satisfying so called the {\it Maurer-Cartan equation}. $\kappa$ provides a differential on the graded free bimodule $A\otimes A^\dual \otimes A$. We denote the resulting complex by $A\otimes_\kappa A^\dual \otimes_\kappa A$, and call it the {\it total Koszul complex}. $A$ is called {\it Koszul} if the morphism from $A\otimes_\kappa A^\dual \otimes_\kappa A$ to $A$, induced by the the counit on $A^\dual$ and the multiplication on $A$, is a quasi-isomorphism. In this case, the total Koszul complex $A\otimes_\kappa A^\dual \otimes_\kappa A$ is called the {\it Koszul resolution} of the quadratic algebra $A$. If $A=\Sym(V)$, it can be checked that the underlying vector space of $A^\dual$ is isomorphic to the exterior algebra $\wedge V$. Moreover, if $A=T(V)$, then the Koszul dual coalgebra $A^\dual=k\oplus V[1]$, and the total Koszul complex $A\otimes_\kappa A^\dual \otimes_\kappa A$ is precisely \eqref{TVRes}.

The quadratic hypothesis $R\subset V^{\otimes 2}$ can be weakened by only requiring $R \subset V\oplus V^{\otimes 2}$. In this case, the algebra $A=T(V)/(R)$ is called a {\it quadratic-linear algebra} (QL algebra for short). The Koszul duality theory for QL algebras is also discussed in \cite{Priddy}. Let $q\,:\,T(V)\twoheadrightarrow  V^{\otimes 2}$ be the projection onto the quadratic part of tensor algebra, and $qR$ be the image of $R$ under $q$. $T(V)/(qR)$ is a quadratic algebra, denoted by $qA$, and is called the {\it associated quadratic algebra} of the QL algebra $A$. The linear terms in $R$ induce a differential on $(qA)^\dual$. Given an augmented associative algebra $A=k\oplus \bar{A}$, it can be viewed as a QL algebra by the isomorphism
$$A\,\cong\,T(\bar{A})/(R)\,,$$
where $R$ is generated by $a\otimes a' -aa'$ for $a,\,a'\in \bar{A}$. Then $(qA)^\dual$ equipped with the differential described above is nothing but the {\it bar construction} $\bB(A)$ of $A$. Similar to the case of quadratic algebras, one can construct a degree $-1$ map $\kappa$ from $(qA)^\dual$ to $A$ satisfying the Maurer-Cartan equation, and it would induce a differential on the complex $A\otimes_\kappa (qA)^\dual \otimes_\kappa A$. It turns out that if $qA$ is Koszul, there is a quasi-isomorphism $A\otimes_\kappa (qA)^\dual \otimes_\kappa A \xrightarrow{\sim} A$. The universal enveloping algebra $A=\U\g$ of a finite-dimensional Lie algebra $\g$ is a QL algebra, and $(qA)^\dual$ is the Chevalley-Eilenberg DG coalgebra of $\g$. Using the resolution $A\otimes_\kappa (qA)^\dual \otimes_\kappa A$, one could identify the Hochschild (co)homology of $\U\g$ with the Lie algebra (co)homology of $\g$ with coefficients in $\U\g$.

It is natural to further weaken the hypothesis and add constant terms to the space of relations $R$. In this case, $(V,\,R)$ is called a {\it quadratic-linear-constant} data, and $A=T(V)/(R)$ is called a quadratic-linear-constant algebra (QLC algebra for short). The Koszul dual (curved) algebra $(qA)^!$ of a QLC algebra $A$ was studied in \cite{Pos93, PP05}. In this paper, we study the the Koszul dual (curved) coalgebra $(qA)^\dual$ of $A$, and apply the curved Koszul duality developed in \cite {Pos11, HM12} to QLC algebras. In Section \ref{secPreliminary}, we review the Koszul duality theory for curved DG algebras and coalgebra and present some preliminary results. In particular, we show that given a linear map $\alpha$ of degree $-1$ from a curved DG coalgebra $C$ to a DG algebra $A$, if $\alpha$ is a {\it curved twisting morphism} (see Section \ref{subsecCTW} for the definition), then $\alpha$ provides a differential on the graded free bimodule $A\otimes C \otimes A$, and we denote the resulting complex by $A\otimes_\alpha C \otimes_\alpha A$ (see Proposition \ref{TwTensorBimod}).

In Section \ref{secQLC}, we show that the linear and constant terms in $R$ equip the Koszul dual coalgebra $(qA)^\dual$ the structure of a curved DG coalgebra (see Proposition \ref{KoszulCurCAg}). Moreover, there is a curved twisting morphism $\kappa\,:\,(qA)^\dual\rightarrow A$. To any coaugmented curved DG algebra $C$, one can associate a semi-free DG algebra $\cb(C)$, called the {\it cobar construction} of $C$. It turns out that if the quadratic algebra $qA$ is Koszul, $\kappa$ induces a quasi-isomorphism from $\cb((qA)^\dual)$ to $A$ (see Theorem \ref{KoszulFreeRes}). The {\it total Koszul complex} of a QLC algebra $A$ is the complex $A\otimes_\kappa(qA)^\dual\otimes_\kappa A$. Our first main result generalizes the Koszul resolution of quadratic algebras to QLC algebras.
\begin{theorem}\label{IntroKoszulRes}(see Theorem \ref{KoszulRes})
Let $A$ be a QLC algebra, and $(qA)^\dual$ be its Koszul dual curved coalgebra. Suppose $qA$ is Koszul, then the total Koszul complex $A\otimes_\kappa(qA)^\dual\otimes_\kappa A$ is a resolution of $A$ in the category of $A$-bimodules. 
\end{theorem}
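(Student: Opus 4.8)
The total Koszul complex $A\otimes_\kappa(qA)^\dual\otimes_\kappa A$ is by construction a complex of free $A$-bimodules, and it comes equipped with a canonical augmentation $\varepsilon\colon A\otimes_\kappa(qA)^\dual\otimes_\kappa A\to A$, namely the counit of $(qA)^\dual$ on the middle factor together with the multiplication of $A$. It therefore suffices to prove that $\varepsilon$ is a quasi-isomorphism; since the source is semifree over $\eA$, it is then automatically a resolution of $A$ in $A$-bimodules. The plan is to deduce this from Theorem \ref{KoszulFreeRes} by a base-change argument along the cobar construction.

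Let $\iota\colon(qA)^\dual\to\cb((qA)^\dual)$ denote the universal curved twisting morphism, so that $\kappa=\phi\circ\iota$, where $\phi\colon\cb((qA)^\dual)\to A$ is the quasi-isomorphism of DG algebras produced by Theorem \ref{KoszulFreeRes} (this is the only place the Koszulness of $qA$ is used). The first step I would carry out is a bimodule form of the acyclicity of the cobar construction: for any coaugmented curved DG coalgebra $C$, the two-sided twisted complex $\cb(C)\otimes_\iota C\otimes_\iota\cb(C)$ furnished by Proposition \ref{TwTensorBimod} is a semifree resolution of the diagonal bimodule $\cb(C)$. One proves this by filtering $C$ by its coradical filtration — for $C=(qA)^\dual$ this is the weight filtration coming from the grading of the quadratic dual — and observing that on the associated graded all twisting and curvature terms drop out, leaving a complex with an explicit contracting homotopy; conilpotency of $C$ makes the filtration exhaustive and bounded below, so the spectral sequence converges and the claim follows.

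Next I would identify $A\otimes_\kappa(qA)^\dual\otimes_\kappa A$ with the base change
\[
A\otimes_{\cb((qA)^\dual)}\bigl(\cb((qA)^\dual)\otimes_\iota(qA)^\dual\otimes_\iota\cb((qA)^\dual)\bigr)\otimes_{\cb((qA)^\dual)}A
\]
of $A$-bimodules; this is a direct comparison of differentials, using that $\phi$ is a morphism of DG algebras and $\kappa=\phi\circ\iota$, the underlying graded bimodule being $A\otimes(qA)^\dual\otimes A$ on both sides. Since $\cb(C)\otimes_\iota C\otimes_\iota\cb(C)$ is semifree over the enveloping algebra of $\cb(C)$ and, by the previous step, quasi-isomorphic to the diagonal bimodule, this base change computes the derived tensor product $A\otimes^{\mathbf L}_{\cb((qA)^\dual)}\cb((qA)^\dual)\otimes^{\mathbf L}_{\cb((qA)^\dual)}A\simeq A\otimes^{\mathbf L}_{\cb((qA)^\dual)}A$. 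Finally, because $\phi$ is a quasi-isomorphism, $A$ is quasi-isomorphic, as a one-sided module, to the free rank-one module $\cb((qA)^\dual)$, so $A\otimes^{\mathbf L}_{\cb((qA)^\dual)}A\simeq A$; unwinding the identifications shows that the resulting quasi-isomorphism is realized by $\varepsilon$.

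The step I expect to require real care is the acyclicity of the curved bicobar complex, together with the surrounding homological algebra of DG modules over the semifree DG algebra $\cb((qA)^\dual)$: the curved differential on $(qA)^\dual$ is only filtered rather than internally graded, so the convergence of the coradical-filtration spectral sequence genuinely relies on conilpotency, and one must keep close track of the curvature terms to see that all the twisted differentials square to zero. As an alternative that stays inside Section \ref{secQLC}, one may instead filter $A\otimes_\kappa(qA)^\dual\otimes_\kappa A$ by the total weight (weight in $(qA)^\dual$ plus word-length in the two copies of $A$), so that one recovers on the associated graded the quadratic Koszul complex of $qA$ in the sense of Priddy, which is a resolution of $qA$ precisely because $qA$ is Koszul, and then conclude from the resulting convergent spectral sequence.
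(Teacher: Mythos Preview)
The paper's proof is precisely the alternative you sketch in your final paragraph: it filters $A\otimes_\kappa(qA)^\dual\otimes_\kappa A$ by total weight (the weight filtration on $A$ induced from $T(V)$, plus the weight grading on $(qA)^\dual$), uses the PBW theorem (Proposition \ref{PBWThm}) to identify the associated graded with the quadratic two-sided Koszul complex $qA\otimes_\alpha(qA)^\dual\otimes_\alpha qA$, invokes Koszulness of $qA$ to see that this maps quasi-isomorphically to $qA$, and concludes by the comparison theorem for spectral sequences.

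Your main approach via base change along $\phi\colon\cb((qA)^\dual)\to A$ is a genuinely different and correct route. It is more structural, in that it exhibits Theorem \ref{KoszulRes} as a formal consequence of Theorem \ref{KoszulFreeRes} together with the curved bicobar acyclicity and standard derived-tensor-product manipulations. The cost is that it is longer: proving that $\cb(C)\otimes_\iota C\otimes_\iota\cb(C)\to\cb(C)$ is a quasi-isomorphism requires essentially the same filtration/spectral-sequence argument that the paper runs directly on the Koszul complex itself, and then one still has to set up the homological algebra of DG bimodules over $\cb((qA)^\dual)$. The paper's approach is shorter and self-contained (it does not appeal to Theorem \ref{KoszulFreeRes} at all, the two theorems being proved in parallel by the same filtration trick), whereas yours makes the logical relation between the two results transparent and would port more easily to settings where the bicobar acyclicity is already available.
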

As applications of Theorem \ref{IntroKoszulRes}, we investigate the Hochschild homologies of the Weyl algebra and the universal enveloping algebra of a ``unital'' Lie algebra, and recover a result due to Sridharan \cite{Sridharan61}.

In Section \ref{secCycHo}, we turn to the cyclic homology of a QLC algebra. The cyclic (co)homology $\HC(A)$ of an algebra $A$ was introduced independently by Tsygan and Connes. Cyclic cohomology was developed as a noncommutative generalization of the de Rham cohomology \cite{Connes85}, and cyclic homology can be considered as an additive analog of algebraic K-theory \cite{Tsygan83, LQ}. There are two definitions of cyclic homology: one in terms of the standard bicomplex (see, for example, \cite{L}) and another one in terms of the (non-abelian) derived functor in the sense of Quillen's homotopical algebra \cite{Qui67} (see Theorem \ref{FTrHC}). By modifying the cyclic bicomplex, one obtain two variations of cyclic homology:  periodic cyclic homology and negative cyclic homology, denoted by $HC^{\mathrm{per}}$ and $HC^{-}$, respectively. Moreover, those two variations are related to the equivariant cohomology of the free loop space (see, for example, \cite{Good85, Jones87}). Feigin and Tsygan studied the (reduced) cyclic (co)homology of a QL algebra in \cite{FT87}, and they obtained the following long exact sequence provided that $qA$ is Koszul:
\begin{equation}\label{IntroFTHCLongExactSeq}
\cdots\,\longrightarrow\,\rHC_{\mathrm{per}}^{-2-n}((qA)^!)\,\longrightarrow\,\rHC_{n}(A)\,\longrightarrow\,\rHC^{-1-n}((qA)^!)\,\longrightarrow\,\rHC_{\mathrm{per}}^{-1-n}((qA)^!)\,\longrightarrow\,\cdots\,.
\end{equation}
We extend this result to the case of Koszul QLC algebras in Section \ref{secCycHo} and obtain our second main result
\begin{theorem}\label{IntroThmFTHCLongExactSeq}(see Theorem \ref{ThmFTHCLongExactSeq})
Let $A=A(V,\,R)$ be a QLC algebra, where $V$ is a non-negatively graded locally finite graded vector space, and $(qA)^!$ be its Koszul dual curved DG algebra. Suppose $qA$ is Koszul, then one has the long exact sequence \eqref{IntroFTHCLongExactSeq}.
\end{theorem}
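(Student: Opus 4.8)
The plan is to replace the reduced cyclic complex of $A$ by a small complex built directly from the Koszul dual curved coalgebra, and then to pass to graded duals. By Theorem \ref{FTrHC} reduced cyclic homology is a quasi-isomorphism invariant of augmented DG algebras, and Theorem \ref{KoszulFreeRes} gives a quasi-isomorphism $\cb((qA)^\dual)\xrightarrow{\sim}A$ of augmented DG algebras (the Koszulness of $qA$ enters exactly here, and through Theorem \ref{IntroKoszulRes}), so that $\rHC_\bullet(A)\cong\rHC_\bullet(\cb((qA)^\dual))$. Since $\cb((qA)^\dual)$ is free as a graded algebra on the desuspended coaugmentation coideal of $(qA)^\dual$, its reduced cyclic bicomplex is a direct sum of cyclic tensor powers of that space; reorganizing this sum by total cyclic length identifies $\rHC_\bullet(\cb((qA)^\dual))$ with the homology of a small cyclic complex $\rCC_\bullet((qA)^\dual)$ attached directly to the curved DG coalgebra $(qA)^\dual$, whose differential combines the coproduct, the coderivation coming from the linear relations, and — the genuinely new ingredient in the QLC case — a curvature term coming from the constant relations.

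Next I would dualize. The hypothesis that $V$ is non-negatively graded and locally finite ensures that $(qA)^\dual$ is finite dimensional in each bidegree, so graded linear duality is exact; it carries $\rCC_\bullet((qA)^\dual)$ to the cyclic cochain complex of the graded linear dual $(qA)^!$, the Koszul dual curved DG algebra of $A$. Bookkeeping the three sources of degree shift — the desuspension built into the cobar construction, the sign reversal of dualization, and the shift in the $B$-direction — accounts for the index reversal $n\mapsto -1-n$ appearing in \eqref{IntroFTHCLongExactSeq}. At this stage $\rHC_n(A)$ has been identified with a cohomology group of an explicit cochain complex $D^\bullet$ attached to $(qA)^!$: the Connes-type cyclic cochain complex of the curved DG algebra $(qA)^!$, in which the differential must incorporate the curvature $\theta\in((qA)^!)^2$ acting by multiplication, since $b^2=0$ holds only after this correction.

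Finally I would extract the long exact sequence. Because $\theta$ is a degree-two cocycle acting by left multiplication, the complex $D^\bullet$ carries a $\theta$-adic (equivalently $u$-adic) filtration relating it, on the one hand, to the reduced ordinary cyclic cochain complex of $(qA)^!$ (the curvature-free part) and, after suitable completion, to the reduced periodic cyclic cochain complex $\rHC_{\mathrm{per}}^\bullet((qA)^!)$; concretely this produces a short exact sequence of complexes whose middle term is $D^\bullet$ and whose outer terms are shifts of the ordinary and the periodic cyclic cochain complexes of $(qA)^!$. Passing to cohomology and matching the three terms against $\rHC_n(A)$, $\rHC^{-1-n}((qA)^!)$, and $\rHC_{\mathrm{per}}^{-1-n}((qA)^!)$, with connecting homomorphism the canonical comparison map $\rHC^\bullet((qA)^!)\to\rHC_{\mathrm{per}}^\bullet((qA)^!)$, gives exactly \eqref{IntroFTHCLongExactSeq}; when $A$ is a QL algebra one has $\theta=0$ and this recovers the Feigin--Tsygan sequence of \cite{FT87}.

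I expect the main obstacle to be the bookkeeping forced by the curvature. First, one must make precise, and check well-defined, the notion of the cyclic complex of a curved DG coalgebra (and dually the cyclic cochain complex of a curved DG algebra): the naive Connes differential does not square to zero, so one has to identify the correct curvature-correction terms — most cleanly by working $u$-linearly throughout — and verify $b^2=0$ on the nose. Second, the bar--cobar reorganization of the first step and the dualization of the second require the local finiteness hypothesis both to keep $(qA)^!$ a bona fide (complete) curved DG algebra with convergent products and to make the periodic complex converge, and one must confirm quasi-isomorphism invariance of reduced cyclic homology in this augmented, possibly unbounded setting. Third — and this is the delicate point for obtaining \eqref{IntroFTHCLongExactSeq} on the nose rather than up to an unwanted suspension — one has to pin down all the shifts in the short exact sequence of complexes and check that the connecting map is precisely the standard comparison map; the degree conventions coming from the cobar desuspension, the internal grading on $V$, and the cyclic $B$-operator all have to be tracked consistently.
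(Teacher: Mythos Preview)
Your first three steps are broadly in line with the paper, though the paper makes them precise via a specific intermediate object that you do not name. The paper takes $R=\cb((qA)^\dual)$ and uses Quillen's bicomplex $X^+(R)$ built from $\bar R$ and $\Omega^1 R_\n$ (noncommutative $1$-forms modulo commutators), together with the identification $\H_n[\Tot X^+(R)]\cong\rHC_n(A)$ from \cite{BKR}. The key computation is then that, under the finiteness hypothesis, $X^+(R)$ is isomorphic as a bicomplex to $\overline{CC}^\ast_{-}((qA)^!)$, the reduced \emph{negative} cyclic cochain bicomplex of the curved DG algebra $(qA)^!$. This yields the single isomorphism
\[
\rHC_n(A)\,\cong\,\rHC_{-}^{-n}((qA)^!)\,,
\]
and the long exact sequence \eqref{IntroFTHCLongExactSeq} is then nothing but the general long exact sequence \eqref{IntrocoHCLongExactSeq} relating $\rHC_-^\bullet$, $\rHC_{\mathrm{per}}^\bullet$, and $\rHC^\bullet$ for an arbitrary augmented curved DG algebra, which comes from the obvious truncation short exact sequence of bicomplexes. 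So your complex $D^\bullet$ should be identified specifically as the negative cyclic cochain complex, and once that is done there is nothing left to do.

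Your fourth step, by contrast, contains a genuine confusion. You propose to extract the long exact sequence from a ``$\theta$-adic (equivalently $u$-adic) filtration,'' but the curvature $\Theta\in(qA)^!$ and the cyclic periodicity parameter $u$ are unrelated: $\Theta$ enters only through the $d_0$ term of the bar differential \eqref{BarD0} and sits inside each column of the cyclic bicomplex, while $u$ indexes the horizontal direction (the alternation of $1-T$ and $N$). There is no $\Theta$-adic filtration producing the short exact sequence you want; the relevant short exact sequence is the column-truncation $0\to\overline{CC}^\ast_-\to\overline{CC}^\ast_{\mathrm{per}}\to\overline{CC}^\ast[2,0]\to 0$, which is available for any curved DG algebra and has nothing to do with the curvature. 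Your closing remark that ``when $A$ is QL one has $\theta=0$ and this recovers the Feigin--Tsygan sequence'' reflects the same misconception: the sequence \eqref{IntroFTHCLongExactSeq} has exactly the same origin and form whether or not $\Theta$ vanishes; the curvature affects only the vertical differentials, not the mechanism producing the long exact sequence. A minor additional point: in the genuine QLC case $A$ and $\cb((qA)^\dual)$ are only semi-augmented, not augmented, so your appeal to ``augmented DG algebras'' in step one needs adjusting.
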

The first part of Section \ref{secCycHo} is devoted to the constructions of cyclic (co)homology of curved DG algebras. Our approach is extending the cyclic bicomplex to curved DG algebras instead of the mixed complex as, for example, in \cite{BW20}. There is a long exact sequence connecting the three cyclic theories of a curved DG algebra $\widetilde{A}$:
\begin{equation}\label{IntrocoHCLongExactSeq}
\cdots\,\longrightarrow\,\rHC_{\mathrm{per}}^{-2-n}(\widetilde{A})\,\longrightarrow\,\rHC_{-}^{-n}(\widetilde{A})\,\longrightarrow\,\rHC^{-1-n}(\widetilde{A})\,\longrightarrow\,\rHC_{\mathrm{per}}^{-1-n}(\widetilde{A})\,\longrightarrow\,\cdots\,.
\end{equation}
In Section \ref{subsecNCDiff}, we review the bimodule $\Omega^1R$ of noncommutative differential 1-forms of a DG algebra $R$. In particular, there is a periodic complex
\begin{equation}\label{IntroXPlusComplex}
X^+(R)\defeq[0 \longleftarrow \bar{R} \xleftarrow{\ \beta\ } \Omega^{1}R_{\n} \xleftarrow{\ \bar{\partial}\ } \bar{R} \xleftarrow{\ \beta\ }  \Omega^{1}R_{\n} \xleftarrow{\ \bar{\partial}\ } \cdots]\,,
\end{equation}
where $\bar{R}\defeq R/k$ and $\Omega^{1}R_{\n}$ is the commutator quotient space of $\Omega^1R$. Let $R=\cb((qA)^\dual)$ be the semi-free resolution of $A$ constructed in Section \ref{subsecCobarKDCCA}. It is shown in \cite{BKR} that the homology of the total complex of $X^+(R)$ is isomorphic to the reduced cyclic homology of $A$. On the other hand, we show in Section \ref{subsecFTHCLongExactSeq} that $X^+(R)$ is isomorphic to the reduce negative cyclic cochain bicomplex of $(qA)^!$. Thus, one has the following isomorphism
\begin{eqnarray}\label{IntroHCiso}
\rHC_n(A)\,\cong\,\rHC_{-}^{-n}((qA)^!)\,.
\end{eqnarray}
Theorem \ref{IntroFTHCLongExactSeq} follows from the long exact sequence \eqref{IntrocoHCLongExactSeq} and the isomorphism \eqref{IntroHCiso}.

In Section \ref{secComQLC}, we study the Koszul duality theory for commutative QLC algebras which can be viewed as a commutative/Lie analog to Section \ref{secQLC}. A commutative QLC algebra $A$ is of the form $\Sym(V)/(R)$ where $V$ is a vector space and the ideal $(R)$ is generated by a subspace $R \subset k\oplus V\oplus \Sym^2(V)$. Let $q\,:\,\Sym(V)\twoheadrightarrow  \Sym^2(V)$ be the projection onto the quadratic part of symmetric algebra, and $qR$ be the image of $R$ under $q$. Then $qA\defeq\overline{\Sym(V)}/(qR)$, where $\overline{\Sym(V)}\defeq \Sym(V)/k$, is a nonunital commutative quadratic algebra. The Koszul duality theory for quadratic algebras over binary quadratic operads was introduced by Ginzburg and Kapranov in \cite{GK94}. It was generalized to monogenic algebras over quadratic operads by Mill\`{e}s in \cite{JBMilles12}. In our case, the Koszul dual coalgebra $(qA)^\dual$ of $qA$ is a curved DG Lie coalgebra. For a DG Lie coalgebra $\mfG$, the {\it Lie cobar construction} $\cb_{\mathtt{Lie}}(\mfG)$ is linearly dual to the Lie bar construction produced by Quillen in \cite{Qui69}. Moreover, this construction can be extended to the category of curved DG Lie coalgebras. We showed that if $qA$ is Koszul in the sense of \cite{JBMilles12}, then $\cb_{\mathtt{Lie}}((qA)^\dual)$ is a semi-free resolution of $A$ in the category of commutative DG algebras (see Theorem \ref{CKoszulFreeRes}). It is clear that every commutative QLC algebra can be viewed as an (associative) QLC algebra. Hence, there are two coalgebras associated to $A$: the Koszul dual Lie coalgebra $(qA)^\dual$, and the Koszul dual (coassociative) coalgebra (of $A$ as an associative algebra), denoted by $(qA^+)^\dual$. Our final result is the following theorem explaining the relation between $(qA)^\dual$ and $(qA^+)^\dual$
\begin{theorem}\label{IntroThmKDLiecoalgKDcoalg}(see Theorem \ref{ThmKDLiecoalgKDcoalg})
There is a natural isomorphism
$$\U^c((qA)^\dual)\,\cong\,(qA^+)^\dual\,,$$
where $\U^c(\mbox{--})$ denotes universal coenveloping coalgebra functor from the category of graded Lie coalgebras to the category of graded coassociative coalgebras.
\end{theorem}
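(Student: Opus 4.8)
The plan is to realize both sides as subcoalgebras of the tensor coalgebra $T^{c}(sV)$ and to identify them directly from the quadratic datum. Since $A^{+}$ is $A$ regarded as an associative algebra, its space of quadratic relations is $qR^{+}=\pi^{-1}(qR)$, where $\pi\colon V^{\otimes 2}\twoheadrightarrow\Sym^{2}(V)$ is the symmetrization; explicitly $qR^{+}=\Lambda^{2}(V)\oplus\widetilde{qR}$, with $\widetilde{qR}$ the symmetric lift of $qR$. By Priddy's construction $(qA^{+})^{\dual}$ is the subcoalgebra of $T^{c}(sV)$ with weight-$n$ component $\bigcap_{i+j=n-2}(sV)^{\otimes i}\otimes s^{2}qR^{+}\otimes(sV)^{\otimes j}$, and (following Mill\`{e}s) $(qA)^{\dual}$ is the sub-Lie-coalgebra of the conilpotent cofree Lie coalgebra $\mathcal{L}^{c}(sV)$ cut out in each weight by the analogous intersection built from $s^{2}qR\subset\mathcal{L}^{c}(sV)_{2}=\Lambda^{2}(sV)$. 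The suspension interchanges symmetry, so under $(sV)^{\otimes 2}=\Sym^{2}(sV)\oplus\Lambda^{2}(sV)$ one has $s^{2}qR^{+}=\Sym^{2}(sV)\oplus s^{2}qR$, and the antisymmetrization $\widetilde{\pi}\colon(sV)^{\otimes 2}\twoheadrightarrow\Lambda^{2}(sV)$ has kernel $\Sym^{2}(sV)$ and satisfies $\widetilde{\pi}\bigl(s^{2}qR^{+}\bigr)=s^{2}qR$.

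Next I would pin down $\U^{c}$ on these coalgebras. Write $L\colon\mathrm{coAss}\to\mathrm{coLie}$ for the antisymmetrization functor, so that $\U^{c}$ is right adjoint to $L$ and hence preserves monomorphisms; and recall that Poincar\'{e}--Birkhoff--Witt dualizes (using local finiteness of $V$, by passing to graded duals of $\U(\mathrm{FreeLie}(W))\cong T(W)$) to a natural coalgebra isomorphism $\U^{c}(\mathcal{L}^{c}(W))\cong T^{c}(W)$. Combining these, for a sub-Lie-coalgebra $\mathfrak{G}\hookrightarrow\mathcal{L}^{c}(sV)$ one obtains a monomorphism $\U^{c}(\mathfrak{G})\hookrightarrow\U^{c}(\mathcal{L}^{c}(sV))\cong T^{c}(sV)$, and chasing the adjunction bijection identifies its image with the largest graded subcoalgebra $D\subseteq T^{c}(sV)$ such that $\varepsilon(L(D))\subseteq\mathfrak{G}$, where $\varepsilon\colon L(T^{c}(sV))\to\mathcal{L}^{c}(sV)$ is the counit (which in weight $1$ is the identity of $sV$ and in weight $2$ is $\widetilde{\pi}$). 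Taking $\mathfrak{G}=(qA)^{\dual}$ reduces the theorem to computing this maximal subcoalgebra.

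To finish, I would observe that $\varepsilon(L(D))$ is again a sub-Lie-coalgebra of $\mathcal{L}^{c}(sV)$ and that $(qA)^{\dual}$ is the maximal sub-Lie-coalgebra whose weight-$2$ part lies in $s^{2}qR$; hence $\varepsilon(L(D))\subseteq(qA)^{\dual}$ is equivalent to the weight-$2$ condition $\widetilde{\pi}(D_{2})\subseteq s^{2}qR$, i.e.\ to $D_{2}\subseteq\widetilde{\pi}^{-1}(s^{2}qR)=\Sym^{2}(sV)\oplus s^{2}qR=s^{2}qR^{+}$. The largest such $D$ therefore has $D_{0}=k$, $D_{1}=sV$, $D_{2}=s^{2}qR^{+}$, and, being a subcoalgebra of $T^{c}(sV)$, satisfies $D_{n}=\bigcap_{i+j=n-2}(sV)^{\otimes i}\otimes s^{2}qR^{+}\otimes(sV)^{\otimes j}=(qA^{+})^{\dual}_{n}$ for all $n$. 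This yields the graded isomorphism $\U^{c}((qA)^{\dual})\cong(qA^{+})^{\dual}$; its naturality in the QLC datum $(V,R)$, and its compatibility with the differentials and curvatures, are automatic since every ingredient is functorial and the extra structure on both sides is induced by the linear and constant parts of $R$.

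The routine parts are the two intersection presentations and the bookkeeping of suspensions and Koszul signs needed to see that $\widetilde{\pi}$ realizes the counit and carries $s^{2}qR^{+}$ onto $(qA)^{\dual}_{2}$. The genuinely delicate point is the handling of the conilpotent cofree Lie coalgebra $\mathcal{L}^{c}(sV)$ and of the isomorphism $\U^{c}(\mathcal{L}^{c}(sV))\cong T^{c}(sV)$ in the (possibly infinite-dimensional) setting --- this is where local finiteness of $V$ enters, allowing PBW and the adjunction identifications to be transported to graded duals. Conceptually, the whole statement is the concrete shadow of the identities $\mathrm{Com}^{!}=\mathrm{Lie}$ and $\mathrm{Ass}^{!}=\mathrm{Ass}$ together with PBW: Koszul duality intertwines the restriction of a commutative algebra to an associative algebra (along $\mathrm{Ass}\twoheadrightarrow\mathrm{Com}$) with coinduction along the dual inclusion $\mathrm{Lie}\hookrightarrow\mathrm{Ass}$, that is, with $\U^{c}$; after dualizing (in the locally finite case) this reduces, up to the usual suspensions, to the algebra-level identity $\U\bigl((qA)^{!}\bigr)\cong(qA^{+})^{!}$.
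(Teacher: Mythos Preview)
Your argument is correct and follows essentially the same route as the paper's proof. Both proofs rest on the same three ingredients: the universal characterizations of $(qA)^{\dual}$ and $(qA^{+})^{\dual}$ as the maximal sub-(Lie-)coalgebras of $L^{c}(sV)$ and $T^{c}(sV)$ cut out by a weight-$2$ condition; the identification $\U^{c}(L^{c}(sV))\cong T^{c}(sV)$; and the fact that the counit $\pi\colon \mathcal{Lie}^{c}(T^{c}(sV))\to L^{c}(sV)$ is, in weight $2$, the antisymmetrization $(sV)^{\otimes 2}\twoheadrightarrow\wedge^{2}(sV)$. The paper verifies the universal property of $\U^{c}((qA)^{\dual})$ directly---given $f\colon\mathcal{Lie}^{c}(C)\to(qA)^{\dual}$ it manufactures the lift $h\colon C\to(qA^{+})^{\dual}$ by checking that $(g\otimes g)\circ\Delta(c)\in s^{2}qR\oplus s^{2}[V,V]$---whereas you package the same computation as ``$\U^{c}$ preserves monos, so $\U^{c}((qA)^{\dual})\hookrightarrow T^{c}(sV)$ is the largest $D$ with $\widetilde{\pi}(D_{2})\subseteq s^{2}qR$.'' These are two phrasings of one argument; the weight-$2$ calculation is identical.

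One small point: you invoke local finiteness of $V$ to obtain $\U^{c}(L^{c}(W))\cong T^{c}(W)$ by dualizing PBW. The paper (following Michaelis) uses this isomorphism as a known fact valid for any graded $W$ over a field of characteristic zero, without finiteness hypotheses; so the dualization detour is unnecessary and the finiteness restriction can be dropped. Your closing remark that the statement is the shadow of $\mathrm{Com}^{!}=\mathrm{Lie}$, $\mathrm{Ass}^{!}=\mathrm{Ass}$ together with PBW---i.e., Koszul duality exchanges restriction along $\mathrm{Ass}\twoheadrightarrow\mathrm{Com}$ with coinduction along $\mathrm{Lie}\hookrightarrow\mathrm{Ass}$---is a nice conceptual framing that the paper does not make explicit.
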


\subsection*{Acknowledgements} 
The author is grateful to Alexander Gorokhovsky for introducing the work of Feigin and Tsygan \cite{FT87} and reading through the preliminary version of this paper. The author would like to thank Ajay C. Ramadoss for interesting discussions and comments.

\section{Koszul duality for curved DG (co)algebras}\label{secPreliminary}
In this section, we recall the notion of curved DG (co)algebras, their (co)bar constructions, and curved twisiting morphisms. We refer to the book of Loday and Vallette \cite{LV} for a complete exposition about Koszul duality theory for (uncurved) DG (co)algebras, and the monograph of Positselski \cite{Pos11} for a detailed treatment on curved DG (co)algebras and their (co)modules. See \cite{HM12} for generalizations to operads and properads, and see \cite{Lyuba13} for generalizations to curved $A_\infty$ (co)algebras.
\subsection{Notations and conventions}
Throughout this paper, $k$ denotes a field of characteristic zero. Let $V$ be a graded $k$-vector space, the {\it suspension} of $V$ is the graded vector space $sV$ such that $(sV)_{i}=V_{i-1}$. Similarly, the {\it desuspension} of $V$ is the graded vector space $s^{-1}V$ such that $(s^{-1}V)_{i}=V_{i+1}$. Notice that we will use $V[1]$ (resp., $V[-1]$) to denote $sV$(resp., $s^{-1}V$) as well. Let $V$ and $W$ be two graded $k$-vector spaces, the space of $k$-morphisms of degree zero is denoted by $\Hom_{k}(V,\,W)$. The graded vector space of $k$-morphisms of any degree is denoted by $\bHom_{k}(V,\,W)$, and one has
$$\bHom_{k}(V,\,W)\,=\,\bigoplus_{p}\Hom_{k}(V,\,W[-p])\,=\,\bigoplus_{p}\big(\prod_{i} \Hom_{k}(V_{i},\,W_{i+p})\big)\,.$$
The graded linear dual $V^\ast$ of a graded vector space $V$ is $\bHom_{k}(V,\,k)$, i.e.,
$$V^\ast\,=\,\bigoplus_{p}\Hom_{k}(V_{-p},\,k)\,.$$

An algebra means an associative unital algebra over $k$. The associative multiplication is denoted by $\mu\,:\,A\otimes A \,\rightarrow\, A$, and the unit is denoted by $u\,:\,k\,\rightarrow\, A$. We denote by $1_{A}$, or simply by $1$, the image of $1_{k}$ in $A$ under $u$. An algebra is {\it augmented} if it is given together with a morphism of algebras $\epsilon\,:\,A\,\rightarrow\,k$, called the {\it augmentation map}. In particular, $\epsilon(1_{A})=1_{k}$. If $A$ is augmented, one has $A=k\oplus \bar{A}$, where $\bar{A}=\ker(\epsilon)$ called the {\it augmentation ideal} of $A$.

Unless stated otherwise, all {\it differential graded (DG)} objects are equipped with differentials of degree $-1$. Given a DG algebra $A$, let $A^{op}$ denote its opposite DG algebra. The {\it enveloping algebra} of $A$ is $A^{e}\defeq A\otimes A^{op}$. The category $\DGBimod(A)$ of $A$-DG bimodules is equivalent to the category $\DGMod$-$A^{e}$ of right $A^{e}$-DG modules as well as the category $A^{e}$-$\DGMod$ of left $A^{e}$-DG modules. 

A coalgebra means a coassociative counital coalgebra over $k$. The coassociative comultiplication is denoted by $\Delta\,:\,C\,\rightarrow\,C\otimes C$, and the counit is denoted by $\epsilon\,:\,C\,\rightarrow\,k$. A coalgebra is {\it coaugmented} if it is given together with a morphism of coalgebras $\eta\,:\,k\,\rightarrow\, C$, called the {\it coaugmentation map}, such that $\epsilon \circ \eta=\id_{k}$. If $C$ is coaugmented, then $C=k\oplus\bar{C}$ where $\bar{C}$ is the cokernel of $\eta$. We shall often work with coaugmented coassociative counital DG coalgebras $C$ which are {\it conilpotent} in the sense that
\begin{equation*}
\bar{C}\,=\,\bigcup_{n\geqslant 2}\ker[C\xrightarrow{\Delta^{(n)}}C^{n}\twoheadrightarrow \bar{C}^{n}]\,,
\end{equation*}
where $ \Delta^{(n)} $ denotes the $n$-th iteration of the comultiplication map $\Delta$. In particular, we shall call $C\xrightarrow{\Delta} C^{2}\twoheadrightarrow \bar{C}^{2}$ the {\it reduced comultiplication}, and denote it by $\bar{\Delta}$. Notice that $\bar{C}$ is a coassociative non-counital coalgebra with comultiplication $\bar{\Delta}$.

\subsection{Curved DG (co)algebra}
A {\it curved DG algebra} over $k$ is a $k$-graded algebra $A$ endowed with a derivation $\triangledown$ of degree $-1$ and an element $\Theta$ of degree $-2$ satisfying
\begin{enumerate}
	\item $\triangledown^2\,=\, [\Theta,\,\mbox{--}]$
	\item $\triangledown(\Theta)\,=\,0$
\end{enumerate}

A {\it curved DG coalgebra} over $k$ is a $k$-graded coalgebra $C$ endowed with a coderivation $d$ of degree $-1$ and a linear map $h\,:\,C\rightarrow k$ of degree $-2$ satisfying
\begin{enumerate}
	\item $d^2\,=\, (h\otimes \id- \id\otimes h)\circ\Delta$
	\item $h\circ d\,=\,0$
\end{enumerate}

\begin{proposition}\label{CurvedDGAC}
Let $(A,\,\triangledown,\,\Theta)$ be a non-positively graded, locally finite curved DG algebra over $k$, then its graded linear dual $C\defeq A^\ast$ is a curved DG coalgebra.
\end{proposition}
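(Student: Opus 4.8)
The plan is to produce every piece of structure on $C\defeq A^{\ast}$ by $k$-linear transposition of the corresponding structure on $A$, and then to note that each axiom of a curved DG coalgebra is literally the transpose of an axiom of a curved DG algebra. The only substantive point is that the standing hypotheses ``non-positively graded'' and ``locally finite'' are exactly what let transposition commute with the tensor product.

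First I would record this consequence of the hypotheses. Since $A=\bigoplus_{i\le 0}A_{i}$ with each $A_{i}$ finite dimensional, for every $n\le 0$ the space $(A\otimes A)_{n}=\bigoplus_{i+j=n}A_{i}\otimes A_{j}$ is a \emph{finite} sum of finite-dimensional spaces; hence $A\otimes A$ (and likewise $A^{\otimes 3}$) is again non-positively graded and locally finite, and the canonical maps $A^{\ast}\otimes A^{\ast}\to (A\otimes A)^{\ast}$ and $(A^{\ast})^{\otimes 3}\to (A^{\otimes 3})^{\ast}$ are degree-wise isomorphisms (in degree $p$ a finite sum of the isomorphisms $A_{i}^{\ast}\otimes A_{j}^{\ast}\xrightarrow{\ \sim\ }(A_{i}\otimes A_{j})^{\ast}$). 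This is precisely what fails for a general locally finite algebra, and what the boundedness of the grading buys. I then define the comultiplication $\Delta\defeq\bigl(A^{\ast}\xrightarrow{\ \mu^{\ast}\ }(A\otimes A)^{\ast}\xrightarrow{\ \sim\ }A^{\ast}\otimes A^{\ast}\bigr)$, the counit $\epsilon\defeq u^{\ast}\colon A^{\ast}\to k^{\ast}=k$, the coderivation $d\defeq\triangledown^{\ast}$, and the curvature $h\defeq\vartheta^{\ast}\colon A^{\ast}\to k^{\ast}=k$, where $\vartheta\colon k\to A$ is the degree $-2$ map sending $1$ to $\Theta$ (possibly up to an overall sign, discussed below). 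Since transposition preserves the degree of a map, $\Delta$ has degree $0$, $\epsilon$ degree $0$, $d$ degree $-1$ and $h$ degree $-2$, as required.

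It then remains to transpose the identities. Coassociativity of $\Delta$ and the counit law for $\epsilon$ are the images under $(-)^{\ast}$ of the associativity of $\mu$ and the unit law for $u$ (invoking $(A^{\otimes 3})^{\ast}\cong(A^{\ast})^{\otimes 3}$); the co-Leibniz rule $\Delta\circ d=(d\otimes\id+\id\otimes d)\circ\Delta$ is the transpose of the graded Leibniz rule $\triangledown\circ\mu=\mu\circ(\triangledown\otimes\id+\id\otimes\triangledown)$. For the curvature, write the inner derivation as $[\Theta,-]=\mu\circ(\vartheta\otimes\id)-\mu\circ(\id\otimes\vartheta)$ (no Koszul sign appears, as $\Theta$ has even degree); transposing gives $\bigl([\Theta,-]\bigr)^{\ast}=(h\otimes\id)\circ\Delta-(\id\otimes h)\circ\Delta$, and comparing with $(\triangledown^{2})^{\ast}$ this is exactly $d^{2}=(h\otimes\id-\id\otimes h)\circ\Delta$. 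Finally $\triangledown(\Theta)=0$, i.e.\ $\triangledown\circ\vartheta=0$, transposes to $h\circ d=0$.

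The only genuine care needed is bookkeeping of the Koszul sign rule $(g\circ f)^{\ast}=(-1)^{|f|\,|g|}f^{\ast}\circ g^{\ast}$: it controls the exact signs in the co-Leibniz rule and in $d^{2}=(h\otimes\id-\id\otimes h)\circ\Delta$, and it fixes the normalization of $h$ — one chooses the sign of $\vartheta^{\ast}$ (equivalently of $d$) so that the axioms come out with the stated signs. Apart from this sign bookkeeping and the finiteness observation isolated at the start, I expect no obstacle: the whole proof is a short string of diagram transpositions.
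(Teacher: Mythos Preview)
Your proof is correct and follows essentially the same approach as the paper's: both dualize the multiplication, derivation, and curvature, and verify the curved DG coalgebra axioms by transposition. The paper's argument is terser---it computes $(d^{2})(f)$ and $h(d(f))$ directly on elements and resolves the sign ambiguity you flag by taking $h=-\Theta$ (evaluation at $-\Theta$)---but the content is the same.
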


\begin{proof}
By the assumptions, $C\defeq A^\ast$ is a graded coalgebra over $k$. And it is easy to see that $d\defeq \triangledown^\ast$ is a coderivation of degree $-1$ on $C$. Let $h=-\Theta$ viewed as a linear functional of degree $-2$ on $C$.
Therefore, for any $f\in C$, 
$$(d^2)(f)\,=\,-f\big(\triangledown^2(\mbox{--})\big)\,=\,f\big([-\Theta,\,\mbox{--}]\big)\,=\,(h\otimes \id- \id\otimes h)\Delta(f)\,,$$
and,
$$h\big(d(f)\big)\,=\,(-1)^{|f|}f\big(\triangledown(\Theta)\big)\,=\,0\,.$$
\end{proof}

\begin{remark}
The converse of the previous proposition is also true. Namely, the graded linear dual of a non-negatively graded locally finite curved DG coalgebra is a curved DG algebra.
\end{remark}

A {\it strict morphism} $f$ between two curved DG algebras $(A,\,\triangledown,\,\Theta)$ and $(A',\,\triangledown',\,\Theta')$ is a morphism of graded algebra $f\,:\,A\rightarrow A'$ such that $\triangledown'\circ f=f\circ \triangledown$ and $\Theta'=f(\Theta)$. Let $\CDGA_k$ denote the category of curved DG algebras over $k$ whose morphisms are strict morphisms between curved DG algebras. The category of DG algebras $\DGA_k$ is a full category of  $\CDGA_k$.

A {\it strict morphism} $f$ between two curved DG coalgebras $(C,\,d,\,h)$ and $(C',\,d',\,h')$ is a morphism of graded coalgebra $f\,:\,C\rightarrow C'$ such that $d'\circ f=f\circ d$ and $h'\circ f=h$. Let $\CDGC_k$ denote the category whose objects are curved DG coalgebras over $k$ such that their underlying graded coalgebras are conilpotent, and morphisms are strict morphisms between curved DG coalgebras. 

\begin{remark}
There is a more general notion of morphisms between curved DG (co)algebras, see \cite{Pos11} for the details.
\end{remark}

\subsection{Cobar constructions of curved DG coalgebras}\label{subsecCobar}
Recall from \cite{Pos11} that a  curved DG coalgebra $(C,\,d,\,h)$ is called {\it coaugmented} if it is coaugmented as a graded coalgebra, and the coaugmentation map $\eta\,:\,k\rightarrow C$ satisfies $d\circ \eta=0$ and $h\circ \eta=0$. Notice that if $C$ is non-negatively graded, then these two conditions are automatically satisfied. In addition, it can be shown that $d$ and $h$ induce the structure of a curved DG coalgebra on $(\bar{C},\,\bar{\Delta})$.

The {\it cobar construction} $\cb(C)$ of a coaugmented curved DG coalgebra $(C,\,d,\,h)$ is defined as the graded tensor algebra $T(\bar{C}[-1])$ equipped with three derivations $d_{0}$, $d_{1}$, and $d_{2}$, where $d_{0}$ (resp., $d_{1}$) is induced by $h$ (resp., $d$) on $\bar{C}$ and $d_{2}|_{\bar{C}[-1]}$ is given by the composite map
$$\bar{C}[-1] \cong k[-1] \otimes \bar{C}\xrightarrow{\Delta_{-1} \otimes \bar{\Delta}} k[-1] \otimes k[-1] \otimes \bar{C}\otimes \bar{C}\cong \bar{C}[-1]\otimes \bar{C}[-1]\,.$$
Here, $\Delta_{-1}\,:\,k[-1] \rightarrow k[-1] \otimes k[-1]$ is the map of degree $-1$ taking $1_{k[-1]}$ to $-1_{k[-1]} \otimes 1_{k[-1]}$. More precisely, given $(s^{-1}c_{1},\,\cdots,\,s^{-1}c_{n})\in \cb(C)$, 
\begin{equation}\label{CobarD0}
d_{0}(s^{-1}c_{1},\,\cdots,\,s^{-1}c_{n})\,=\,\sum_{i=1}^{ n}(-1)^{|c_{1}|+\cdots+|c_{i-1}|+i-1}h(c_{i})(s^{-1}c_{1},\,\cdots,\, s^{-1}c_{i-1},\,s^{-1}c_{i+1},\,\cdots,\,s^{-1}c_{n})\,,
\end{equation}
\begin{equation}\label{CobarD1}
d_{1}(s^{-1}c_{1},\,\cdots,\,s^{-1}c_{n})\,=\,\sum_{i=1}^{ n}(-1)^{|c_{1}|+\cdots+|c_{i-1}|+i}(s^{-1}c_{1},\,\cdots,\,s^{-1}dc_{i},\,\cdots,\,s^{-1}c_{n})\,,
\end{equation}
and
\begin{equation}\label{CobarD2}
d_{2}(s^{-1}c_{1},\,\cdots,\,s^{-1}c_{n})\,=\,\sum_{i=1}^{n}(-1)^{|c_{1}|+\cdots+|c_{i-1}|+|c_{i}'|+i}(s^{-1}c_{1},\,\cdots,\,s^{-1}c_{i}^{\prime},\,s^{-1}c_{i}^{\prime\prime},\,\cdots,\,s^{-1}c_{n})\,.
\end{equation}
where $\bar{\Delta}(c_{i})=c_{i}'\otimes c_{i}''$ in the Sweedler notation.

\begin{proposition}\label{CobarDGA}
Given a coaugmented curved DG coalgebra $(C,\,d,\,h)$, the cobar construction $\cb(C)$ is a DG algebra with differential given by $d_0+d_1+d_2$.
\end{proposition}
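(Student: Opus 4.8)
The plan is to proceed in two steps: first establish that $D\defeq d_0+d_1+d_2$ is a well-defined derivation of degree $-1$ on $\cb(C)$, and then verify $D^2=0$. For the first step one uses that $\cb(C)=T(\bar C[-1])$ is a \emph{free} graded algebra, so a derivation is uniquely determined by its restriction to the generators $\bar C[-1]$ and, conversely, any linear map $\bar C[-1]\to T(\bar C[-1])$ extends uniquely to a derivation. Each of the three defining maps on generators — $d_0|_{\bar C[-1]}$ valued in $k=T^0$, $d_1|_{\bar C[-1]}$ valued in $\bar C[-1]=T^1$, and $d_2|_{\bar C[-1]}$ valued in $(\bar C[-1])^{\otimes 2}=T^2$ — is homogeneous of degree $-1$: for $d_0$ this uses that $h$ is supported in internal degree $2$, so $h(c)\neq 0$ forces $|s^{-1}c|=1$; for $d_1$ it is immediate; and for $d_2$ the degree $-1$ is contributed by the factor $\Delta_{-1}$. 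Hence $D$ is a derivation of degree $-1$.

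For the second step, since $D$ is an odd derivation, $D^2=\tfrac12[D,D]$ is again a derivation, now of even degree $-2$, and is therefore determined by its values on the generators; so it suffices to show $D^2(s^{-1}c)=0$ for every $c\in\bar C$. Expand $D^2=d_0^2+d_1^2+d_2^2+(d_0d_1+d_1d_0)+(d_0d_2+d_2d_0)+(d_1d_2+d_2d_1)$. On a generator $d_0$ lowers the word length by one, $d_1$ preserves it, and $d_2$ raises it by one, so on $s^{-1}c$ the six summands distribute into the pieces $T^0,T^1,T^2,T^3$ of $\cb(C)$ by word length, and it is enough to check that each piece vanishes. Granting the fact, recalled just above, that $d$ and $h$ induce a curved DG coalgebra structure on $(\bar C,\bar\Delta)$, these vanishings are exactly the curved coalgebra axioms transported through the cobar dictionary: the $T^3$-component is $d_2^2(s^{-1}c)$ and vanishes by coassociativity of $\bar\Delta$; the $T^2$-component is $(d_1d_2+d_2d_1)(s^{-1}c)$ and vanishes because $d$ is a coderivation for $\bar\Delta$; in the $T^1$-component the term $d_2d_0(s^{-1}c)$ vanishes since $d_0(s^{-1}c)\in k\cdot 1$ and the derivation $d_2$ kills the unit, leaving $d_1^2(s^{-1}c)+d_0d_2(s^{-1}c)$, which vanishes by the curvature identity $d^2=(h\otimes\id-\id\otimes h)\circ\bar\Delta$; in the $T^0$-component $d_1d_0(s^{-1}c)=0$ likewise, while $d_0d_1(s^{-1}c)=\pm h(dc)\cdot 1=0$ by the relation $h\circ d=0$; and $d_0^2(s^{-1}c)=0$ trivially.

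The only genuine obstacle is the sign bookkeeping: one must verify that the Koszul signs produced by the desuspension $s^{-1}$ and by the degree $-1$ map $\Delta_{-1}$ — encoded in the explicit signs in \eqref{CobarD0}, \eqref{CobarD1} and \eqref{CobarD2} — make each of the three cancellations above exact rather than merely up to sign. I would handle this by fixing the Koszul sign convention once and for all and computing each composite $d_id_j$ on a generator directly; this is the standard, if tedious, suspension-sign calculation, and could be isolated in an auxiliary sign lemma. As a conceptual guide and sanity check, in the locally finite case $\cb(C)$ is the graded dual of the bar construction of the curved DG algebra $C^\ast$, so the statement is dual to the known fact that the bar construction of a curved DG algebra is a curved DG coalgebra, and the sign conventions in the definition of $\cb(C)$ are precisely those that realize this duality on the nose.
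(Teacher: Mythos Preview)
Your proof is correct and follows essentially the same route as the paper's: reduce $D^2=0$ to generators via the derivation property $D^2=\tfrac12[D,D]$, then verify the vanishing of each $d_id_j$ combination on $s^{-1}c$ using coassociativity, the coderivation property, the curvature identity, and $h\circ d=0$. Your added discussion of why $D$ is a degree $-1$ derivation and your explicit word-length bookkeeping are helpful elaborations, but the core argument matches the paper's exactly.
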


\begin{proof}
Since $d_0$, $d_1$, and $d_2$ are derivations of degree $-1$, $(d_0+d_1+d_2)^2=\frac{1}{2}[d_0+d_1+d_2,\,d_0+d_1+d_2]$. It suffices to show that for any $s^{-1}c\in \bar{C}[-1]$,  
\begin{eqnarray*}
&& (d_0+d_1+d_2)^2(s^{-1}c)\\
&=& (d_0d_0+d_0d_1+d_0d_2+d_1d_0+d_1d_1+d_1d_2+d_2d_0+d_2d_1+d_2d_2)(s^{-1}c)\\
&=& 0\,.
\end{eqnarray*}
Indeed,
\begin{itemize}
    \item $(d_0d_0)(s^{-1}c)=0$ and $(d_1d_0)(s^{-1}c)=0$ for the curved DG coalgebra $C$ is coaugmented;
    \item $(d_0d_1)(s^{-1}c)=0$ for $h\circ d\,=\,0$;
    \item $(d_2d_0)(s^{-1}c)=0$ by the construction of $d_2$;
    \item $(d_0d_2+d_1d_1)(s^{-1}c)=-(-1)^{|c'|}h(c')s^{-1}c''+h(c'')s^{-1}c'+s^{-1}d^2(c)=0$ since $d^2\,=\, (h\otimes \id- \id\otimes h)\circ\bar{\Delta}$;
    \item $(d_1d_2+d_2d_1)(s^{-1}c)=(-1)^{|c'|}s^{-1}d(c')\otimes s^{-1}c''-s^{-1}c'\otimes s^{-1}d(c'')+(-1)^{|d(c)'|}s^{-1}d(c)'\otimes s^{-1}d(c)''=0$ since $d$ is a coderivation;
    \item $(d_2d_2)(s^{-1}c)=0$ because of the coassociativity of $C$.
\end{itemize}
\end{proof}

Recall from \cite[Def. 2.9]{Idrissi18} (see also \cite[Sec. 3.3.1]{HM12}), a {\it semi-augmented} DG algebra $A$ is a DG algebra equipped with a linear map $\epsilon\,:\,A\,\rightarrow\,k$ (not necessarily compatible with the DG algebra structure) such that $\epsilon(1_{A})=1_{k}$. $\epsilon$ is called the {\it semi-augmentation map}. For a semi-augmented algebra $A$, let $\bar{A}$ denote the kernal of $\epsilon$. And one has an isomorphism of graded vector spaces $A\cong \bar{A}\oplus k$.

A morphism $f$ between two semi-augmented DG algebras $(A,\,d_A,\,\epsilon)$ and $(A',\,d_{A'},\,\epsilon')$ is a morphism of DG algebras $f\,:\,(A,\,d_A) \rightarrow (A',\,d_{A'})$ such that $\epsilon'\circ f=\epsilon$. We denote by $\mathtt{SemiDGA_{k/k}}$ the category of semi-augmented DG algebras over $k$. $\DGA_{k/k}$, the category of augmented DG algebras, is a full subcategory of $\mathtt{SemiDGA_{k/k}}$.

Given a coaugmented curved DG coalgebra $(C,\,d,\,h)$, the cobar construction $\cb(C)$ is semi-augmented with the canonical projection $\cb(C)\twoheadrightarrow k$ being the semi-augmentation map. Moreover, it is clear that $\cb(\mbox{--})$ is functor from the category of coaugmented curved DG coalgebras with strict morphisms to the category of semi-augmented DG algebras.

\subsection{Bar constructions of curved DG algebras}\label{subsecBar}
Recall from \cite{Pos11} that a curved DG algebra $(A,\,\triangledown,\,\Theta)$ is {\it augmented} if it is augmented as a graded algebra, and the augmentation map $\epsilon\,:\,A\,\rightarrow\,k$ satisfies $\epsilon\circ \triangledown=0$ and $\epsilon(\Theta)=0$. Notice that if $A$ is non-positively graded, then these two conditions are automatically satisfied. If $A$ is coaugmented, then $A=k\oplus\bar{A}$ as curved DG algebras, where $k$ is viewed as curved DG algebra $(k,\,0,\,0)$.

The {\it bar construction} $\bB(A)$ of an augmented curved DG algebra $(A,\,\triangledown,\,\Theta)$ is defined as the graded tensor coalgebra $T^{c}(\bar{A}[1])$ equipped with three coderivations $d_{0}$, $d_{1}$, and $d_{2}$, where $d_{0}$ (resp., $d_{1}$) is induced by $\Theta$ (resp., $\triangledown$) on $\bar{A}$ and $d_{2}$ is the coderivation whose corestriction to $\bar{A}[1]$ is given by the composite map
$$T^{c}(\bar{A}[1]) \twoheadrightarrow\bar{A}[1]\otimes \bar{A}[1]\cong k[1]\otimes k[1]\otimes \bar{A}\otimes\bar{A}\xrightarrow{\mu_{1}\otimes \mu} k[1]\otimes \bar{A}\cong\bar{A}[1]$$
where $\mu_{1}\,:\, k[1]\otimes k[1]\rightarrow k[1]$ is the map of degree $-1$ taking $1_{k[1]}\otimes 1_{k[1]}$ to $1_{k[1]}$.
More explicitly, given $(sa_{1},\,\cdots,\,sa_{n})\in \bB(A)$,
\begin{equation}\label{BarD0}
d_{0}(sa_{1},\,\cdots,\,sa_{n})\,=\,\sum_{i=0}^{ n}(-1)^{|a_{1}|+\cdots+|a_{i}|+i+1}(sa_{1},\,\cdots,\,sa_{i},\,s\Theta,\,sa_{i+1},\,\cdots,\,sa_{n})\,,
\end{equation}
\begin{equation}\label{BarD1}
d_{1}(sa_{1},\,\cdots,\,sa_{n})\,=\,\sum_{i=1}^{ n}(-1)^{|a_{1}|+\cdots+|a_{i-1}|+i}(sa_{1},\,\cdots,\,s\triangledown(a_{i}),\,\cdots,\,sa_{n})\,,
\end{equation}
and
\begin{equation}\label{BarD2}
d_{2}(sa_{1},\,\cdots,\,sa_{n})\,=\,\sum_{i=1}^{ n-1}(-1)^{|a_{1}|+\cdots+|a_{i}|+i-1}(sa_{1},\,\cdots,\,s(a_{i}a_{i+1}),\,\cdots,\,sa_{n})\,.
\end{equation}

The following proposition is formally dual to Proposition \ref{CobarDGA}, and the proof is very similar.

\begin{proposition}\label{BarDGC}
Given an augmented curved DG algebra $(A,\,\triangledown,\,\Theta)$, the bar construction $\bB(A)$ is a DG coalgebra with differential given by $d_0+d_1+d_2$.
\end{proposition}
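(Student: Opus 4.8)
The plan is to mimic the proof of Proposition \ref{CobarDGA} verbatim under the duality between tensor algebras and tensor coalgebras, exploiting that $d_0$, $d_1$, $d_2$ are all coderivations of degree $-1$ on the conilpotent cofree coalgebra $T^c(\bar A[1])$, so that a coderivation is determined by its corestriction to the cogenerators $\bar A[1]$. Thus to prove $(d_0+d_1+d_2)^2 = 0$ it suffices to check that the corestriction of $(d_0+d_1+d_2)^2$ to $\bar A[1]$ vanishes, i.e. to evaluate $\pi \circ (d_0+d_1+d_2)^2$ on an arbitrary element $(sa_1,\dots,sa_n)$ where $\pi : T^c(\bar A[1]) \twoheadrightarrow \bar A[1]$ is the canonical projection. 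First I would expand $(d_0+d_1+d_2)^2$ into the nine cross terms $d_id_j$ and organize them exactly as in the cobar proof.

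The key steps, term by term, are as follows. The terms $d_0 d_0$ and $d_0 d_1$ land in word-length $\geq 2$ after one application (they insert $s\Theta$ or leave length unchanged but then $d_0$ inserts again), so one checks their corestriction to $\bar A[1]$; here $d_0 d_0$ corestricts to something proportional to $\Theta\cdot\Theta$-type insertions that cancel by the sign bookkeeping, and more transparently $(d_0 d_0)$'s contribution together with the relevant piece of $d_1 d_0 + d_0 d_1$ vanishes because $\epsilon(\Theta)=0$ and $\epsilon\circ\triangledown=0$ (the augmentation hypotheses on the curved DG algebra), which is the dual of "$C$ is coaugmented." The term $d_1 d_1$ corestricted to $\bar A[1]$ gives $s\triangledown^2(a)$; the term $d_0 d_2 + d_2 d_0$ corestricted gives the commutator contribution $\pm[\Theta, a]$ coming from $d_2$ splitting off a $\Theta$ factor and $d_0$ re-multiplying — and $\triangledown^2 = [\Theta,-]$ forces $d_1d_1 + (d_0d_2+d_2d_0)$ to cancel on cogenerators. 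The term $d_1 d_0 + d_0 d_1$ (the part not already used) vanishes by $\epsilon(\Theta)\cdot(\cdots) = 0$ together with $\epsilon\circ\triangledown = 0$, dual to $h\circ d = 0$. The term $d_2 d_1 + d_1 d_2$ vanishes because $\triangledown$ is a derivation for $\mu$ (dual to $d$ being a coderivation). Finally $d_2 d_2$ corestricted to $\bar A[1]$ vanishes by associativity of $\mu$ (dual to coassociativity of $C$). Since all corestrictions to $\bar A[1]$ vanish and $(d_0+d_1+d_2)^2$ is again a coderivation, it is identically zero, hence $\bB(A)$ is a DG coalgebra.

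I would also note, before the term-by-term check, the two preliminary facts that make the argument go through: (i) each $d_i$ is a well-defined coderivation of degree $-1$ — for $d_0$ and $d_1$ this is the standard fact that a degree $-1$ map $\bar A[1] \to \bar A[1]$ extends uniquely to a coderivation of $T^c(\bar A[1])$, and for $d_2$ it is the corestriction formula given above; and (ii) a coderivation of a conilpotent cofree coalgebra vanishes iff its corestriction to the cogenerators vanishes. Both are dual to the statements used implicitly in Proposition \ref{CobarDGA}, where instead one uses that a derivation of a free algebra vanishes iff it vanishes on generators.

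The main obstacle is purely the sign bookkeeping: in the cobar setting the signs come from the Koszul rule applied to desuspensions $s^{-1}$ and the map $\Delta_{-1}$ sending $1 \mapsto -1\otimes 1$, whereas here they come from suspensions $s$ and the map $\mu_1$ sending $1\otimes 1 \mapsto 1$; one must verify that the sign discrepancies between \eqref{CobarD0}--\eqref{CobarD2} and \eqref{BarD0}--\eqref{BarD2} are exactly the ones that make the nine cross terms cancel in the correct pairs. Since the excerpt explicitly says the proof is "very similar" to Proposition \ref{CobarDGA}, I expect this to amount to transcribing that bullet list with $h \leadsto \Theta$, $d \leadsto \triangledown$, $\bar\Delta \leadsto \mu$, "coaugmented" $\leadsto$ "augmented", "coassociativity" $\leadsto$ "associativity", and adjusting signs, and I would present it in that compressed bulleted form rather than grinding through every sign.
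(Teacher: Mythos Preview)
Your strategy is correct and is exactly what the paper intends by ``formally dual'': since $(d_0+d_1+d_2)^2$ is a coderivation of the conilpotent cofree coalgebra $T^c(\bar A[1])$, it vanishes iff its corestriction $\pi\circ(d_0+d_1+d_2)^2$ to $\bar A[1]$ does, and one checks this by dualizing the bullet list of Proposition~\ref{CobarDGA}. However, your term-by-term attributions are partly garbled and would not go through as written. Because $d_0$ \emph{raises} word length by one, $\pi\circ d_0$ is nonzero only on word length $0$; hence $\pi\circ d_0d_0=\pi\circ d_0d_1=\pi\circ d_0d_2=0$ for elementary length reasons (dual to the cobar fact that $d_0$ on a generator lands in $k$, where derivations vanish). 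The augmentation hypotheses $\epsilon(\Theta)=0$ and $\epsilon\circ\triangledown=0$ are not what makes these cross terms vanish; they are used only in your step~(i), to guarantee $\Theta\in\bar A$ and $\triangledown(\bar A)\subset\bar A$ so that $d_0,d_1$ are well-defined coderivations on $T^c(\bar A[1])$. The vanishing of $\pi\circ d_1d_0$ follows from the Bianchi identity $\triangledown(\Theta)=0$ (this is the dual of $h\circ d=0$ in the cobar proof), not from the augmentation. The correct remaining pairings are $\pi\circ(d_2d_0+d_1d_1)=0$ from $\triangledown^2=[\Theta,\,\mbox{--}]$, $\pi\circ(d_1d_2+d_2d_1)=0$ from $\triangledown$ being a derivation, and $\pi\circ d_2d_2=0$ from associativity; these three you have right.
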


\subsection{Curved twisting morphisms}\label{subsecCTW}
Given a curved DG algebra $(A,\,\triangledown,\,\Theta)$ with the unit map $u\,:\,k \rightarrow A$ and a curved DG coalgebra $(C,\,d,\,h)$ with the counit map $\epsilon\,:\,C \rightarrow k$, the graded vector space $\bHom_{k}(C,\,A)$ is a graded algebra with the {\it convolution product} given by 
$$f\ast g\defeq \mu\circ (f\otimes g)\circ \Delta\,:\,C\rightarrow A\,.$$
The degree $-1$ map $\partial$ on $\bHom_{k}(C,\,A)$ defined by
$$\partial(f)=\triangledown\circ f-(-1)^{|f|}f\circ d$$
is a derivation (see \cite[Prop. 2.1.1]{LV}). 

\begin{proposition}
$\big(\bHom_{k}(C,\,A),\,\partial,\,\Theta\epsilon-u\circ h\big)$ is a curved DG algebra.
\end{proposition}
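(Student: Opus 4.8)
The plan is to verify directly the two curved DG algebra axioms for the triple $\big(\bHom_k(C,A),\,\partial,\,\Theta\epsilon - u\circ h\big)$. Write $\Phi \defeq \Theta\epsilon - u\circ h \in \bHom_k(C,A)$ for the curvature element; note it has degree $-2$ since $\Theta$ has degree $-2$, $\epsilon$ has degree $0$, $u$ has degree $0$, and $h$ has degree $-2$. We already know from \cite[Prop. 2.1.1]{LV} that $\partial$ is a degree $-1$ derivation of the convolution algebra, so the only things left are: (1) $\partial^2 = [\Phi,\,\text{--}\,]$ and (2) $\partial(\Phi) = 0$.

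For axiom (1), first compute $\partial^2(f)$ for $f \in \bHom_k(C,A)$. Expanding $\partial(f) = \triangledown f - (-1)^{|f|} f d$ and applying $\partial$ again, the cross terms involving one $\triangledown$ and one $d$ cancel, leaving $\partial^2(f) = \triangledown^2 f + f d^2$. Now substitute the curved structure relations: $\triangledown^2 = [\Theta,\,\text{--}\,] = \mu\circ(\Theta\otimes\id) - \mu\circ(\id\otimes\Theta)$ on $A$, and $d^2 = (h\otimes\id - \id\otimes h)\circ\Delta$ on $C$. This yields four terms. On the other hand, expand the commutator $[\Phi, f] = \Phi\ast f - (-1)^{2|f|}f\ast\Phi = \mu\circ(\Phi\otimes f)\circ\Delta - \mu\circ(f\otimes\Phi)\circ\Delta$ (up to the standard Koszul sign in the graded commutator, which I will track carefully). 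Substituting $\Phi = \Theta\epsilon - u h$: the term $\mu\circ((\Theta\epsilon)\otimes f)\circ\Delta$ collapses, via the counit axiom $(\epsilon\otimes\id)\Delta = \id$, to $\mu\circ(\Theta\otimes\id)\circ f = \triangledown^2\!\!\restriction$-contribution $\Theta f$; the term $\mu\circ((uh)\otimes f)\circ\Delta$ collapses, since $\mu\circ(u\otimes\id) = \id$, to $f$ precomposed with $(h\otimes\id)\Delta$, i.e. the $h\otimes\id$ piece of $d^2$; and symmetrically for the two terms of $f\ast\Phi$. Matching these four contributions with the four terms from $\triangledown^2 f + f d^2$ establishes (1). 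The only real care needed is bookkeeping the signs, particularly the $(-1)^{|f|}$ appearing in the definition of $\partial$ and the Koszul sign in the graded commutator; the degree of $\Phi$ being even ($-2$) simplifies the commutator sign to a genuine difference.

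For axiom (2), compute $\partial(\Phi) = \triangledown\circ\Phi - (-1)^{|\Phi|}\Phi\circ d = \triangledown\circ(\Theta\epsilon - uh) - \Phi\circ d$ (since $|\Phi| = -2$ is even). Now $\triangledown\circ(\Theta\epsilon) = \triangledown(\Theta)\cdot\epsilon = 0$ by the curved DG algebra axiom $\triangledown(\Theta) = 0$ (here $\triangledown(\Theta)$ means applying the derivation $\triangledown$ to the element $\Theta\in A$, so $\triangledown\circ(\Theta\epsilon)$ as a map is $c\mapsto \triangledown(\Theta\,\epsilon(c)) = \triangledown(\Theta)\epsilon(c) = 0$). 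Next $\triangledown\circ(uh)$: since $u$ is the unit and $\triangledown(1_A) = 0$ (a degree $-1$ derivation kills the unit), this map is also zero. So $\triangledown\circ\Phi = 0$. It remains to see $\Phi\circ d = 0$: we have $(\Theta\epsilon)\circ d = \Theta\cdot(\epsilon\circ d)$ and $\epsilon\circ d = 0$ because $d$ is a coderivation compatible with the counit (equivalently, $\epsilon$ is a chain map $C\to k$ where $k$ has zero differential — this is part of being a curved DG coalgebra with counit); and $(uh)\circ d = u\circ(h\circ d) = 0$ by the curved DG coalgebra axiom $h\circ d = 0$. Hence $\partial(\Phi) = 0$.

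I expect the main obstacle to be purely the sign bookkeeping in step (1): one must be consistent about the Koszul sign convention in $\bHom_k(C,A) = \bigoplus_p \Hom_k(C, A[-p])$, the sign $(-1)^{|f|}$ in $\partial$, the sign $-1$ in $\Delta_{-1}$-type conventions that propagate from the (co)bar normalization, and the graded-commutator sign $[\Phi,f] = \Phi f - (-1)^{|\Phi||f|}f\Phi$. No conceptual difficulty arises — everything reduces, via the (co)unit axioms, to the four curved-structure relations on $A$ and $C$ — but getting all signs to match requires care. A clean way to organize this is to first record the identities $\mu\circ(\Phi\otimes\id)\circ\Delta = \mu\circ(\Theta\otimes\id) - (\id\otimes h)\circ\Delta$-type reductions as a lemma-free inline computation using only counitality, and then plug in; this keeps the sign analysis localized.
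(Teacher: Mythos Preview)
Your approach is essentially identical to the paper's: expand $\partial^2(f)$, cancel the cross terms, substitute the curved-structure relations $\triangledown^2=[\Theta,-]$ and $d^2=(h\otimes\id-\id\otimes h)\Delta$, and regroup into a convolution commutator with $\Theta\epsilon - u\circ h$; then verify $\partial(\Phi)=0$ term-by-term using $\triangledown(\Theta)=0$, $\epsilon\circ d=0$, $\triangledown\circ u=0$, and $h\circ d=0$. One small slip to fix: the expansion actually gives $\partial^2(f) = \triangledown^2 f - f\, d^2$ (with a minus sign, not plus), which is exactly the sort of sign bookkeeping you already flag as the main care point.
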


\begin{proof}
For any $f\in \bHom_{k}(C,\,A)$,
\begin{eqnarray*}
\partial^2(f) &=& \triangledown\circ\partial(f)-(-1)^{|f|-1}\partial(f)\circ d\\
&=&\triangledown^2\circ f-(-1)^{|f|}\triangledown\circ f\circ d -(-1)^{|f|-1}\triangledown\circ f\circ d-f\circ d^2\\
&=&[\Theta,\,\mbox{--}]\circ f-f\circ (h\otimes \id- \id\otimes h)\circ\Delta\\
&=&(\Theta\epsilon)\ast f -f\ast (\Theta\epsilon) -(u\circ h)\ast f+f\ast (u\circ h)\\
&=&[\Theta\epsilon-u\circ h,\,f]\,.
\end{eqnarray*}
Moreover,
$$\partial(\Theta\epsilon-u\circ h)\,=\,\triangledown\circ \Theta\epsilon-\Theta\epsilon\circ d -\triangledown\circ u\circ h+u\circ h\circ d\,=\,0\,,$$
for $\triangledown(\Theta)=0$, $\epsilon\circ d=0$, $\triangledown\circ u=0$, and $ h\circ d=0$.
\end{proof}

An element $\alpha\in \bHom_{k}(C,\,A)$ of degree $-1$ is called a {\it curved twisting morphism} if it is a solution of the {\it curved Maurer-Cartan equation}
\begin{equation}\label{CMCeq}
\partial(\alpha)+\alpha\ast \alpha\,=\,-\Theta\epsilon+u\circ h\,.
\end{equation}
Write $\CTw(C,\,A)$ for the set of curved twisting morphisms from $C$ to $A$. When $A$ is (semi-)augmented and $C$ is coaugmented, a curved twisting morphism from $C$ to $A$ is supposed to send $k$ to $0$ and $\bar{C}$ to $\bar{A}$. 

\begin{proposition}\label{CobarCTwBijection}
For every semi-augmented DG alegbra $(A,\,d_A,\,\epsilon_A)$ and every coaugmented curved DG coalgebra $(C,\,d_C,\,h)$, there exists a bijection
$$\Hom_{\mathtt{SemiDGA_{k/k}}}(\cb(C),\,A)\,\cong\,\CTw(C,\,A)\,.$$
\end{proposition}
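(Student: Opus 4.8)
The plan is to construct the bijection explicitly and show the two directions are mutually inverse, using the fact that $\cb(C) = T(\bar C[-1])$ is a free graded algebra. The starting observation is that a morphism of \emph{graded} algebras $\Phi\colon T(\bar C[-1])\to A$ is the same datum as a graded linear map $\bar C[-1]\to A$, i.e. (after applying the shift) a degree $-1$ linear map $\alpha\colon \bar C\to \bar A$. Extend $\alpha$ to all of $C$ by sending $k$ to $0$; since any $\Phi$ satisfies $\epsilon_A\circ\Phi$ equal to the canonical projection onto $k$, compatibility with the semi-augmentations is automatic once we only consider $\Phi$ whose restriction to $T(\bar C[-1])^{\geq 1}$ lands in $\bar A$, which is forced. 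So at the level of underlying graded objects we already have the bijection $\Hom_{\gcAlg}(\cb(C),A)\cong \{\alpha\in\bHom_k(C,A)_{-1} : \alpha(k)=0,\ \alpha(\bar C)\subseteq\bar A\}$. The content of the proposition is that, under this correspondence, $\Phi$ being a \emph{chain map} (commuting with the differentials $d_0+d_1+d_2$ on $\cb(C)$ and $d_A$ on $A$) is equivalent to $\alpha$ solving the curved Maurer--Cartan equation \eqref{CMCeq}.

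The key step is therefore to translate the condition $d_A\circ\Phi = \Phi\circ(d_0+d_1+d_2)$ into an identity about $\alpha$. Since both sides are derivations relative to $\Phi$ (the left side because $d_A$ is a derivation and $\Phi$ is an algebra map; the right side because $d_0+d_1+d_2$ is a derivation of $T(\bar C[-1])$), it suffices to check the identity on generators, i.e. on $\bar C[-1]$. On a generator $s^{-1}c$: the left side gives $d_A(\alpha(c)) = \triangledown(\alpha(c))$ (up to the shift sign), i.e. the $\triangledown\circ\alpha$ term; the three pieces of the right side evaluated on $s^{-1}c$ give, using the explicit formulas \eqref{CobarD0}, \eqref{CobarD1}, \eqref{CobarD2}: from $d_0$ the term $h(c)\cdot 1_A$, realized as $(u\circ h)(c)$; from $d_1$ the term $\Phi(s^{-1}dc) = \alpha(dc) = (\alpha\circ d)(c)$; and from $d_2$ the term $\Phi(s^{-1}c'\cdot s^{-1}c'') = \alpha(c')\alpha(c'') = (\alpha\ast\alpha)(c)$ on $\bar C$ (note $\Delta$ restricted to $\bar C$ modulo the coaugmentation is $\bar\Delta$, and the primitive parts involving $1_C$ contribute nothing because $\alpha(1_C)=0$ except for the counit term, which is exactly what produces $-\Theta\epsilon$ — see below). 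Collecting signs, the chain-map condition on generators becomes precisely $\triangledown\circ\alpha - (-1)^{|\alpha|}\alpha\circ d + \alpha\ast\alpha = -\Theta\epsilon + u\circ h$ as maps on $C$, which is \eqref{CMCeq} after recognizing $\partial(\alpha) = \triangledown\circ\alpha + \alpha\circ d$ in degree $-1$. The $-\Theta\epsilon$ term on the right is a subtlety: it does not come from evaluating on $\bar C$ but encodes the behaviour at the coaugmentation $1_C$; one handles it by extending $\alpha$ to $C=k\oplus\bar C$ and noting that on $\bar C$ the equation \eqref{CMCeq} reads $\partial(\alpha)+\alpha\ast\alpha = u\circ h$ (since $\epsilon|_{\bar C}=0$ implies $\Theta\epsilon|_{\bar C}=0$), which is exactly the generator identity above, while on $k$ both sides of \eqref{CMCeq} agree trivially given $\alpha(1_C)=0$, $h(1_C)=0$, $\epsilon(1_C)=1$.

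The main obstacle is the careful sign bookkeeping: the three differentials $d_0, d_1, d_2$ carry Koszul signs depending on the internal degrees $|c_i|$ and on the position $i$, the suspension isomorphism $\bar C[-1]\cong k[-1]\otimes\bar C$ introduces the sign convention embedded in $\Delta_{-1}$ (sending $1$ to $-1\otimes 1$), and the derivation $\partial$ on $\bHom_k(C,A)$ has its own sign $(-1)^{|f|}$. I would fix all conventions at the outset (exactly as set in \S\ref{subsecCobar} and \S\ref{subsecCTW}), verify the generator identity with signs once and for all, and then invoke the universal property of the free algebra $T(\bar C[-1])$ together with the fact that a derivation out of a free algebra is determined by its values on generators to conclude that the generator-level equation is equivalent to the global chain-map equation. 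The remaining verifications — that the assignment $\Phi\mapsto\alpha$ and its inverse $\alpha\mapsto\Phi_\alpha$ (the unique algebra map extending $s^{-1}c\mapsto\alpha(c)$) are mutually inverse, and that they respect the semi-augmentation — are then immediate from freeness and the normalization $\alpha(k)=0$. This is the curved analogue of \cite[Prop.~2.2.7 / Thm.~2.3.1]{LV}, and the argument is structurally the same once the curvature term $h$ and its image $u\circ h$ under $\alpha$-extension are tracked through $d_0$.
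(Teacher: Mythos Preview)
Your approach is essentially the same as the paper's: use freeness of $T(\bar C[-1])$ to reduce the chain-map condition to generators, and verify that on a generator $s^{-1}c$ the identity $d_A\circ\Phi = \Phi\circ(d_0+d_1+d_2)$ is exactly the curved Maurer--Cartan equation for $\alpha$. One minor point of confusion: in this proposition $A$ is an ordinary (uncurved) semi-augmented DG algebra, so $\Theta=0$ and the curved MC equation \eqref{CMCeq} reduces to $\partial(\alpha)+\alpha\ast\alpha = u\circ h$; your discussion of how a $-\Theta\epsilon$ term should arise from the coaugmentation is therefore unnecessary here and slightly obscures an otherwise clean argument.
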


\begin{proof}
Since the underlying graded algebra of $\cb(C)$ is $T(\bar{C}[-1])$, there is a one-to-one correspondence between the set of semi-augmented algebra morphisms from $\cb(C)$ to $A$ and the set of $k$-linear maps from $\bar{C}[-1]$ to $\bar{A}$. Let $\tilde{\alpha}$ be a morphism in $\Hom_{\mathtt{SemiDGA_{k/k}}}(\cb(C),\,A)$. Its restriction to $\bar{C}[-1]$ induces a degree $-1$ map $\alpha\,:\,\bar{C}\rightarrow \bar{A}$ via $\alpha(c)=\tilde{\alpha}(s^{-1}c)$. The compatibility of $\tilde{\alpha}$ with differentials reads
\begin{equation}\label{CobarCTwBijectionCD1}
d_A\circ \tilde{\alpha}\,=\,\tilde{\alpha}\circ (d_0+d_1+d_2)\,.
\end{equation}
Given $s^{-1}c\in \bar{C}[-1]$, \eqref{CobarCTwBijectionCD1} implies that 
\begin{eqnarray}
d_A\circ \alpha(c) &=& d_A\circ \tilde{\alpha}(s^{-1}c)\label{CobarCTwBijectionCD2}\\
&=& \tilde{\alpha}\circ (d_0+d_1+d_2)(s^{-1}c)\nonumber\\ 
&=& \tilde{\alpha}\big(h(c)-s^{-1}d_C(c)-(-1)^{|c'|}s^{-1}c'\otimes s^{-1}c''\big)\nonumber\\
&=& h(c)-\alpha\circ d_C(c) -(-1)^{|c'|}\alpha(c')\alpha(c'')\,,\nonumber
\end{eqnarray}
which is equivalent to $\alpha$ satisfies
$$\partial(\alpha)+\alpha\ast \alpha\,=\,u\circ h\,.$$
Thus, $\alpha\in \CTw(C,\,A)$.

Conversely, given $\alpha\in \CTw(C,\,A)$, we construct a $k$-linear map $\tilde{\alpha}(s^{-1}c)$ from $\bar{C}[-1]$ to $\bar{A}$ via $\tilde{\alpha}(s^{-1}c)=\alpha(c)$. By abuse of notation, we still denote the corresponding semi-augmented algebra morphism from $\cb(C)$ to $A$ by $\tilde{\alpha}$. To check the compatibility of $\tilde{\alpha}$ with differentials, it is enough to check on the generators $\bar{C}[-1]$ which is done in \eqref{CobarCTwBijectionCD2}. Therefore, $\tilde{\alpha}\in \Hom_{\mathtt{SemiDGA_{k/k}}}(\cb(C),\,A)$. Finally, it is clear that the two constructions above are inverse to each other.
\end{proof}

\subsection{Twisted tensor products}\label{subsecTwTensor}
Let $\alpha\in \bHom_{k}(C,\,A)$ be a morphism of degree $-1$. Let $d_{\alpha}^{r}$ denote the degree $-1$ morphism from $C\otimes A$ to itself given by
$$d_{\alpha}^{r}\defeq (\id_{C}\otimes \mu)\circ (\id_{C}\otimes \alpha \otimes \id_{A})\circ (\Delta\otimes \id_{A})\,.$$
Explicitly, for $a\in A$ and $c\in C$
\begin{equation}\label{TwTensorD1}
	d_{\alpha}^{r}(c\otimes a) = (-1)^{|c'|}c'\otimes \alpha (c'')\cdot a\,,
\end{equation}
where $\Delta(c)=c'\otimes c''$ in the Sweedler notation. Similarly, let $d_{\alpha}^{l}$ denote the degree $-1$ morphism from $A\otimes C$ to itself given by
$$d_{\alpha}^{l}\defeq (\mu\otimes \id_{C})\circ (\id_{A}\otimes \alpha \otimes \id_{C})\circ ( \id_{A}\otimes \Delta)\,.$$
Explicitly, for $a\in A$ and $c\in C$
\begin{equation}\label{TwTensorD2}
d_{\alpha}^{l}(a\otimes c) = (-1)^{|a|}a\cdot\alpha(c')\otimes c''\,,
\end{equation}

\begin{lemma}\label{TwTensorD3}
The morphism $d_{\alpha}^{r}$ satisfies
$$d_{C\otimes A}\circ d_{\alpha}^{r}+d_{\alpha}^{r}\circ d_{C\otimes A}\,=\,d_{\partial(\alpha)}^{r}
\quad and\quad
d_{\alpha}^{r}\circ d_{\alpha}^{r}=d_{\alpha\ast \alpha}^{r}\,.$$
Moreover, the morphism $d_{\alpha}^{l}$ satisfies
$$d_{A\otimes C}\circ d_{\alpha}^{l}+d_{\alpha}^{l}\circ d_{A\otimes C}\,=\,d_{\partial(\alpha)}^{l}
\quad and\quad
d_{\alpha}^{l}\circ d_{\alpha}^{l}=-d_{\alpha\ast \alpha}^{l}\,.$$
\end{lemma}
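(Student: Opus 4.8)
The plan is to verify all four identities by a single direct computation on a simple tensor $c\otimes a$, the only real content being the bookkeeping of Koszul signs. I will use the following conventions: the canonical predifferential on the tensor product is $d_{C\otimes A}(c\otimes a)=d(c)\otimes a+(-1)^{|c|}c\otimes\triangledown(a)$ and $d_{A\otimes C}(a\otimes c)=\triangledown(a)\otimes c+(-1)^{|a|}a\otimes d(c)$; and for \emph{any} homogeneous $\beta\in\bHom_k(C,A)$ the defining formula of $d_\beta^r$ unwinds to $d_\beta^r(c\otimes a)=(-1)^{|\beta|\,|c'|}\,c'\otimes\beta(c'')\,a$, so that the right-hand sides $d_{\partial(\alpha)}^r$ and $d_{\alpha\ast\alpha}^r$ make sense and carry \emph{no} prefactor since $|\partial(\alpha)|=|\alpha\ast\alpha|=-2$. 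I will also use that $\partial(\alpha)=\triangledown\circ\alpha+\alpha\circ d$ (because $-(-1)^{|\alpha|}=+1$) and that $(\alpha\ast\alpha)(c)=(-1)^{|c'|}\alpha(c')\alpha(c'')$. Note that nothing beyond $|\alpha|=-1$, $d$ being a coderivation, and $\triangledown$ being a derivation is used; in particular $d^2\neq0$ and $\triangledown^2\neq0$ play no role.

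First I would establish $d_{C\otimes A}\circ d_\alpha^r+d_\alpha^r\circ d_{C\otimes A}=d_{\partial(\alpha)}^r$. Expanding $d_{C\otimes A}(d_\alpha^r(c\otimes a))$ with the graded Leibniz rule for $\triangledown$ applied to $\triangledown(\alpha(c'')\,a)$, and expanding $d_\alpha^r(d_{C\otimes A}(c\otimes a))$ with the coderivation property $\Delta(d(c))=d(c')\otimes c''+(-1)^{|c'|}c'\otimes d(c'')$, produces three kinds of terms. The terms of shape $d(c')\otimes\alpha(c'')\,a$ cancel, because the outer $d_\alpha^r$ in the second composite contributes $(-1)^{|d(c')|}=-(-1)^{|c'|}$ against the $(-1)^{|c'|}$ coming from the first composite. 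The terms of shape $c'\otimes\alpha(c'')\,\triangledown(a)$ cancel, because the Leibniz sign $(-1)^{|\alpha(c'')|}=(-1)^{|c''|-1}$ is opposite to the sign $(-1)^{|c|}(-1)^{|c'|}=(-1)^{|c''|}$ obtained by feeding $(-1)^{|c|}c\otimes\triangledown(a)$ into $d_\alpha^r$. The surviving terms are $c'\otimes\triangledown(\alpha(c''))\,a+c'\otimes\alpha(d(c''))\,a=c'\otimes\partial(\alpha)(c'')\,a=d_{\partial(\alpha)}^r(c\otimes a)$, as wanted.

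For $d_\alpha^r\circ d_\alpha^r=d_{\alpha\ast\alpha}^r$ I would simply iterate the defining formula: $d_\alpha^r(d_\alpha^r(c\otimes a))=(-1)^{|c'|}(-1)^{|c'_{(1)}|}\,c'_{(1)}\otimes\alpha(c'_{(2)})\alpha(c'')\,a$, relabel by coassociativity $\Delta^{(2)}(c)=c^{(1)}\otimes c^{(2)}\otimes c^{(3)}$ so the prefactor becomes $(-1)^{|c^{(1)}|+|c^{(2)}|+|c^{(1)}|}=(-1)^{|c^{(2)}|}$, and observe that the same relabeling turns $d_{\alpha\ast\alpha}^r(c\otimes a)=(-1)^{|c''_{(1)}|}c'\otimes\alpha(c''_{(1)})\alpha(c''_{(2)})\,a$ into the identical expression. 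The two left-handed identities are handled by the mirror computation; the first one is formally identical (a quick check shows the $\triangledown(a)$-only and $d(c)$-only terms again cancel and $d_{\partial(\alpha)}^l$ also carries no prefactor since $|\partial(\alpha)|=-2$), while in the second one the odd map $\alpha$ must now be commuted past the left factor $a\,\alpha(c')$, of degree $|a|+|c'|-1$, which contributes $(-1)^{|a|+|c'|-1}$ against the $(-1)^{|a|}$ from the outer $d_\alpha^l$, for a net prefactor $-(-1)^{|c^{(1)}|}$ — precisely the minus sign in $d_\alpha^l\circ d_\alpha^l=-d_{\alpha\ast\alpha}^l$. (One could instead deduce the left-handed statements from the right-handed ones by passing to $A^{op}$ and the co-opposite coalgebra, but the direct check is shorter.) I expect the only genuine difficulty to be keeping the Koszul signs consistent throughout, in particular reconciling the degree-$(-1)$ conventions of \eqref{TwTensorD1}--\eqref{TwTensorD2} with their degree-$(-2)$ instances $\partial(\alpha)$ and $\alpha\ast\alpha$, and confirming that the ``$d(c)$-only'' and ``$\triangledown(a)$-only'' terms really do cancel.
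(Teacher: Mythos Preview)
Your computation is correct and is exactly the kind of direct sign-chase the paper has in mind: the paper's own proof merely cites \cite[Prop.~1.6.2 and Lemma~2.1.4]{LV} for the $d_\alpha^r$ identities and says ``the statements about $d_\alpha^l$ can be proved in a similar manner''. You have carried out that ``similar manner'' explicitly, including the observation that the defining formulas \eqref{TwTensorD1}--\eqref{TwTensorD2} must be read with the general sign $(-1)^{|\beta||c'|}$ (resp.\ $(-1)^{|\beta||a|}$) so that $d_{\partial(\alpha)}^{r/l}$ and $d_{\alpha\ast\alpha}^{r/l}$ acquire no prefactor in degree $-2$---a point the paper leaves implicit. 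One small wording slip: in your last paragraph the prefactor $(-1)^{|a|}$ comes from the \emph{inner} application of $d_\alpha^l$, not the outer one; the arithmetic $(-1)^{|a|}\cdot(-1)^{|a|+|c'|-1}=-(-1)^{|c^{(1)}|}$ is unaffected.
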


\begin{proof}
The statements about $d_{\alpha}^{r}$ are given in (the proofs of) \cite[Prop. 1.6.2 and Lemma 2.1.4]{LV}. The statements about $d_{\alpha}^{l}$ can be proved in a similar manner.
\end{proof}

Given a (uncurved) DG algebra $A$ and a (uncurved) DG coalgebra $C$, let $\alpha\in \bHom_{k}(C,\,A)$ be a twisting morphism. It turns out that there are two chain complexes:
$$
C\otimes_\alpha A\defeq (C\otimes A,\,d_{C\otimes A}+d_{\alpha}^{r})
\qquad\text{and}\qquad
A\otimes_\alpha C\defeq (A\otimes C,\,d_{A\otimes C}-d_{\alpha}^{l})\,,
$$
which are called the {\it right twisted tensor product} and the {\it left twisted tensor product}, respectively (see \cite[Sec. 2.1]{LV} for details). In general, $(d_{C\otimes A}+d_{\alpha}^{r})^2$ and $(d_{A\otimes C}-d_{\alpha}^{l})^2$ are nontrivial if either $A$ or $C$ is curved. Nevertheless, if one of $A$ and $C$ is curved and the other is uncurved, there exist the following two chain complexes that are related to the twisted tensor product constructions.

\begin{proposition}\label{TwTensorBimod}
Let $(A,\,d_A)$ be a DG algebra and $(C,\,d_C,\,h)$ be a curved DG coalgebra. Suppose $\alpha$ is a curved twisting morphism, i.e., $\partial(\alpha)+\alpha\ast \alpha=u\circ h$. Then the free $A$-bimodule generated by $C$ is a chain complex, denoted by $A\otimes_{\alpha}C\otimes_{\alpha}A$, with differential given by
\begin{equation}\label{TwTensorBimodD}
d_{A\otimes C\otimes A}+\id_{A}\otimes d_{\alpha}^{r}-d_{\alpha}^{l}\otimes\id_{A}\,.
\end{equation}
\end{proposition}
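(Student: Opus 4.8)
The plan is to verify directly that the operator
$$D\,\defeq\,d_{A\otimes C\otimes A}+\id_{A}\otimes d_{\alpha}^{r}-d_{\alpha}^{l}\otimes\id_{A}$$
satisfies $D^{2}=0$. Here $d_{A\otimes C\otimes A}$ is the tensor-product differential assembled from $d_{A}$ on the two copies of $A$ and from $d_{C}$ on $C$, with the Koszul signs, while $\id_{A}\otimes d_{\alpha}^{r}$ (resp.\ $d_{\alpha}^{l}\otimes\id_{A}$) means $d_{\alpha}^{r}$ acting on the last two tensor factors $C\otimes A$ (resp.\ $d_{\alpha}^{l}$ acting on the first two factors $A\otimes C$); in the Koszul sign rule this forces $\id_{A}\otimes d_{\alpha}^{r}$ to carry an extra factor $(-1)^{|a|}$ on $a\otimes c\otimes a'$, whereas $d_{\alpha}^{l}\otimes\id_{A}$ carries none. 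It suffices to test $D^{2}$ on elements $a\otimes c\otimes a'$, and since the three summands of $D$ are of odd degree, expanding $D^{2}$ leaves three squares and three graded commutators:
\begin{align*}
D^{2}={}&(d_{A\otimes C\otimes A})^{2}+(\id_{A}\otimes d_{\alpha}^{r})^{2}+(d_{\alpha}^{l}\otimes\id_{A})^{2}\\
&{}+[d_{A\otimes C\otimes A},\,\id_{A}\otimes d_{\alpha}^{r}]-[d_{A\otimes C\otimes A},\,d_{\alpha}^{l}\otimes\id_{A}]-[\id_{A}\otimes d_{\alpha}^{r},\,d_{\alpha}^{l}\otimes\id_{A}]\,,
\end{align*}
and I would evaluate these six pieces one at a time.

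Since $A$ is an ordinary DG algebra, $d_{A}^{2}=0$ and the mixed $d_{A}$--$d_{C}$ terms in $(d_{A\otimes C\otimes A})^{2}$ cancel by the Koszul rule, so $(d_{A\otimes C\otimes A})^{2}=\id_{A}\otimes d_{C}^{2}\otimes\id_{A}$, where $d_{C}^{2}=(h\otimes\id-\id\otimes h)\circ\Delta$ by the definition of a curved DG coalgebra. For the other four pieces the point is that $d_{\alpha}^{r}$ (resp.\ $d_{\alpha}^{l}$) involves only the subfactors $C\otimes A$ (resp.\ $A\otimes C$) it is attached to, so Lemma~\ref{TwTensorD3} applies there verbatim, while the $d_{A}$ on the spectator copy of $A$ anticommutes with it; this gives $(\id_{A}\otimes d_{\alpha}^{r})^{2}=\id_{A}\otimes d_{\alpha\ast\alpha}^{r}$, $[d_{A\otimes C\otimes A},\,\id_{A}\otimes d_{\alpha}^{r}]=\id_{A}\otimes d_{\partial(\alpha)}^{r}$, and symmetrically $(d_{\alpha}^{l}\otimes\id_{A})^{2}=-d_{\alpha\ast\alpha}^{l}\otimes\id_{A}$, $[d_{A\otimes C\otimes A},\,d_{\alpha}^{l}\otimes\id_{A}]=d_{\partial(\alpha)}^{l}\otimes\id_{A}$. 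With the signs coming from $D^{2}$, these four combine into $\id_{A}\otimes d_{\partial(\alpha)+\alpha\ast\alpha}^{r}-d_{\partial(\alpha)+\alpha\ast\alpha}^{l}\otimes\id_{A}$, which by the hypothesis $\partial(\alpha)+\alpha\ast\alpha=u\circ h$ equals $\id_{A}\otimes d_{u\circ h}^{r}-d_{u\circ h}^{l}\otimes\id_{A}$. A one-line Sweedler computation from \eqref{TwTensorD1}--\eqref{TwTensorD2} then identifies $d_{u\circ h}^{r}$, as an operator on $C\otimes A$, with $c\otimes a\mapsto h(c'')\,c'\otimes a$, and $d_{u\circ h}^{l}$, on $A\otimes C$, with $a\otimes c\mapsto h(c')\,a\otimes c''$; hence $\id_{A}\otimes d_{u\circ h}^{r}-d_{u\circ h}^{l}\otimes\id_{A}=\id_{A}\otimes\big((\id\otimes h)\Delta-(h\otimes\id)\Delta\big)\otimes\id_{A}=-\,\id_{A}\otimes d_{C}^{2}\otimes\id_{A}$, which exactly cancels $(d_{A\otimes C\otimes A})^{2}$.

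What then remains is the cross commutator $[\id_{A}\otimes d_{\alpha}^{r},\,d_{\alpha}^{l}\otimes\id_{A}]$, and I would show it vanishes by expanding the two composites $(\id_{A}\otimes d_{\alpha}^{r})\circ(d_{\alpha}^{l}\otimes\id_{A})$ and $(d_{\alpha}^{l}\otimes\id_{A})\circ(\id_{A}\otimes d_{\alpha}^{r})$ on $a\otimes c\otimes a'$: coassociativity of $\Delta$ rewrites each of them as a sum of terms $\pm\,a\,\alpha(c')\otimes c''\otimes\alpha(c''')\,a'$ indexed by the iterated comultiplication $(\Delta\otimes\id)\Delta(c)=(\id\otimes\Delta)\Delta(c)=c'\otimes c''\otimes c'''$, and the Koszul signs come out opposite: the sign carried by $\id_{A}\otimes d_{\alpha}^{r}$ is evaluated on the first tensor factor $a$ in one composite and on $a\,\alpha(c')$ in the other, and the discrepancy $(-1)^{|\alpha(c')|}$ is precisely what makes the two composites cancel. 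Hence $[\id_{A}\otimes d_{\alpha}^{r},\,d_{\alpha}^{l}\otimes\id_{A}]=0$, and putting all six pieces together $D^{2}=0$.

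Every individual computation here is routine; the genuinely delicate part is the sign bookkeeping, in particular the asymmetry between $\id_{A}\otimes d_{\alpha}^{r}$ (which carries the extra $(-1)^{|a|}$) and $d_{\alpha}^{l}\otimes\id_{A}$ (which does not). Keeping that convention straight is exactly what forces the cross commutator to vanish and what makes the two ``curvature'' terms cancel $(d_{A\otimes C\otimes A})^{2}$ with the correct signs, so I expect that sign analysis---above all in $[\id_{A}\otimes d_{\alpha}^{r},\,d_{\alpha}^{l}\otimes\id_{A}]$---to be the main obstacle.
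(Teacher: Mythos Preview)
Your proposal is correct and follows essentially the same route as the paper: expand the square of the operator, invoke Lemma~\ref{TwTensorD3} to package the mixed terms as $d_{\partial(\alpha)+\alpha\ast\alpha}^{r}$ and $d_{\partial(\alpha)+\alpha\ast\alpha}^{l}$, use the curved Maurer--Cartan equation to reduce these to $d_{u\circ h}^{r}$ and $d_{u\circ h}^{l}$, check that these cancel against $\id_{A}\otimes d_{C}^{2}\otimes\id_{A}$, and verify that the cross commutator $[\id_{A}\otimes d_{\alpha}^{r},\,d_{\alpha}^{l}\otimes\id_{A}]$ vanishes. The only cosmetic difference is that you organize the expansion explicitly as ``three squares plus three graded commutators'' whereas the paper writes out the full product and then regroups; the substance is identical.
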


\begin{proof}
All we need to check is that \eqref{TwTensorBimodD} squares to zero:
\begin{eqnarray*}
&& (d_{A\otimes C\otimes A}+\id_{A}\otimes d_{\alpha}^{r}-d_{\alpha}^{l}\otimes\id_{A})\circ (d_{A\otimes C\otimes A}+\id_{A}\otimes d_{\alpha}^{r}-d_{\alpha}^{l}\otimes\id_{A})\\
&=& \id_A\otimes d_C^2\otimes \id_A+(d_A\otimes \id_{C\otimes A})\circ (\id_{A}\otimes d_{\alpha}^{r})+\id_A\otimes (d_{C\otimes A}\circ d_{\alpha}^{r})\\
&&-(d_{A\otimes C}\circ d_{\alpha}^{l})\circ \id_{A}-(\id_{A\otimes C}\otimes d_{A})\circ (d_{\alpha}^{l}\otimes\id_{A})\\
&& +(\id_{A}\otimes d_{\alpha}^{r})\circ (d_A\otimes \id_{C\otimes A})+\id_A\otimes (d_{\alpha}^{r}\circ d_{C\otimes A})+\id_A\otimes (d_{\alpha}^{r}\circ d_{\alpha}^{r})\\
&&-(\id_{A}\otimes d_{\alpha}^{r})\circ (d_{\alpha}^{l}\otimes\id_{A})-(d_{\alpha}^{l}\circ d_{A\otimes C})\otimes \id_{A}-(d_{\alpha}^{l}\otimes\id_{A})\circ (\id_{A\otimes C}\otimes d_{A})\\
&&-(d_{\alpha}^{l}\otimes\id_{A})\circ (\id_{A}\otimes d_{\alpha}^{r})+(d_{\alpha}^{l}\circ d_{\alpha}^{l})\otimes \id_{A}\\
&=&\id_{A}\otimes (d_{C\otimes A}\circ d_{\alpha}^{r}+d_{\alpha}^{r}\circ d_{C\otimes A}+d_{\alpha}^{r}\circ d_{\alpha}^{r})+d_A\otimes d_{\alpha}^{r}-d_A\otimes d_{\alpha}^{r}\\
&&-(d_{A\otimes C}\circ d_{\alpha}^{l}+d_{\alpha}^{l}\circ d_{A\otimes C}-d_{\alpha}^{l}\circ d_{\alpha}^{l})\otimes \id_{A}+d_{\alpha}^{l}\otimes d_{A}-d_{\alpha}^{l}\otimes d_{A}\\
&&+\id_A\otimes d_C^2\otimes \id_A-(\id_{A}\otimes d_{\alpha}^{r})\circ (d_{\alpha}^{l}\otimes\id_{A})-(d_{\alpha}^{l}\otimes\id_{A})\circ (\id_{A}\otimes d_{\alpha}^{r})
\end{eqnarray*}
By Lemma \ref{TwTensorD3}, 
$$d_{C\otimes A}\circ d_{\alpha}^{r}+d_{\alpha}^{r}\circ d_{C\otimes A}+d_{\alpha}^{r}\circ d_{\alpha}^{r}\,=\,d_{\partial{\alpha}+\alpha\ast\alpha}^{r}\,=\,d_{u\circ h}^{r}\,,$$
and
$$d_{A\otimes C}\circ d_{\alpha}^{l}+d_{\alpha}^{l}\circ d_{A\otimes C}-d_{\alpha}^{l}\circ d_{\alpha}^{l}\,=\,d_{\partial{\alpha}+\alpha\ast\alpha}^{l}\,=\,d_{u\circ h}^{l}\,.$$
By the definition, $\id_A\otimes d_C^2\otimes \id_A=d_{u\circ h}^{l}\otimes \id_{A}-\id_{A} \otimes d_{u\circ h}^{r}$. Moreover, one can check that
$$(\id_{A}\otimes d_{\alpha}^{r})\circ (d_{\alpha}^{l}\otimes\id_{A})\,=\,-(d_{\alpha}^{l}\otimes\id_{A})\circ (\id_{A}\otimes d_{\alpha}^{r})\,.$$
This finishes the proof of the deseried proposition.
\end{proof}

\begin{proposition}\label{CTwTensor}
Let $(A,\,\triangledown,\,\Theta)$ be a curved DG algebra and $(C,\,d)$ be a DG coalgebra. Suppose $\alpha$ is a curved twisting morphism, i.e., $\partial(\alpha)+\alpha\ast \alpha=-\Theta\epsilon$, then \eqref{TwTensorBimodD} induces a differential on $A\otimes_{A^e}(A\otimes C\otimes A)$. The resulting chain complex is denoted by $_{\alpha}A\otimes_{\alpha}C$.
\end{proposition}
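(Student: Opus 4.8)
The plan is to realize $A\otimes_{A^{e}}(A\otimes C\otimes A)$ as the commutator quotient $M/[A,M]$, where $M\defeq A\otimes C\otimes A$ is the free $A$-bimodule on $C$ and $[A,M]$ denotes the span of the graded commutators $a\cdot m-(-1)^{|a||m|}m\cdot a$ (so that, forgetting differentials, $A\otimes_{A^{e}}(A\otimes C\otimes A)$ is $A\otimes C$, which is the object denoted $_{\alpha}A\otimes_{\alpha}C$). I would then check that the operator $D$ of \eqref{TwTensorBimodD} is a $\triangledown$-derivation of the $A$-bimodule $M$, hence preserves $[A,M]$ and descends to the quotient, and finally that the descended operator squares to zero. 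The one computational ingredient is a closed formula for $D^{2}$ on $M$.

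First I would compute $D^{2}$ by repeating, almost verbatim, the bookkeeping in the proof of Proposition \ref{TwTensorBimod}, but now with the curvature carried by $A$ instead of by $C$: the term $\id_{A}\otimes d_{C}^{2}\otimes\id_{A}$ drops out because $C$ is uncurved, while $(d_{A\otimes C\otimes A})^{2}$ produces two new contributions $\triangledown^{2}\otimes\id_{C}\otimes\id_{A}$ and $\id_{A}\otimes\id_{C}\otimes\triangledown^{2}$. Using Lemma \ref{TwTensorD3}, the vanishing of the cross term $(\id_{A}\otimes d_{\alpha}^{r})\circ(d_{\alpha}^{l}\otimes\id_{A})+(d_{\alpha}^{l}\otimes\id_{A})\circ(\id_{A}\otimes d_{\alpha}^{r})$ exactly as in that proof, and the curved Maurer--Cartan equation $\partial(\alpha)+\alpha\ast\alpha=-\Theta\epsilon$, this should yield
$$D^{2}\,=\,\triangledown^{2}\otimes\id_{C}\otimes\id_{A}+\id_{A}\otimes\id_{C}\otimes\triangledown^{2}-\id_{A}\otimes d_{\Theta\epsilon}^{r}+d_{\Theta\epsilon}^{l}\otimes\id_{A}\,.$$
Evaluating the right-hand side on a generator $a_{0}\otimes c\otimes a_{1}$, the counit axiom of $C$ collapses the Sweedler sums appearing in $d_{\Theta\epsilon}^{r}$ and $d_{\Theta\epsilon}^{l}$, and since $\triangledown^{2}=[\Theta,\,\mbox{--}]$ with $\Theta$ of even degree, the four terms recombine exactly into $\Theta\cdot m-m\cdot\Theta$, the difference $L_{\Theta}-R_{\Theta}$ of the left and right $A$-actions of $\Theta$ on $M$. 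In other words $M$ is a curved DG $A$-bimodule (equivalently, curved DG $A^{e}$-module) with curvature $\Theta$.

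Granting this, the remainder is formal. One checks directly that $D$ is a $\triangledown$-derivation of the $A$-bimodule $M$ --- the two outer copies of $\triangledown$ account for the $\triangledown(a)$-terms, while $\id_{A}\otimes d_{C}\otimes\id_{A}$, $\id_{A}\otimes d_{\alpha}^{r}$ and $d_{\alpha}^{l}\otimes\id_{A}$ are $A$-linear on the relevant side. Hence $D([a,m])=[\triangledown a,\,m]+(-1)^{|a|}[a,\,Dm]\in[A,M]$, and $D$ descends to a degree $-1$ operator $\bar D$ on $M/[A,M]=A\otimes_{A^{e}}(A\otimes C\otimes A)$. Its square is induced by $D^{2}=L_{\Theta}-R_{\Theta}$, and $(L_{\Theta}-R_{\Theta})(m)=\Theta m-m\Theta=[\Theta,m]$ already lies in $[A,M]$; hence $\bar D^{2}=0$, which is the asserted differential on $_{\alpha}A\otimes_{\alpha}C$.

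I expect the only real work to be the sign bookkeeping behind the two displayed identities. For the first, one must rerun the argument of Proposition \ref{TwTensorBimod} and confirm that the cross term between the two outer copies of $\triangledown$ and the $d_{\alpha}^{l}$--$d_{\alpha}^{r}$ cross term both still vanish, so that only the curvature contributions and the $d_{\Theta\epsilon}$ terms survive. For the second, one checks that $d_{\Theta\epsilon}^{l}\otimes\id_{A}$ cancels precisely the ``internal'' summand of $\triangledown^{2}\otimes\id_{C}\otimes\id_{A}$ (the one that is not the bimodule action of $\Theta$) and that $-\id_{A}\otimes d_{\Theta\epsilon}^{r}$ cancels the corresponding summand of $\id_{A}\otimes\id_{C}\otimes\triangledown^{2}$, leaving exactly $L_{\Theta}-R_{\Theta}$; this cancellation works because $\Theta$ has even degree. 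No idea beyond the computation already carried out for Proposition \ref{TwTensorBimod} is needed.
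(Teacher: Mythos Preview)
Your proposal is correct and follows essentially the same route as the paper: you compute $D^{2}$ on $A\otimes C\otimes A$ by rerunning the bookkeeping of Proposition~\ref{TwTensorBimod} with the curvature on the algebra side, obtain exactly the paper's formula $\triangledown^{2}\otimes\id_{C}\otimes\id_{A}+\id_{A}\otimes\id_{C}\otimes\triangledown^{2}-\id_{A}\otimes d_{\Theta\epsilon}^{r}+d_{\Theta\epsilon}^{l}\otimes\id_{A}=L_{\Theta}-R_{\Theta}$, and then pass to the commutator quotient $A\otimes_{A^{e}}M\cong M/[A,M]$. Your write-up is slightly more explicit than the paper's in that you spell out why $D$ descends (it is a $\triangledown$-derivation of the bimodule structure, hence preserves $[A,M]$), a point the paper leaves implicit.
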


\begin{proof}
Following the proof of Proposition \ref{TwTensorBimod}, one can show that, on $A\otimes C\otimes A$,
\begin{eqnarray*}
&& (d_{A\otimes C\otimes A}+\id_{A}\otimes d_{\alpha}^{r}-d_{\alpha}^{l}\otimes\id_{A})\circ (d_{A\otimes C\otimes A}+\id_{A}\otimes d_{\alpha}^{r}-d_{\alpha}^{l}\otimes\id_{A})\\
&=&	\triangledown^2\otimes \id_{C}\otimes \id_{A}+\id_{A}\otimes \id_{C}\otimes \triangledown^2-\id_{A}\otimes d_{\Theta\epsilon}^{r}+d_{\Theta \epsilon}^{l}\otimes \id_{A}\\
&=&(\Theta\cdot\mbox{--})\otimes \id_{C}\otimes \id_{A}-\id_{A}\otimes\id_{C}\otimes (\mbox{--}\cdot \Theta) \,.
\end{eqnarray*}
The desired proposition follows from the fact that for any $A$-bimodule $M$, $A\otimes_{A^e}M$ is isomorphic to the commutator quotient space of $M$.
\end{proof}

\section{QLC Koszul algebras}\label{secQLC}
In this section, we introduce the notion of an associative algebra presented by quadratic-linear-constant (QLC) relations and study its curved Koszul duality theory, which enables us to obtain semi-free resolutions of the QLC algebra.
\subsection{QLC algebras}
A {\it quadratic-linear-constant (QLC)} data is a non-negatively graded vector space $V$ together with a degree homogeneous subspace
$$R \subset k\oplus V\oplus V^{\otimes 2}\,.$$
The QLC algebra $A=A(V,\,R)\defeq T(V)/(R)$ is the quotient of the free associative algebra over $V$ by the two-sided ideal $(R)$ generated by $R$. We assume that $R$ satisfies the following two properties:
\begin{eqnarray}
&R\cap (k\oplus V)=\{0\}&\label{qlca1}\\
&(R)\cap (k\oplus V\oplus V^{\otimes 2})=R\,.&\label{qlca2}
\end{eqnarray}
The first condition amounts to the minimality of the space of generators of $A(V,\,R)$, and the second condition implies that $(R)$ does not create new QLC relations. Notice that a QLC algebra is semi-augmented, and the semi-augmentation map is induced by the augmentation map from $T(V)$ to $k$.

Let $q\,:\,T(V)\twoheadrightarrow  V^{\otimes 2}$ be the projection onto the quadratic part of tensor algebra, and $qR$ be the image of $R$ under $q$. \eqref{qlca1} guarantees that there exists a degree $0$ linear map $\psi\,:\,qR\rightarrow k\oplus V$ such that $R$ is the graph of $\psi$. Write $\psi=(-\phi,\,\theta)$, where $\phi\,:\,qR\rightarrow V$ and $\theta\,:\,qR\rightarrow k$. One has
\begin{equation}\label{QLCgraph}
R\,=\,\{x-\phi(x)+\theta(x)\,\mid\,x\in qR\}\,.
\end{equation}

Since $(V,\,qR)$ is a quadratic data (in the sense of \cite[Sec. 3.1]{LV}), We denote by $qA$ the associated quadratic algebra, i.e., $qA=T(V)/(qR)$. Notice that $qA$ is weight graded:
\begin{equation}\label{KAWeightDecomp}
qA\,=\,\bigoplus_{n\in \mathbb{N}}qA^{(n)}\,=\,k\oplus V \oplus (V^{\otimes 2}/qR) \oplus \cdots \oplus \big(V^{\otimes n}/\sum_{i+2+j=n}V^{\otimes i}\otimes qR \otimes V^{\otimes j}\big)\oplus \cdots\,.
\end{equation}

\subsection{Koszul dual curved coalgebra}\label{subsecKDCCA}
Recall from \cite[Sec. 3.2]{LV}, the {\it Koszul dual coalgebra} $(qA)^\dual=C(sV,\,s^2qR)$ of the quadratic algebra $qA$ is a non-negtaively graded, connected coalgebra equipped with a weight grading given by
\begin{equation}\label{KDCWeightDecomp}
(qA)^{\dual}\,=\,\bigoplus_{n\in \mathbb{N}}(qA)^{\dual,\,(n)}\,=\,k\oplus sV \oplus s^2qR \oplus \cdots \oplus \big(\bigcap_{i+2+j=n}(sV)^{\otimes i}\otimes s^2qR \otimes (sV)^{\otimes j}\big)\oplus \cdots\,.
\end{equation}
$\phi$ induces a degree $-1$ map $\tilde{\phi}$ from $(qA)^{\dual}$ to $sV$ via the composition:
\begin{equation}\label{phiqadual}
\tilde{\phi}\,:\,(qA)^{\dual}=C(sV,\,s^2qR)\twoheadrightarrow s^2qR\xrightarrow{s^{-1}\phi} sV\,,
\end{equation}
and $\theta$ induces a degree $-2$ map $\tilde{\theta}$ from $(qA)^{\dual}$ to $k$ via the composition:
\begin{equation}\label{thetaqadual}
\tilde{\theta}\,:\,(qA)^{\dual}=C(sV,\,s^2qR)\twoheadrightarrow s^2qR\xrightarrow{s^{-2}\theta} k\,.
\end{equation}

\begin{proposition}\label{KoszulCurCAg}
$((qA)^\dual,\,\tilde{\phi},\,\tilde{\theta})$ is a curved DG coalgebra.
\end{proposition}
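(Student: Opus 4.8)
The plan is to verify that $\tilde{\phi}$ is a coderivation of degree $-1$ on the graded coalgebra $(qA)^\dual$, and then check the two curved DG coalgebra axioms, namely $\tilde{\phi}^2 = (\tilde{\theta}\otimes\id - \id\otimes\tilde{\theta})\circ\Delta$ and $\tilde{\theta}\circ\tilde{\phi}=0$. The key organizing principle is the weight grading \eqref{KDCWeightDecomp}: since $\tilde{\phi}$ drops weight by one and $\tilde{\theta}$ drops weight by two, it suffices to test each identity on a small number of low-weight components, and the whole verification is essentially dual to the construction of the Koszul dual curved DG algebra $(qA)^!$ in \cite{Pos93, PP05}. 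First I would recall that a coderivation of $(qA)^\dual$ landing in the weight-one cogenerators $sV$ is uniquely determined by its corestriction $(qA)^\dual \twoheadrightarrow sV$, so $\tilde{\phi}$ as defined in \eqref{phiqadual} does extend to a unique coderivation; one checks it has degree $-1$ because $s^{-1}\phi$ raises degree by $1$ while the two desuspensions... (more precisely, $\phi$ is degree $0$, $s^2qR$ sits in degree-shifted position, and the composite $s^2qR \xrightarrow{s^{-1}\phi} sV$ has degree $-1$). Similarly $\tilde\theta$ has degree $-2$.

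Next I would check $\tilde\theta\circ\tilde\phi = 0$. Both maps factor through projections onto $s^2qR$, so $\tilde\theta\circ\tilde\phi$ vanishes unless there is a weight-three component mapping first to $sV$ (weight one) — but $\tilde\theta$ only sees weight two, so $\tilde\theta\circ\tilde\phi$ is automatically zero on all components except possibly where $\tilde\phi$ lands in $s^2qR$, which it never does by construction since it lands in $sV$. Hence $\tilde\theta\circ\tilde\phi = 0$ for trivial weight reasons. Then I would turn to the curvature identity $\tilde\phi^2 = (\tilde\theta\otimes\id - \id\otimes\tilde\theta)\circ\bar\Delta$. By the weight grading, $\tilde\phi^2$ drops weight by two, so the only component on which this can be nonzero is the weight-three part $(qA)^{\dual,(3)} = \bigcap_{i+2+j=3}(sV)^{\otimes i}\otimes s^2qR\otimes(sV)^{\otimes j} = \big((sV)\otimes s^2qR\big)\cap\big(s^2qR\otimes(sV)\big)$, mapping into weight one $sV = k\otimes sV = sV\otimes k$. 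On an element $x$ of this weight-three space, the left side $\tilde\phi^2(x)$ unwinds as: first apply $\bar\Delta$ (which on weight three produces the $sV\otimes s^2qR$ and $s^2qR\otimes sV$ pieces), then apply $\tilde\phi$ to the $s^2qR$ factor to land in $(sV)^{\otimes 2}$, then iterate. The right side inserts $\tilde\theta$ in one tensor slot. Reading both sides through the identification of $x$ as lying in $V^{\otimes 3}$ (dually to $qA^{(3)}$), the identity $\tilde\phi^2 = (\tilde\theta\otimes\id - \id\otimes\tilde\theta)\circ\bar\Delta$ on weight three is precisely the condition \eqref{qlca2} that $(R)$ creates no new QLC relations in weight $\le 2$, expressed on the relations $R = \{x - \phi(x) + \theta(x)\}$: the associativity of the would-be multiplication on $A$ forces the quadratic-linear and quadratic-constant compatibilities $(\phi\otimes\id - \id\otimes\phi)|_{(V\otimes qR)\cap(qR\otimes V)} = $ (map into $qR$ then $\phi$), and the analogous constant-level equation, which dualize exactly to the stated curvature identity.

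The main obstacle will be the weight-three curvature computation: correctly tracking the Koszul signs through the desuspensions $s$ and $s^{-1}$, the Sweedler-notation expansion of $\bar\Delta$ on $(qA)^{\dual,(3)}$, and the identification of the intersection $\big((sV)\otimes s^2qR\big)\cap\big(s^2qR\otimes(sV)\big)$ with its image in $V^{\otimes 3}$. It will be cleanest to derive the needed identities on $\phi$ and $\theta$ once and for all from \eqref{qlca2}: write a general element of $(R)\cap(k\oplus V\oplus V^{\otimes 2}\oplus V^{\otimes 3})$ in weight $\le 3$ as a combination of $r\otimes v$ and $v\otimes r$ for $r\in R$, $v\in V$, impose that its weight-$\le 2$ part vanishes (using \eqref{qlca1}), and extract the relations among $\phi,\theta$; then dualize. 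Everything else — that $\tilde\phi$ is a well-defined coderivation, that $\tilde\theta$ is well-defined, the degree bookkeeping, the vanishing $\tilde\theta\circ\tilde\phi=0$, and the vanishing of the curvature identity in all weights $\ne 3$ — is immediate from the weight grading and requires no real computation. I would also remark that this proposition is formally linearly dual to the statement (in \cite{PP05}) that $(qA)^!$ with the induced differential and curvature element is a curved DG algebra, so one may alternatively cite that and dualize using the locally finite, non-negatively graded hypothesis together with the Remark following Proposition \ref{CurvedDGAC}.
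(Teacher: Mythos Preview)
Your proposal has three genuine gaps. First and most substantial: you assert that $\tilde\phi$ is a coderivation of $(qA)^\dual$, but $(qA)^\dual$ is not cofree, so the map \eqref{phiqadual} extends canonically only to a coderivation of $T^c(sV)$, and you must then prove this coderivation preserves the subcoalgebra $(qA)^\dual$. This is the longest part of the paper's argument: it uses condition \eqref{PP05LemmaCC1} of Lemma~\ref{PP05LemmaCC} for the base case $\tilde\phi((qA)^{\dual,(3)})\subset (qA)^{\dual,(2)}$ and then an induction on weight exploiting the description \eqref{KDCWeightDecomp}. Second, your claim that $\tilde\theta\circ\tilde\phi=0$ holds ``for trivial weight reasons'' is wrong: you have conflated the defining map \eqref{phiqadual}, which lands in $sV$, with its extension to a coderivation, which sends $(qA)^{\dual,(3)}$ to $(qA)^{\dual,(2)}=s^2qR$, exactly where $\tilde\theta$ is nonzero. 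The vanishing on weight three is precisely condition \eqref{PP05LemmaCC3}, and it is a genuine consequence of \eqref{qlca2}, not a formality. Third, your reduction of the curvature identity to weight three is not justified by ``$\tilde\phi^2$ drops weight by two'': that only says weight $n$ maps to weight $n-2$, not that higher weights vanish. The paper's mechanism is that both $\tilde\phi^2$ and $(\tilde\theta\otimes\id-\id\otimes\tilde\theta)\circ\Delta$ are coderivations (the latter via Lemma~\ref{curvatureCoder}), so their difference is determined by its corestriction to $sV$, which lives in weight one and hence only sees weight three on the source; this is where \eqref{PP05LemmaCC2} enters.

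Your suggested alternative of dualizing the curved DG algebra result of \cite{Pos93,PP05} via Proposition~\ref{CurvedDGAC} would work, but only under a local-finiteness hypothesis on $V$ that the paper does not impose at this point (cf.\ the Remark immediately following the paper's proof).
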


We call this curved DG coalgebra the {\it Koszul dual curved coalgebra} of $A$. 

\begin{lemma}\cite[Ch. 5 Prop. 1.1]{PP05}\label{PP05LemmaCC} The map $\psi=(-\phi,\,\theta)$ satisfies the following conditions
\begin{eqnarray}
&(\phi\otimes \id-\id\otimes \phi)(V\otimes qR\cap qR\otimes V)\,\subset\,qR& \label{PP05LemmaCC1}\\
&\phi\circ(\phi\otimes \id-\id\otimes \phi)(V\otimes qR\cap qR\otimes V)\,=\,(\theta\otimes \id-\id\otimes \theta)(V\otimes qR\cap qR\otimes V)&\label{PP05LemmaCC2}\\
&\theta\circ(\phi\otimes \id-\id\otimes \phi)(V\otimes qR\cap qR\otimes V)\,=\,0\,.&\label{PP05LemmaCC3}
\end{eqnarray}
\end{lemma}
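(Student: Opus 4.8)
The plan is to prove the three identities \eqref{PP05LemmaCC1}--\eqref{PP05LemmaCC3} directly from the two defining properties \eqref{qlca1}--\eqref{qlca2} of a QLC data, by exploiting the description of $R$ as the graph \eqref{QLCgraph} of $\psi=(-\phi,\theta)$. The key observation is that the intersection $W\defeq (V\otimes qR)\cap (qR\otimes V)$ inside $V^{\otimes 3}$ is exactly the space of quadratic relations of weight $3$ in the associated quadratic algebra $qA$, and that the ideal $(R)$ must be compatible with these ``overlapping'' relations. Concretely, for a basis element $x\in W$, writing $x=\sum v_i\otimes r_i=\sum r'_j\otimes v'_j$ with $r_i,r'_j\in qR$, I would form the corresponding genuine relation in $A=T(V)/(R)$ by replacing each $r_i$ (resp.\ $r'_j$) by its full QLC lift $r_i-\phi(r_i)+\theta(r_i)$ (resp.\ similarly).

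First I would set up the bookkeeping: because $x$ lies in both $V\otimes qR$ and $qR\otimes V$, the two expressions $\sum v_i\otimes (r_i-\phi(r_i)+\theta(r_i))$ and $\sum (r'_j-\phi(r'_j)+\theta(r'_j))\otimes v'_j$ are two elements of $k\oplus V\oplus V^{\otimes 2}\oplus V^{\otimes 3}$ whose degree-$3$ parts agree (both equal $x$). Their difference therefore lies in $(R)\cap(k\oplus V\oplus V^{\otimes 2})$, and by \eqref{qlca2} this difference must lie in $R$ itself. Next I would compute that difference term-by-term and sort it by tensor-weight. The weight-$2$ part of the difference is precisely $(\phi\otimes\id-\id\otimes\phi)(x)\in V^{\otimes 2}$ (up to the sign conventions coming from \eqref{QLCgraph}), together with lower-weight pieces coming from $\theta$. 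Demanding that this element lie in $R$, and then projecting via the graph decomposition $R=\{y-\phi(y)+\theta(y)\mid y\in qR\}$ and \eqref{qlca1}, forces the weight-$2$ part to lie in $qR$: this is exactly \eqref{PP05LemmaCC1}.

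Once \eqref{PP05LemmaCC1} is established, write $y\defeq (\phi\otimes\id-\id\otimes\phi)(x)\in qR$. The element of $R$ produced above then equals $y-\phi(y)+\theta(y)$ modulo the relations already accounted for, so comparing the remaining weight-$1$ and weight-$0$ components of the difference against $-\phi(y)+\theta(y)$ yields two further equalities. Matching the weight-$1$ (i.e.\ $V$-valued) parts gives $\phi\circ(\phi\otimes\id-\id\otimes\phi)(x)=(\theta\otimes\id-\id\otimes\theta)(x)$, which is \eqref{PP05LemmaCC2}; matching the weight-$0$ (scalar) parts gives $\theta\circ(\phi\otimes\id-\id\otimes\phi)(x)=0$, which is \eqref{PP05LemmaCC3}. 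Throughout I would track the Koszul signs $(-1)^{|\cdot|}$ arising from the graded tensor factors, using that $\phi$ has degree $0$ and $\theta$ degree $0$ in internal grading while carrying the appropriate homological shifts.

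The main obstacle I anticipate is the sign and weight bookkeeping in separating the difference $\sum v_i\otimes(r_i-\phi(r_i)+\theta(r_i))-\sum(r'_j-\phi(r'_j)+\theta(r'_j))\otimes v'_j$ into its graded pieces cleanly enough that the projections onto $V^{\otimes 2}$, $V$, and $k$ line up exactly with the three asserted identities; in particular one must be careful that the cross terms like $v_i\otimes\phi(r_i)$ and $\phi(r'_j)\otimes v'_j$ recombine into $(\phi\otimes\id-\id\otimes\phi)(x)$ with the correct sign, and that the ``new relations'' ruled out by \eqref{qlca2} are invoked at the right weight. This is essentially the content of \cite[Ch.~5 Prop.~1.1]{PP05}, so I would present the argument as a self-contained verification following their strategy, with the structural input being entirely \eqref{qlca1}, \eqref{qlca2}, and the graph description \eqref{QLCgraph}.
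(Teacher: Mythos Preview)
Your proposal is correct and is precisely the argument given in \cite[Ch.~5 Prop.~1.1]{PP05}, which the paper cites without reproducing a proof. One minor sharpening: although you invoke condition \eqref{qlca2}, your element $A_1-A_2$ actually lies in $V\otimes R + R\otimes V$, so you only need the weaker hypothesis $(V\otimes R + R\otimes V)\cap(k\oplus V\oplus V^{\otimes 2})\subset R$; this is exactly what the paper observes later (see \eqref{qlca2W}). Also, since $\phi$ and $\theta$ are degree-$0$ maps, no Koszul signs appear in the computation---your caution there is unnecessary.
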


\begin{lemma}\label{curvatureCoder}
Let $h$ be a linear map of degree $-2$ from a graded coalgebra $C$ to $k$, then $(h\otimes \id- \id\otimes h)\circ\Delta$ is a coderivation on $C$.
\end{lemma}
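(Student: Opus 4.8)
The plan is to check directly that the degree $-2$ map $f\defeq(h\otimes\id-\id\otimes h)\circ\Delta$ obeys the co-Leibniz identity that defines a coderivation. Since $h$, hence $f$, has \emph{even} degree, the Koszul sign rule contributes no signs, so the identity to verify is simply
$$\Delta\circ f\,=\,(f\otimes\id+\id\otimes f)\circ\Delta\,.$$
If one's notion of coderivation of a counital coalgebra also requires $\epsilon\circ f=0$, this is immediate from the counit axiom, since $\epsilon\circ(h\otimes\id)\circ\Delta=h=\epsilon\circ(\id\otimes h)\circ\Delta$, and so $\epsilon\circ f=0$.

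First I would unwind everything in Sweedler notation. Writing $\Delta(c)=c_{(1)}\otimes c_{(2)}$, one has $f(c)=h(c_{(1)})\,c_{(2)}-h(c_{(2)})\,c_{(1)}$. Applying $\Delta$ once more and using coassociativity to normalize all indices against the iterated coproduct $\Delta^{(2)}(c)=c_{(1)}\otimes c_{(2)}\otimes c_{(3)}$ yields
$$\Delta(f(c))\,=\,h(c_{(1)})\,c_{(2)}\otimes c_{(3)}-h(c_{(3)})\,c_{(1)}\otimes c_{(2)}\,.$$
Next I would expand the right-hand side in the same way: $(f\otimes\id)\circ\Delta$ produces $h(c_{(1)})\,c_{(2)}\otimes c_{(3)}-h(c_{(2)})\,c_{(1)}\otimes c_{(3)}$, while $(\id\otimes f)\circ\Delta$ produces $h(c_{(2)})\,c_{(1)}\otimes c_{(3)}-h(c_{(3)})\,c_{(1)}\otimes c_{(2)}$, again after rewriting with coassociativity.

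The decisive step is then the cancellation of the two middle terms $\pm h(c_{(2)})\,c_{(1)}\otimes c_{(3)}$, which leaves exactly the expression obtained above for $\Delta(f(c))$. There is really no serious obstacle here: the content is entirely the bookkeeping of Sweedler indices under coassociativity, together with the observation that no signs intervene because $h$ has degree $-2$. (This lemma will then be used to make sense of the curvature relation for $(qA)^\dual$ in Proposition \ref{KoszulCurCAg}, where $\tilde{\theta}$ plays the role of $h$.)
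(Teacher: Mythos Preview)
Your proof is correct and follows essentially the same approach as the paper's: both verify the co-Leibniz identity by expanding $(f\otimes\id+\id\otimes f)\circ\Delta$ using coassociativity, observing that the two ``middle'' terms $\pm(\id\otimes h\otimes\id)\circ\Delta^{(3)}$ cancel, and identifying the remainder with $\Delta\circ f$. The only difference is cosmetic---the paper carries out the computation in operator notation (writing everything as maps composed with the iterated coproduct $\Delta^{(3)}$) rather than in Sweedler notation; your additional remark on $\epsilon\circ f=0$ and on the absence of Koszul signs is a helpful clarification but not a different argument.
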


\begin{proof}
\begin{eqnarray*}
&&\big((h\otimes \id- \id\otimes h)\Delta\otimes\id+\id\otimes (h\otimes \id- \id\otimes h)\Delta\big)\circ\Delta\\
&=&(h\otimes \id\otimes \id-\id\otimes h \otimes \id + \id\otimes h \otimes \id-\id\otimes \id \otimes h)\circ \Delta^{(3)}\\
&=&(h\otimes \id\otimes \id-\id\otimes \id \otimes h)\circ \Delta^{(3)}\\
&=&(h\otimes \Delta-\Delta\otimes h)\circ \Delta\\
&=&\Delta\circ\big( (h\otimes \id- \id\otimes h)\circ\Delta\big)
\end{eqnarray*}
\end{proof}

\begin{proof}[Proof of Proposition \ref{KoszulCurCAg}]
By the construction, there exist a unique coderivation on $T^c(sV)$ extending $\tilde{\phi}$ (which is denoted by $\tilde{\phi}$ as well). First of all, we need to show that $\tilde{\phi}((qA)^\dual)\subset (qA)^\dual$. Recall from \eqref{KDCWeightDecomp} that there is a weight decomposition on $(qA)^\dual$. By the formula for coderivations on $T^c(sV)$, one can check that \eqref{PP05LemmaCC1} in Lemma \ref{PP05LemmaCC} implies $\tilde{\phi}((qA)^{\dual,\,(3)})\subset (qA)^{\dual,\,(2)}$. Moreover, $\tilde{\phi}((qA)^{\dual,\,(2)})\subset (qA)^{\dual,\,(1)}$ by the construction of $\tilde{\phi}$. We shall show that $\tilde{\phi}((qA)^{\dual,\,(n+1)})\subset (qA)^{\dual,\,(n)}$ by induction. 

Let $p_1$ (resp., $p_{n-1}$) denote the canonical projection from $T^c(sV)$ onto $sV$ (resp., $(sV)^{\otimes n-1}$), and one has 
$$(p_{n-1}\otimes p_1)\circ\Delta \mid_{(sV)^{\otimes n}}\,=\,\id\mid_{(sV)^{\otimes n}}\,=\,(p_1\otimes p_{n-1})\circ\Delta \mid_{(sV)^{\otimes n}}\,.$$
It is clear that $\tilde{\phi}((qA)^{\dual,\,(n+1)})\subset (sV)^{\otimes n}$. Hence, on $(qA)^{\dual,\,(n+1)}$,
\begin{equation}\label{DiffCurCAg1}
\tilde{\phi}\,=\,(p_{n-1}\otimes p_1)\circ\Delta\circ \tilde{\phi}\,=\,\big((p_{n-1}\circ \tilde{\phi})\otimes p_1+p_{n-1}\otimes (p_1\circ \tilde{\phi})\big)\circ\Delta\,,
\end{equation}
and
\begin{equation}\label{DiffCurCAg2}
\tilde{\phi}\,=\,(p_1\otimes p_{n-1})\circ\Delta\circ \tilde{\phi}\,=\,\big((p_1\circ \tilde{\phi})\otimes p_{n-1}+p_1\otimes (p_{n-1}\circ \tilde{\phi})\big)\circ\Delta\,.
\end{equation}
The weight grading on $(qA)^\dual$ implies that 
\begin{eqnarray*}
& (\id\otimes p_1)\circ \Delta((qA)^{\dual,\,(n+1)})\subset (qA)^{\dual,\,(n)}\otimes (qA)^{\dual,\,(1)}\,,&\\
\text{and}& \quad(p_{n-1}\otimes \id)\circ \Delta((qA)^{\dual,\,(n+1)})\subset (qA)^{\dual,\,(n-1)}\otimes (qA)^{\dual,\,(2)}\,.&
\end{eqnarray*}
 By \eqref{DiffCurCAg1} and the inductive hypothesis $\tilde{\phi}((qA)^{\dual,\,(n)})\subset (qA)^{\dual,\,(n-1)}$,  we obtain that $\tilde{\phi}((qA)^{\dual,\,(n+1)})\subset (qA)^{\dual,\,(n-1)}\otimes sV$ . Similarly, one could deduce $\tilde{\phi}((qA)^{\dual,\,(n+1)})\subset sV\otimes  (qA)^{\dual,\,(n-1)}$ from \eqref{DiffCurCAg2}. Therefore,
 \begin{eqnarray*}
 \tilde{\phi}((qA)^{\dual,\,(n+1)})& \subset & (qA)^{\dual,\,(n-1)}\otimes sV\bigcap sV\otimes  (qA)^{\dual,\,(n-1)}\\
 & \subset & \big(\bigcap_{i+2+j=n-1}(sV)^{\otimes i}\otimes s^2qR \otimes (sV)^{\otimes j+1}\big)\bigcap \big(\bigcap_{i+2+j=n-1}(sV)^{\otimes i+1}\otimes s^2qR \otimes (sV)^{\otimes j}\big)\\
 &=& \bigcap_{i+2+j=n}(sV)^{\otimes i}\otimes s^2qR \otimes (sV)^{\otimes j}\\
 &=&  (qA)^{\dual,\,(n)}\,.
 \end{eqnarray*}
Therefore, $\tilde{\phi}$ is a coderivation on $(qA)^\dual$.

Next, by Lemma \ref*{curvatureCoder}, both $(\tilde{\theta}\otimes \id- \id\otimes \tilde{\theta})\circ\Delta$ and  $\tilde{\phi}^2=\frac{1}{2}[\tilde{\phi},\,\tilde{\phi}]$ are coderivations on $(qA)^\dual$. Thus, $\tilde{\phi}^2-(\tilde{\theta}\otimes \id- \id\otimes \tilde{\theta})\circ\Delta$ is completely determined by its corestriction to $sV$, which vanishes by \eqref{PP05LemmaCC2}. Thus, 
$$\tilde{\phi}^2\,=\,(\tilde{\theta}\otimes \id- \id\otimes \tilde{\theta})\circ\Delta\,.$$

Lastly, it is clear that \eqref{PP05LemmaCC3} implies $\tilde{\theta}\circ\tilde{\phi}=0$. This finishes the proof of the desired proposition.
\end{proof}

\begin{remark}
In \cite{Pos93}, the author shows that $(qA)^!$, the graded linear dual of $(qA)^\dual$, is a curved DG algebra under some finiteness assumptions.
\end{remark}

\subsection{The cobar construction of the Koszul dual curved coalgebra}\label{subsecCobarKDCCA}
Consider the following composite map from the Koszul dual curved coalgebra  $(qA)^{\dual}$ to $A$, 
$$\kappa\,:\,(qA)^{\dual}=C(sV,\,s^2qR)\twoheadrightarrow sV \xrightarrow{s^{-1}}V\hookrightarrow A=A(V,\,R)\,.$$

\begin{lemma}\label{KoszulCTW}
The map $\kappa$ is a curved twisting morphism in $\bHom_{k}((qA)^{\dual},\,A)$.
\end{lemma}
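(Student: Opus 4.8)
The plan is to verify directly that $\kappa$ satisfies the curved Maurer-Cartan equation $\partial(\kappa) + \kappa \ast \kappa = u\circ \tilde\theta$ in $\bHom_k((qA)^\dual,\, A)$, using the fact that both sides are maps out of the conilpotent, weight-graded coalgebra $(qA)^\dual$. Since $A$ carries the trivial differential ($d_A = 0$), the derivation $\partial$ reduces to $\partial(\kappa) = -(-1)^{|\kappa|}\kappa\circ d_{(qA)^\dual} = \kappa\circ\tilde\phi$, so the equation to check becomes
\[
\kappa\circ\tilde\phi + \mu\circ(\kappa\otimes\kappa)\circ\Delta \;=\; u\circ\tilde\theta\,.
\]
First I would record that the left-hand side, like the right-hand side, is a map of degree $-1$ from $(qA)^\dual$ to $A$ that annihilates $k$ and $sV$ (the weight $\leqslant 1$ part), simply because $\kappa$ vanishes off $sV$, $\tilde\phi$ lands in weight $n-1$, and $\tilde\theta$ vanishes off weight $2$. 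On weight $n\geqslant 3$, both $\kappa\circ\tilde\phi$ and $\kappa\ast\kappa$ vanish for degree/weight reasons — $\tilde\phi$ of a weight-$n$ element lands in weight $n-1\geqslant 2$ which $\kappa$ kills, and $\bar\Delta$ of a weight-$n$ element has both tensor factors of weight $\geqslant 1$, at most one of which can have weight exactly $1$, so $\mu\circ(\kappa\otimes\kappa)$ kills it too; and $\tilde\theta$ also vanishes. So the identity is nontrivial only on the weight $2$ component $(qA)^{\dual,(2)} = s^2qR$.

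The core of the argument is therefore the weight-$2$ computation. On $s^2 x$ for $x\in qR\subset V^{\otimes 2}$: the reduced coproduct $\bar\Delta(s^2x)$ lives in $sV\otimes sV$, and under the identification $s^2qR\hookrightarrow sV\otimes sV$ (from \eqref{KDCWeightDecomp}) it is just (a sign times) the image of $x$ written as $\sum x'\otimes x''\in V\otimes V$ reinterpreted in $sV\otimes sV$; applying $\mu\circ(\kappa\otimes\kappa)$ then desuspends both legs and multiplies in $A$, yielding the image of $x\in V^{\otimes 2}$ in $A = T(V)/(R)$, up to a sign from the suspension bookkeeping. Meanwhile $\tilde\phi(s^2x) = s(\phi(x))\in sV$ by \eqref{phiqadual}, so $\kappa(\tilde\phi(s^2x))$ is the image of $\phi(x)\in V$ in $A$, and $\tilde\theta(s^2x) = \theta(x)\in k$ by \eqref{thetaqadual}. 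Thus modulo signs the equation on $s^2x$ reads: (image of $x$ in $A$) $-$ (image of $\phi(x)$ in $A$) $=$ $\theta(x)\cdot 1_A$, i.e. $\overline{x - \phi(x) + \theta(x)} = 0$ in $A$, which holds precisely because $x - \phi(x) + \theta(x)\in R$ by the graph description \eqref{QLCgraph}, and $R$ maps to zero in $A = T(V)/(R)$.

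The main obstacle is bookkeeping rather than conceptual: getting every Koszul sign right in the passage between $V^{\otimes 2}$, $s^2qR$, and $sV\otimes sV$ — in particular fixing the sign conventions for the suspension isomorphism $s^2 V\otimes V \cong sV\otimes sV$ and for the convolution product $\kappa\ast\kappa$ in degree $(-1)$ — so that the three terms assemble exactly into the element $x-\phi(x)+\theta(x)$ of $R$ with matching coefficients, and so that the contributions in weights $0$, $1$, and $\geqslant 3$ genuinely vanish on both sides (here one should double-check that the coaugmentation and the conilpotency of $(qA)^\dual$ make $u\circ\tilde\theta$ and $\partial(\kappa)$ agree on $k$, both being zero). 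Once the weight-$2$ identity is matched with \eqref{QLCgraph}, the curved Maurer-Cartan equation — and hence the claim that $\kappa\in\CTw((qA)^\dual,\,A)$ — follows.
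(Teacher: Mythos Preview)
Your approach is correct and is essentially the same as the paper's own proof: both reduce the curved Maurer--Cartan equation $\kappa\circ\tilde\phi + \kappa\ast\kappa = u\circ\tilde\theta$ to a check on the weight-$2$ piece $s^2qR$, where it becomes the relation $-x+\phi(x)=\theta(x)$ in $A$, i.e.\ $x-\phi(x)+\theta(x)\in R$ from the graph description \eqref{QLCgraph}. The paper is simply terser, asserting in one sentence that ``$\kappa\circ\tilde\phi + \kappa\ast\kappa$ is zero everywhere except on $s^2qR$'' and then reading off the weight-$2$ value; your explicit weight-by-weight analysis (weights $0$, $1$, $\geqslant 3$ vanishing, weight $2$ carrying the content) is the unpacked version of that sentence.

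One small slip: both sides of the equation are maps of degree $-2$, not $-1$ (each of $\kappa\circ\tilde\phi$, $\kappa\ast\kappa$, and $u\circ\tilde\theta$ drops degree by $2$). This does not affect the argument.
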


\begin{proof}
Notice that the map $\kappa\circ \tilde{\phi}+\kappa\ast \kappa$ is zero everywhere except on $s^2qR$, and its image is precisely $\{-x+\phi(x)\,\mid\,x\in qR\}$, which equals $\{\theta(x)\,\mid\,x\in qR\}$ in $A$.
Therefore, one has 
\begin{equation}\label{CMCkappa}
\kappa\circ \tilde{\phi}+\kappa\ast \kappa\,=\,u\circ \tilde{\theta}\,,
\end{equation}
which implies that $\kappa$ is a curved twisting morphism.
\end{proof}

By Proposition \ref{CobarCTwBijection}, $\kappa$ induces a morphism of semi-augmented algebras $g_\kappa\,:\,\cb(C)\rightarrow A$, where $C=((qA)^\dual,\,\tilde{\phi},\,\tilde{\theta})$ is the  Koszul dual curved coalgebra. Recall from \cite[Thm. 3.4.4]{LV}, the quadratic algebra $qA$ is called {\it Koszul} if the canonical projection from $(\cb((qA)^{\dual}),\,d_2)$ to $qA$, induced by
$$(qA)^{\dual}=C(sV,\,s^2qR)\twoheadrightarrow sV \xrightarrow{s^{-1}}V\hookrightarrow qA=A(V,\,qR)\,,$$
is a quasi-isomorphism, where $d_2$ is the differential induced by the reduced comultiplication on $\bar{C}$. The following theorem is a generalization of \cite[Thm. 3.6.3]{LV} to the case of QLC algebras.
\begin{theorem}\label{KoszulFreeRes}
Let $A$ be a QLC algebra, and $C=((qA)^\dual,\,\tilde{\phi},\,\tilde{\theta})$ be its Koszul dual curved coalgebra. Suppose $qA$ is Koszul, then the morphism of DG algebras $g_\kappa\,:\,\cb(C)\rightarrow A$ is a quasi-isomorphism.
\end{theorem}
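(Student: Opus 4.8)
The plan is to reduce the statement to the known uncurved case \cite[Thm. 3.6.3]{LV} by a filtration argument, exploiting the fact that the curvature and linear terms are ``lower order'' with respect to the weight grading on $(qA)^\dual$. First I would set up a decreasing filtration on the cobar construction $\cb(C) = T(\bar C[-1])$: put on $\cb(C)$ the grading in which $s^{-1}c$ for $c\in (qA)^{\dual,(n)}$ has weight $n$, and take $F^p\cb(C)$ to be the span of tensors $(s^{-1}c_1,\dots,s^{-1}c_m)$ with total weight $\geqslant p$. The three pieces of the cobar differential interact with this filtration as follows: $d_2$ preserves weight (since $\bar\Delta$ preserves the weight grading on the Koszul dual coalgebra), $d_1 = \tilde\phi$ drops weight by exactly $1$, and $d_0 = \tilde\theta$ drops weight by exactly $2$. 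So $F^\bullet$ is a filtration by subcomplexes and the associated graded differential is exactly $d_2$, i.e. $\gr\cb(C) \cong (\cb((qA)^\dual), d_2)$ as complexes (with a shift/regrading bookkeeping).

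Next I would run the associated spectral sequence. On $A$ I put the analogous filtration coming from the weight grading $A = T(V)/(R)$ descended from $T(V)$ — more precisely the increasing filtration by number of tensor factors, reindexed to a decreasing one, for which $\gr A$ is a quotient of $qA$; in fact condition \eqref{qlca2} is exactly what guarantees $\gr A \cong qA$ as graded algebras. The morphism $g_\kappa$ is filtered: on generators it sends $s^{-1}c$ with $c$ of weight $n$ into the weight-$\leqslant 1$ part of $A$, but the only nonzero contribution to the top graded piece comes from $n=1$, which is precisely the map $sV \to V \hookrightarrow qA$ defining the Koszul map. Hence $\gr g_\kappa$ is, up to the regrading, the canonical projection $(\cb((qA)^\dual),d_2) \to qA$. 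By the hypothesis that $qA$ is Koszul, this is a quasi-isomorphism by \cite[Thm. 3.6.3]{LV} (or rather its one-sided cobar version, \cite[Thm. 3.4.4]{LV} together with the standard argument). A comparison-of-spectral-sequences / Eilenberg–Moore type argument then upgrades this to the statement that $g_\kappa$ itself is a quasi-isomorphism, provided the filtrations are exhaustive and bounded below in each fixed total degree so that the spectral sequences converge.

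To make the convergence work I would use the non-negative grading of $V$: since $V$ is non-negatively graded and the weight grading is also non-negative, in any fixed homological degree only finitely many bidegrees contribute, so both filtrations are bounded and the spectral sequences converge strongly; the isomorphism on $E^1$ (equivalently $E^2$, depending on indexing) then forces $g_\kappa$ to be a quasi-isomorphism. I would also need to note at the outset that $\cb(C)$ is a well-defined DG algebra (Proposition \ref{CobarDGA}), that $g_\kappa$ is a genuine morphism of DG algebras (Proposition \ref{CobarCTwBijection} applied to the curved twisting morphism $\kappa$ of Lemma \ref{KoszulCTW}), and that the coaugmentation hypotheses needed for the cobar construction hold because $(qA)^\dual$ is non-negatively graded.

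The main obstacle I expect is purely bookkeeping: getting the three gradings (homological degree, internal degree of $V$, and weight) to line up so that the filtration is genuinely by subcomplexes, the associated graded differential is exactly $d_2$ with no leftover $d_0, d_1$ contributions, and the spectral sequence is concentrated in the range where convergence is automatic. In particular one must check that $d_0$ and $d_1$ strictly decrease the filtration degree (not merely preserve or weakly decrease it) so that they vanish on the associated graded; this is where the minimality condition \eqref{qlca1} and the precise definitions \eqref{phiqadual}, \eqref{thetaqadual} of $\tilde\phi,\tilde\theta$ as landing one resp. two weights lower are used. Once the associated graded is correctly identified with the uncurved Koszul complex, the rest is a black-box appeal to \cite{LV}.
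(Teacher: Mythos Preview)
Your strategy is exactly the paper's: filter both $\cb(C)$ and $A$ by weight, identify the associated graded map with the uncurved Koszul projection $\cb((qA)^\dual)\to qA$, invoke Koszulness, and conclude by spectral sequence comparison. Two points need correction, however.

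First, your filtration is oriented the wrong way. With $F^p=\{\text{total weight}\geqslant p\}$ and $d_1,d_0$ strictly \emph{lowering} weight, an element of weight exactly $p$ is sent by $d_1$ to weight $p-1\notin F^p$; so $F^p$ is \emph{not} a subcomplex and there is no spectral sequence to speak of. The paper uses the increasing filtration $F_r=\{\text{weight}\leqslant r\}$, for which $d_2(F_r)\subset F_r$, $d_1(F_r)\subset F_{r-1}$, $d_0(F_r)\subset F_{r-2}$, so that $d_1,d_0$ vanish on the associated graded and only $d_2$ survives. This is the ``bookkeeping'' you anticipated, but as written your setup does not work.

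Second, your claim that condition~\eqref{qlca2} ``is exactly what guarantees $\gr A\cong qA$'' is false: \eqref{qlca2} only controls relations up to weight~$2$. In general $\gr A$ is merely a \emph{quotient} of $qA$, and the isomorphism $\gr A\cong qA$ is the nonhomogeneous PBW theorem (Proposition~\ref{PBWThm}), whose proof genuinely uses the Koszulness of $qA$. The paper invokes this proposition explicitly to identify $E'^0$. Without it you cannot conclude that the $E^0$-map is the Koszul projection onto $qA$ rather than onto some proper quotient, and the argument collapses.

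Once you flip the filtration to increasing and replace the appeal to \eqref{qlca2} by an appeal to Proposition~\ref{PBWThm}, your argument coincides with the paper's.
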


Notice that $A$ is equipped with a weight filtration $F_r'$  induced by the weight filtration on $T(V)$, i.e.,  $F_r'$ is given by the image of $T^{\leqslant r}(V)$ under the canonical projection $T(V) \twoheadrightarrow A$. Let $\mathrm{gr}A$ denote the graded algebra associated to this filtration. The following proposition is based on Lemma \ref{PP05LemmaCC} and equivalent characterizations of the Koszulness \cite[Ch. 2 Thm. 4.1]{PP05}.
\begin{proposition}\cite[Ch. 5 Thm 2.1]{PP05}\label{PBWThm}
Let $A=A(V,\,R)$ be a QLC algebra, and $qA$ be the associated quadratic algebra. Suppose $qA$ is Koszul, then $\mathrm{gr}A$ is isomorphic to $qA$.
\end{proposition}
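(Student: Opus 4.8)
The plan is to deduce Proposition \ref{PBWThm} from the quasi-isomorphism established in Theorem \ref{KoszulFreeRes} by comparing the weight filtration on $A$ with the weight grading on the cobar construction of the Koszul dual. First I would set up the associated graded objects carefully. The weight filtration $F_r'$ on $A$ is, by construction, the image of $T^{\leqslant r}(V)$ under the surjection $T(V)\twoheadrightarrow A$; its associated graded $\mathrm{gr}A$ is automatically a quadratic-type quotient of $T(V)$ since the filtration is multiplicative, so there is a canonical surjection $\pi\,:\,qA\twoheadrightarrow \mathrm{gr}A$ — indeed the relations of $\mathrm{gr}A$ contain $qR$ because every element of $R$ has the form $x-\phi(x)+\theta(x)$ with $x\in qR$, so $x\equiv 0$ in $\mathrm{gr}A$ after passing to top weight. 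The content of the proposition is that this surjection is an isomorphism, i.e. that the passage from $R$ to $qR$ does not collapse $qA$ any further; equivalently, that the ideal $(R)$ intersected with weight-$\leqslant r$ tensors, modulo weight $\leqslant r-1$, is exactly $\sum V^{\otimes i}\otimes qR\otimes V^{\otimes j}$.

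The key step is a dimension-counting / spectral sequence argument. I would equip the cobar construction $\cb(C)$ with the weight grading coming from the weight decomposition \eqref{KDCWeightDecomp} of $(qA)^\dual$, and observe that the three components $d_0,d_1,d_2$ of its differential have weights $-1,0,+1$ respectively relative to a suitable total weight — more precisely, $d_2$ preserves the internal weight while $d_1$ (induced by $\tilde\phi$, which lowers weight by one) and $d_0$ (induced by $\tilde\theta$, which lowers weight by two) are strictly decreasing. This gives a filtration of $\cb(C)$ by weight whose associated graded complex is exactly $(\cb((qA)^\dual),d_2)$, the classical Koszul-type cobar complex of the quadratic algebra $qA$. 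On the other side, the map $g_\kappa\,:\,\cb(C)\to A$ is filtered for the weight filtration $F_r'$ on $A$ (this should be checked directly from the explicit formula for $g_\kappa$ on generators and the fact that a word $s^{-1}c_1\cdots s^{-1}c_n$ of total internal weight $w$ maps into $F'_{w}$). So $g_\kappa$ induces a morphism of spectral sequences, and on the $E^1$-page it becomes the classical cobar map $(\cb((qA)^\dual),d_2)\to \mathrm{gr}A$ composed with $\pi$; wait — more carefully, on associated graded it is the composite of the classical Koszul map $(\cb((qA)^\dual),d_2)\to qA$ with the surjection $qA\twoheadrightarrow\mathrm{gr}A$.

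Now I would invoke the Koszulness hypothesis: by definition, $(\cb((qA)^\dual),d_2)\xrightarrow{\sim} qA$ is a quasi-isomorphism, so in homology the classical cobar map is an isomorphism onto $qA$ (concentrated in homological degree $0$ since $qA$ is a plain algebra). Meanwhile Theorem \ref{KoszulFreeRes} says $g_\kappa$ is a quasi-isomorphism, so the abutment of the source spectral sequence is $H_*(A)=A$ in degree $0$. Comparing: the $E^1$-term of the source filtration is $qA$, the $E^1$-term of the target filtration is $\mathrm{gr}A$ (concentrated in degree $0$), and the induced map $E^1\to E^1$ is the surjection $\pi$. Since both spectral sequences converge and the total map $g_\kappa$ is a quasi-isomorphism, a standard comparison (the filtered complexes are both bounded-below in a way ensuring convergence because $V$ is non-negatively graded and the weight grading is by $\mathbb N$) forces $\pi$ to be an isomorphism on the $E^1$-page, which is precisely the statement $\mathrm{gr}A\cong qA$. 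I expect the main obstacle to be the bookkeeping around convergence of the two weight spectral sequences and the verification that $g_\kappa$ is genuinely filtration-preserving with the claimed associated-graded map — in particular making sure the weight grading on $\cb(C)$ interacts correctly with the homological grading so that the source spectral sequence is first-quadrant (or otherwise regular enough), which is where the non-negativity and local finiteness of $V$ enter. An alternative, perhaps cleaner, route that avoids spectral sequences is to argue directly with the filtered quasi-isomorphism: since $g_\kappa$ is a filtered quasi-isomorphism between filtered complexes whose filtrations are exhaustive and bounded below, and since we already know the associated graded map $\mathrm{gr}(g_\kappa)$ is surjective (as $\pi$ is), a diagram chase shows $\mathrm{gr}(g_\kappa)$ is injective as well, hence $\pi$ is an isomorphism; I would present whichever of these two formulations makes the convergence hypotheses most transparent given the running assumptions on $V$.
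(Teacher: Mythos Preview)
Your argument is circular. You invoke Theorem \ref{KoszulFreeRes} to deduce Proposition \ref{PBWThm}, but look at the proof of Theorem \ref{KoszulFreeRes} in the paper: the identification $E'^{0}_{rs}\cong qA$ (with weight $r$ and degree $r+s$) is obtained \emph{by applying Proposition \ref{PBWThm}}. In the logical structure of this paper, the PBW isomorphism is an external input imported from \cite[Ch.\ 5, Thm.\ 2.1]{PP05} --- whose proof uses Lemma \ref{PP05LemmaCC} together with the lattice/distributivity characterization of Koszulness \cite[Ch.\ 2, Thm.\ 4.1]{PP05} --- and Theorem \ref{KoszulFreeRes} is built on top of it, not the other way around.

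Independently of the circularity, the spectral-sequence step you call ``a standard comparison'' is not one. The comparison theorem \cite[Thm.\ 5.2.12]{Weibel94} goes from an isomorphism on some page $E^r$ to an isomorphism on abutments, not the reverse; a filtered quasi-isomorphism can perfectly well fail to induce an isomorphism on $E^1$. Your alternative (``filtered quasi-iso plus $\mathrm{gr}$ surjective implies $\mathrm{gr}$ injective'') has the same problem, and in any case $\mathrm{gr}(g_\kappa)$ at the $E^0$ level is the map $\cb((qA)^\dual)\to\mathrm{gr}A$, which is obviously not injective. A non-circular cobar argument does exist --- compute $H_0(\cb(C))\cong A$ directly (the degree-$0$ part of $\cb(C)$ is $T(V)$ and the image of $d$ from degree $1$ is precisely the ideal $(R)$), then argue that the weight spectral sequence of $\cb(C)$ degenerates at $E^1$ so that $qA=E^1=E^\infty=\mathrm{gr}A$ --- but that is not what you wrote, and the degeneration step still needs justification when $V$ is merely non-negatively graded rather than concentrated in degree $0$.
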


\begin{remark}
Proposition \ref{PBWThm} is called the nonhomogeneous Poincar\'{e}-Birkhoff-Witt (PBW) theorem in \cite{PP05}. For example, let $A=\U(\mfa)$, the universal enveloping algebra of a finite dimensional Lie algebra $\mfa$. Then $\Sym(\mfa) = qA \cong \mathrm{gr}A$ is nothing but the classical Poincar\'{e}-Birkhoff-Witt theorem for Lie algebra $\mfa$.
\end{remark}

\begin{proof}[Proof of Theorem \ref{KoszulFreeRes}]
Since $C$ is weight graded, the graded algebra $T(\bar{C}[-1])$ is weight graded as well. We consider the filtration $F_r$ of $\cb(C)$ defined by its weights: the elements of $F_r$ are the elements of weight less than or equal to $r$. It is clear that $g_\kappa$ is morphism of filtered complexes. 

Let $E_{rs}^\bullet$ (resp., $E_{rs}^{' \bullet}$) be the spectral sequence associated to the weight filtration on $\cb(C)$ (resp., weight filtration on $A$). Since both filtrations are bounded below and exhaustive, the classical convergence theorem of spectral sequences \cite[Thm. 5.5.1]{Weibel94} implies that $E_{rs}^\bullet$ and $E_{rs}^{' \bullet}$ converge to $\H_\bullet(\cb(C))$ and $\H_\bullet(A)=A$, respectively, and the map induced by $g_\kappa$ on homologies 
$$\H_\bullet(g_\kappa)\,:\,\H_\bullet(\cb(C))\,\rightarrow\,A$$
is compatible with the corresponding map between $\{E_{rs}^\bullet\}$ and $\{E_{rs}^{' \bullet}\}$.

$E_{rs}^{0}$ is made up of the elements in $\cb(C)$ of degree equal to $r+s$ and weight equal to $r$. The three components of the differential map $d_{\cb(C)}=d_0+d_1+d_2$ satisfy
$$d_2\,:\,F_r\rightarrow F_r \qquad d_1\,:\,F_r\rightarrow F_{r-1} \qquad d_0\,:\,F_r\rightarrow F_{r-2}\,.$$
Thus, the differential $d^0$ on $E_{rs}^{0}$ is given by $d_2$. By proposition \ref{PBWThm}, $E_{rs}^{' 0}$ is made up of the elements in $qA$ of degree equal to $r+s$ and weight equal to $r$. It is easy to see that the corresponding map from $E^0_{rs}$ to $E_{rs}^{' 0}$ is precisely the canonical projection from $\cb((qA)^{\dual})$ to $qA$, which is a quasi-isomorphism for $qA$ is Koszul. Thus, the corresponding map from $E^1_{rs}$ to $E_{rs}^{' 1}$ is an isomorphism. The desired theorem follows from the comparison theorem of spectral sequences \cite[Thm. 5.2.12]{Weibel94}.
\end{proof}

\begin{remark}
Polishchuk and Positselski have a similar result in \cite{PP05}. However, their definition of cobar constructions is slight different than what we have. Those two definitions are equivalent if we assume that $V$ is a finite dimensional vector space. 
\end{remark}

Showing the condition \eqref{qlca2} could be difficult. However, if $qA$ is Koszul, it suffices to check a weaker condition:

In the proof of Lemma \ref{PP05LemmaCC}, the authors only use the following condition
\begin{equation}\label{qlca2W}
(V\otimes R\oplus R\otimes V)\cap (k\oplus V\oplus V^{\otimes 2})\,\subset\,R\,,
\end{equation}
which is weaker than \eqref{qlca2}. Moreover, if $qA$ is Koszul, we still have the PBW isomorphism
$$\mathrm{gr}A\,\cong\,qA\,.$$
It follows from the weight $0$ and $1$ components of this isomorphism that 
\begin{equation}\label{qlca1S}
(R)\cap (k\oplus V)=\{0\}\,.
\end{equation}
Additionally, in weight $2$, the PBW isomorphism implies that
\begin{equation}\label{qlca2W2}
qR\,=\,q\big((R)\cap (k\oplus V\oplus V^{\otimes 2})\big)\,.
\end{equation}
These two conditions imply \eqref{qlca2}. Indeed, if $R\subsetneq \big((R)\cap (k\oplus V\oplus V^{\otimes 2})\big)$, by \eqref{qlca2W2}, one can find $x,\,y \in (R)\cap (k\oplus V\oplus V^{\otimes 2})$ such that $x\neq y$ but $q(x)=q(y)$. Then $0\neq x-y \in (R)\cap (k\oplus V)$, which contradicts the condition \eqref{qlca1S}.

\subsection{Koszul resolution}
Let $A$ be a QLC algebra satisfying the conditions \eqref{qlca1} and \eqref{qlca2} (or equivalently, \eqref{qlca2W}). By Proposition \ref{KoszulCurCAg} and Lemma \ref{KoszulCTW}, the Koszul dual coalgebra $(qA)^\dual$ of $A$ is a curved DG coalgebra and there is a curved twisting morphism $\kappa\,:\,(qA)^\dual\rightarrow A$. In addition, by Proposition \ref{TwTensorBimod}, there is a chain complex of  $A$-bimodules $A\otimes_\kappa (qA)^\dual\otimes_\kappa A$, called the {\it total Koszul complex} of the QLC algebra $A$. The following theorem generalizes the Koszul resolution of quadratic algebras to QLC algebras.

\begin{theorem}\label{KoszulRes}
Let $A$ be a QLC algebra, and $(qA)^\dual$ be its Koszul dual curved coalgebra. Suppose $qA$ is Koszul, then the total Koszul complex $A\otimes_\kappa(qA)^\dual\otimes_\kappa A$ is a resolution of $A$ in the category of $A$-bimodules. 
\end{theorem}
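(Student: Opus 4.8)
The plan is to deduce Theorem \ref{KoszulRes} from Theorem \ref{KoszulFreeRes} by a standard bar-resolution / two-sided Koszul complex comparison argument. First I would observe that, by Proposition \ref{TwTensorBimod}, the total Koszul complex $K\defeq A\otimes_\kappa(qA)^\dual\otimes_\kappa A$ is indeed a complex of $A$-bimodules, and the counit $\epsilon\colon(qA)^\dual\to k$ together with the multiplication of $A$ induces an $A$-bimodule map $K\to A$, $a\otimes c\otimes a'\mapsto \epsilon(c)\,aa'$; the content of the theorem is that this augmentation is a quasi-isomorphism. Equivalently, writing $R=\cb(C)$ for the cobar construction of $C=((qA)^\dual,\tilde\phi,\tilde\theta)$, I would compare $K$ with the normalized two-sided bar construction $\mathrm{B}(R,R,R)$ (or $R\otimes_{R^e}$-type resolution). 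Since $R$ is a semi-free, hence cofibrant, DG algebra, $\mathrm{B}(R,R,R)\xrightarrow{\sim}R$ is the canonical cofibrant replacement of $R$ as an $R$-bimodule, and by Theorem \ref{KoszulFreeRes} the quasi-isomorphism $g_\kappa\colon R\xrightarrow{\sim}A$ makes this a cofibrant $A$-bimodule resolution as well.

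The key step is to produce an explicit chain map $K\to \mathrm{B}(R,R,R)$ (or, more efficiently, directly relate $K$ to the twisted tensor product $R\otimes_\kappa C\otimes_\kappa R$) covering the respective augmentations to $A$, and then show it is a quasi-isomorphism by a filtration/spectral sequence argument analogous to the one in the proof of Theorem \ref{KoszulFreeRes}. Concretely: the curved twisting morphism $\kappa$ factors as $(qA)^\dual\xrightarrow{\iota}\cb((qA)^\dual)=R\xrightarrow{g_\kappa}A$, where $\iota\colon C\to R$ is the universal twisting morphism $s^{-1}\colon\bar C\to\bar C[-1]\subset R$ (one checks $\iota$ is itself a curved twisting morphism, using that $d_0+d_1+d_2$ on $R$ encodes exactly the curved MC relation). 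Hence there is a morphism of twisted tensor products $R\otimes_\iota C\otimes_\iota R\to A\otimes_\kappa C\otimes_\kappa A=K$ induced by $g_\kappa$ on the two outer factors, and I claim $R\otimes_\iota C\otimes_\iota R$ is quasi-isomorphic to $\mathrm{B}(R,R,R)$ by the same mechanism that proves $\cb\mathrm{B}\simeq\mathrm{id}$ in the uncurved theory. Equivalently, I would just show directly that the augmentation $K\to A$ is a quasi-isomorphism: both $K$ and the bar resolution $\mathrm B(A,A,A)$ carry the weight filtration coming from the weight grading on $(qA)^\dual$ (together with the tensor-word filtration), and on the associated graded one recovers the statement that the (uncurved!) Koszul complex $qA\otimes_{\kappa_0}(qA)^\dual\otimes_{\kappa_0}qA\to qA$ is a resolution — which holds precisely because $qA$ is Koszul (\cite[Thm.~3.6.3 / 4.x]{LV}) — combined with the PBW isomorphism $\mathrm{gr}\,A\cong qA$ of Proposition \ref{PBWThm}. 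A comparison theorem for spectral sequences \cite[Thm.~5.2.12]{Weibel94}, together with boundedness-below and exhaustiveness of the filtration, then forces $\H_\bullet(K\to A)$ to be an isomorphism.

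So the proof structure I would write out is: (i) recall $K$ is an $A$-bimodule complex with augmentation $K\to A$; (ii) introduce the weight filtration $F_r K$ induced by the weight decomposition \eqref{KDCWeightDecomp} of $(qA)^\dual$ together with the weight filtration $F_r'$ on $A$, and check the differential $d_{A\otimes C\otimes A}+\id_A\otimes d_\kappa^r-d_\kappa^l\otimes\id_A$ decomposes as $d^{(0)}+d^{(\le -1)}$ where $d^{(0)}$ only involves the weight-$0$ pieces — i.e. on the associated graded the curvature and linear parts drop out and one is left with the Koszul-type differential built from the reduced coproduct on $(qA)^\dual$ and multiplication in $\mathrm{gr}\,A\cong qA$; (iii) identify $E^0$ with the classical two-sided Koszul complex $qA\otimes_{\kappa_0}(qA)^\dual\otimes_{\kappa_0}qA$ and invoke Koszulness of $qA$ to get that $E^1=0$ in positive homological degree and $E^1\cong qA\cong\mathrm{gr}\,A$ in degree $0$, compatibly with the augmentation; (iv) conclude via the convergence and comparison theorems for spectral sequences, exactly as in the proof of Theorem \ref{KoszulFreeRes}.

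The main obstacle I anticipate is bookkeeping the signs and the interaction of the curvature term $\tilde\theta$ with the bimodule differential when setting up the filtration — one must be careful that $d^{(0)}$ on $E^0$ is genuinely the uncurved Koszul differential and that the pieces $\id_A\otimes d_{u\circ h}^r$ and $d_{u\circ h}^l\otimes\id_A$ appearing in the proof of Proposition \ref{TwTensorBimod} (which encode $d_C^2$) land in strictly lower filtration, so that they do not contribute on the associated graded; verifying this uses \eqref{thetaqadual}, namely that $\tilde\theta$ is supported in weight $2$ and lowers weight by $2$, and likewise $\tilde\phi$ lowers weight by $1$, while the Koszul differential $d_2$-part preserves total weight. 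Granting that, everything reduces cleanly to the known quadratic Koszul resolution plus the nonhomogeneous PBW theorem.
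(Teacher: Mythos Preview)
Your outlined argument (i)--(iv) is correct and is essentially the paper's proof: define the augmentation $\xi\colon K\to A$ via $\id\otimes\epsilon\otimes\id$ followed by $\mu$, filter both sides by total weight (using the weight filtration on $A$ and on $(qA)^\dual$), identify the $E^0$-page with the uncurved two-sided Koszul complex $qA\otimes_{\alpha}(qA)^\dual\otimes_{\alpha}qA\to qA$ via Proposition~\ref{PBWThm}, invoke Koszulness of $qA$ (the paper cites \cite[Ex.~2.7.6]{LV}), and conclude by the spectral sequence comparison theorem. The earlier detour through $R=\cb(C)$, $R\otimes_\iota C\otimes_\iota R$, and the bar resolution $\mathrm{B}(R,R,R)$ is unnecessary and the paper does not take it; you can drop that discussion entirely and go straight to the filtration argument.
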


\begin{proof}
Consider the following composition
$$\xi\,:\,A\otimes_\kappa (qA)^\dual\otimes_\kappa A\,\xrightarrow{\id\otimes \epsilon\otimes\id}\,A\otimes k\otimes A\,\cong\,A\otimes A\,\xrightarrow{\mu}\,A\,.$$
It is straightforward to check that $\xi$ is a morphism of $A$-bimodules. We are going to show that this morphism is a quasi-isomorphism.

The weight filtrations on $A$ and $(qA)^\dual$ induce a filtration on $A\otimes_\kappa\otimes (qA)^\dual\otimes_\kappa A$:
$$F_r\big(A\otimes_\kappa(qA)^\dual\otimes_\kappa A\big)\,=\,\sum_{r_1+r_2+r_3=r}F_{r_1}A\otimes F_{r_2}(qA)^\dual\otimes F_{r_3}A\,.$$
Let $E_{rs}^\bullet$ (resp., $E_{rs}^{' \bullet}$) be the spectral sequence associated to the filtration on $A\otimes_\kappa\otimes (qA)^\dual\otimes_\kappa A$ (resp., weight filtration on $A$). Since both filtrations are bounded below and exhaustive, and $\xi$ is a morphism of filtered complexes, by the classical convergence theorem of spectral sequences, the induced map on homologies 
$$\H_\bullet(\xi)\,:\,\H_\bullet[A\otimes_\kappa\otimes (qA)^\dual\otimes_\kappa A]\,\rightarrow\,\H_\bullet(A)\,=\,A$$
is compatible with the corresponding map between $\{E_{rs}^\bullet\}$ and $\{E_{rs}^{' \bullet}\}$.

Since 
$$\mathrm{gr}F_\bullet\big(A\otimes (qA)^\dual\otimes A\big)\,\cong\,\mathrm{gr}A\otimes \mathrm{gr}(qA)^\dual\otimes \mathrm{gr}A\,\cong\,qA\otimes (qA)^\dual\otimes qA\,,$$
$E^0_{rs}$ is made up the elements of $qA\otimes (qA)^\dual\otimes qA$ with degree euqal to $r+s$ and total weight equal to $r$. Recall the differential on $A\otimes_\kappa(qA)^\dual\otimes_\kappa A$ is given by
$$d_{C}+\id_{A}\otimes d_{\kappa}^{r}-d_{\kappa}^{l}\otimes\id_{A}\,,$$
where $d_C$ is induced by $\tilde{\phi}$ which decreases the filtration by $1$. Since $\kappa$ preserves the weight filtrations, the differential on the $E^0_{rs}$ is given by $\id_{qA}\otimes d_{\alpha}^{r}-d_{\alpha}^{l}\otimes\id_{qA}$, where $\alpha$ is the canonical twisting morphism from $(qA)^\dual$ to $qA$. Moreover, the corresponding map from $E^0_{rs}$ to $E_{rs}^{' 0}$ is given by the following composition
$$qA\otimes_\alpha (qA)^\dual\otimes_\alpha qA\,\xrightarrow{\id\otimes \epsilon\otimes\id}\,qA\otimes k\otimes qA\,\cong\,qA\otimes qA\,\xrightarrow{\mu}\,qA\,.$$
Since $qA$ is Koszul, the map above is a quasi-isomorphism by \cite[Ex. 2.7.6]{LV}. Thus the corresponding map from $E^1_{rs}$ to $E_{rs}^{' 1}$ is an isomorphism. It follows from the comparison theorem of spectral sequences that $\H_\bullet(\xi)$ is an isomorphism. 
\end{proof}

\begin{example}[The Weyl algebra]
Let $V$ be the two dimensional vector space generated by $\{x,\,y\}$, and $R$ is the one dimensional space generated by $\{yx-xy-1\}$. It is clear that both conditions \eqref{qlca1} and \eqref{qlca2W} are satisfied. $A=T(V)/(R)$ is the first Weyl algebra. Since $qR$ is the one dimensional space generated by $\{yx-xy\}$, $qA$ is the symmetric algebra $\Sym(V)$ which is Koszul. The Koszul dual coalgebra is $(qA)^\dual=\bSym^c(sV)$, the cofree cocommutative coalgebra over $sV$. As  vector spaces, $\bSym^c(sV)\cong \Lambda^\bullet (V)$. Thus the free resolution provided by Theorem \ref{KoszulRes} can be written as
\begin{equation}\label{KoszulResWeyl1}
A\otimes \Lambda^2 V\otimes A\,\xrightarrow{\partial_2}\,A\otimes V\otimes A\,\xrightarrow{\partial_1}\,A\otimes A\,,
\end{equation}
where the differentials are given by
$$\partial_2(1\otimes x\wedge y\otimes 1)\,=\,-1\otimes x\otimes y +1\otimes y\otimes x-x\otimes y\otimes 1+y\otimes x\otimes 1\,,$$
$$\partial_1(1\otimes x\otimes 1)\,=\,1\otimes x-x\otimes 1\qquad\text{and}\qquad \partial_1(1\otimes y\otimes 1)\,=\,1\otimes y-y\otimes 1\,.$$
This is the length-two resolution constructed in \cite[Sec. 1]{Kassel92}, which can be used to compute the Hochschild homology of the Weyl algebra $A$ (see \cite[Sec. 5.10]{Kassel06}). 

The Weyl algebra is a special case of the next example.
\end{example}

\begin{example}[The universal enveloping algebra of a ``unital'' Lie algebra]
Let 
$$0\rightarrow k\cdot a \rightarrow \tilde{\g} \rightarrow \g \rightarrow 0$$ 
be a central extension of a finite dimensional Lie algebra $\g$ by a trivial $\g$-module $k\cdot a \cong k$. It is known that the equivalence classes of central extensions of $\g$ by the trivial $\g$-module $k$ is in one-to-one correspondence with the second Lie cohomology group of $\g$ with trivial coefficients (see, for example, \cite[Ch. 7]{Weibel94}). Let $\U\tilde{\g}$ be the universal enveloping algebra of $\tilde{\g}$ and consider the quotient $A=\U\tilde{\g}/(a-1)$. It can be shown that $A$ is a QLC algebra with $\g$ as the space of generators and $R$ is generated by
\begin{equation}\label{UUEArelations}
x\otimes y -y\otimes x-[x,\,y]-h(x,\,y)\,,
\end{equation}
where $x,\,y\in \g$ and $h(x,\,y)$ is a $2$-cocycle describing the extension.

Let $\mathit{Ass}$ be the binary quadratic operad governing nonunital associative algebras, and $\mathit{Lie}$ be the binary quadratic operad governing Lie algebras. Let $\mu\in \mathit{Ass}(2)$ be the generator of $\mathit{Ass}$ and $\nu\in \mathit{Lie}(2)$ be the generator of $\mathit{Lie}$. Recall that there is a morphism of operads from $\mathit{Lie}$ to $\mathit{Ass}$ given by $\nu\mapsto \mu-\mu^{(12)}$, which induces the Lie algebra functor ${\mathcal Lie}$ from the category of associative algebras to the category of Lie algebras. 

The notion of a unital version of a binary quadratic operad was introduced by Hirsh and Mill\`{e}s in \cite{HM12}. Let $\mathit{uAss}$ be a operad with generators $u\in \mathit{uAss}(0)$ and $\mu\in \mathit{uAss}(2)$ and relations $\mu\circ_1 \mu - \mu\circ_2 \mu$, $\mu\circ_1 u-\id$, and $\mu\circ_2 u-\id$. Then $\mathit{uAss}$, which is a unital version of $\mathit{Ass}$, governs unital associative algebras. Similarly, let $\mathit{cLie}$ be a operad with generators $c\in \mathit{cLie}(0)$ and $\nu\in \mathit{cLie}(2)$ satisfying $\nu^{(12)}=-\nu$ and relations $\nu\circ_1 \nu+(\nu\circ_1 \nu)^{(123)}+(\nu\circ_1 \nu)^{(132)}$ and $\nu\circ_1 c$. Then $\mathit{cLie}$, which is a unital version of $\mathit{Lie}$, governs Lie algebras with a designated central element. $\tilde{\g}$ is an example of a $\mathit{cLie}$-algebra with $a$ being the designated central element. By abuse of terminology we call $\tilde{\g}$ a {\it unital} Lie algebra and view $a\in \tilde{\g}$ as the ``unit''. The morphism of operads from $\mathit{cLie}$ to $\mathit{uAss}$ given by $\nu\mapsto \mu-\mu^{(12)}$ and $c\mapsto u$ induces the unital Lie algebra functor ${u\mathcal Lie}$ from the category of unital associative algebras to the category of Lie algebras with a designated central element such that the unit is sent to the designated central element. One can mimic the construction of universal enveloping algebras and show that ${u\mathcal Lie}$ has a left adjoint functor, and the image of $\tilde{\g}$ under this functor is precisely $A$. Therefore, we can view $A$ as the universal enveloping algebra of the ``unital'' Lie algebra $\tilde{\g}$. We refer to \cite{Sridharan61} for an interpretation of $A$ in Lie algebra representations.

The Koszul dual coalgebra $(qA)^\dual$ of $A$ is the Chevalley-Eilenberg DG coalgebra of $\g$, and the corresponding curved twisting morphism $\kappa\,:\,(qA)^\dual\rightarrow A$ is induced by $\g[1] \rightarrow \g$. The relation \eqref{UUEArelations} implies that $A$ is a right $\g$-module with the action given by 
$$m\cdot g\,=\,mg-gm\,,$$
for $m\in A$ and $g\in \g$.
Using the resolution $A\otimes_\kappa (qA)^\dual \otimes_\kappa A$ provided by Theorem \ref{KoszulRes}, one can identify the Hochschild (co)homology of $A$ with the Lie algebra (co)homology of $\g$ with coefficients in $A$ ({\it cf.} \cite[Prop. 5.3]{Sridharan61} and \cite[Cor. 5]{Kassel88}), that is, 
$$\HH_\bullet(A)\,\cong\,\H_\bullet(\g,\,A)
\qquad\text{and}\qquad 
\HH^\bullet(A,\,A)\,\cong\,\H^\bullet(\g,\,A)\,.$$
\end{example}

\section{Cyclic (co)homology of QLC algebras}\label{secCycHo}
We give the construction of (reduced) cyclic bicomplexes of an augmented curved DG algebra. To the best of our knowledge, this material has not appeared in the literature. The main result in this section is a generalization of a theorem due to Feigin and Tsygan.
\subsection{Hochschild complex}\label{subsecHoch}
Recall that the bar construction $\bB(A)$ of an augmented curved DG algebra $(A,\,\triangledown,\,\Theta)$ is a DG coalgebra. Moreover, the following composition
$$\pi\,:\, \bB(A)\,\twoheadrightarrow\,\bar{A}[1]\,\xrightarrow{s^{-1}}\,\bar{A}\,\hookrightarrow\,A\,,$$
is a curved twisting morphism. By Proposition \ref{CTwTensor}, $_{\pi}A\otimes_{\pi}\bB(A)$ is a chain complex, called the {\it Hochschild complex} of $A$, and the differential is given by the sum of the following three terms:
\begin{equation}\label{HochD0}
d^{H}_{0}(a_{0},\,sa_{1},\,\cdots,\,sa_{n})\,=\,\sum_{i=0}^{ n}(-1)^{|a_{0}|+\cdots+|a_{i}|+i+1}(a_{0},\,sa_{1},\,\cdots,\,sa_{i},\,s\Theta,\,sa_{i+1},\,\cdots,\,sa_{n})\,,
\end{equation}
\begin{eqnarray}\label{HochD1}
d^{H}_{1}(a_{0},\,sa_{1},\,\cdots,\,sa_{n}) &=& (\triangledown(a_{0}),\,sa_{1},\,\cdots,\,sa_{n})\\
&+&\sum_{i=1}^{n}(-1)^{|a_{0}|+\cdots+|a_{i-1}|+i}(a_{0},\,sa_{1},\,\cdots,\,s\triangledown(a_{i}),\,\cdots,\,sa_{n})\,,\nonumber
\end{eqnarray}
and
\begin{eqnarray}\label{HochD2}
d^{H}_{2}(a_{0},\,sa_{1},\,\cdots,\,sa_{n}) &=&(-1)^{|a_{0}|+1}(a_{0}a_{1},\,sa_{2},\,\cdots,\,sa_{n}) \\
&+&\sum_{i=1}^{n-1}(-1)^{|a_{0}|+\cdots+|a_{i}|+i-1}(a_{0},\,sa_{1},\,\cdots,\,s(a_{i}a_{i+1}),\,\cdots,\,sa_{n})\nonumber\\
&+&(-1)^{(|a_{n}|+1)(|a_{0}|+\cdots+|a_{n-1}|+n-1)}(a_{n}a_{0},\,sa_{1},\,\cdots,\,sa_{n-1})\,.\nonumber
\end{eqnarray}
The associated homology is called the {\it Hochschild homology} of the curved DG algebra $A$ and is denoted by $\HH_{\bullet}(A)$. Moreover, the assocaited homology of the subcomplex $_{\pi}\bar{A}\otimes_{\pi}\bB(A)$ is called the {\it reduced Hochschild homology} of the curved DG algebra $A$ and is denoted by $\rHH_{\bullet}(A)$.

\begin{remark}
Given a DG coalgebra $C$, one can construct the DG bicomodule of differentials $\Omega^1(C)$ over $C$ (see \cite[Sec. 4]{Qui89} for details). It can be shown that, up to a shift in degree, $_{\pi}\bar{A}\otimes_{\pi}\bB(A)$  with the differential described above is isomorphic to $\Omega^1(\bB(A))^\n$, where $(\mbox{--})^\n$ denotes the cocommutator subspace of a bicomodule.
\end{remark}

\subsection{Cyclic homology}\label{subsecHC}
Let $\rbB(A)$ be the subspace of $\bB(A)$ whose underlying graded vector space is $\overline{T^{c}}(\bar{A}[1])$, and it is immediate to check that $\rbB(A)$ is a subcomplex of $\bB(A)$. Let $T_n$ denote the cyclic group action on $\bar{A}[1]^{\otimes n}$, i.e.,
\begin{equation}\label{TDef1}
T_n(sa_{1},\,\cdots,\,sa_{n})\,=\,(-1)^{(|a_{n}|+1)(|a_{1}|+\cdots+|a_{n-1}|+n-1)}(sa_{n},\,sa_{1},\,\cdots,\,sa_{n-1})\,.
\end{equation}
Consider the map $1-T$ from $\rbB(A)$ to $_{\pi}\bar{A}[1]\otimes_{\pi}\bB(A)$ whose restriction to $\bar{A}[1]^{\otimes n}$ is
\begin{eqnarray}\label{TDef2}
&&(1-T_n)(sa_{1},\,\cdots,\,sa_{n})\\
&=&(sa_{1},\,\cdots,\,sa_{n})-(-1)^{(|a_{n}|+1)(|a_{1}|+\cdots+|a_{n-1}|+n-1)}(sa_{n},\,sa_{1},\,\cdots,\,sa_{n-1})\,,\nonumber
\end{eqnarray}
and the map $N$ from $_{\pi}\bar{A}[1]\otimes_{\pi}\bB(A)$ to $\rbB(A)$ whose restriction to $\bar{A}[1]^{\otimes n}$ is
\begin{eqnarray}\label{NDef}
&&(1+T_n+\dots+T_n^{n-1})(sa_{1},\,\cdots,\,sa_{n})\\
&=&\sum_{i=1}^{n}(-1)^{(|a_{i}|+\cdots+|a_{n}|+n-i+1)(|a_{1}|+\cdots+|a_{i-1}|+i-1)}(sa_{i},\,\cdots,\,sa_{n},\,sa_{1},\,\cdots,\,sa_{i-1})\,,\nonumber
\end{eqnarray}

\begin{lemma}\label{CycBicomplexLemma}
$1-T$ and $N$ are morphisms between complexes.
\end{lemma}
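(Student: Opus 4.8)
The plan is to show that the two maps $1-T$ and $N$ commute with the differentials on the relevant complexes. Recall that $\rbB(A)$ carries the differential $d_0+d_1+d_2$ coming from the bar construction, while $_{\pi}\bar{A}[1]\otimes_{\pi}\bB(A)$ carries the Hochschild differential $d_0^H+d_1^H+d_2^H$ of Section \ref{subsecHoch} (restricted to the reduced part). So the claim amounts to the two identities
\begin{equation*}
(d_0^H+d_1^H+d_2^H)\circ(1-T)\,=\,(1-T)\circ(d_0+d_1+d_2)\qquad\text{and}\qquad (d_0+d_1+d_2)\circ N\,=\,N\circ(d_0^H+d_1^H+d_2^H)\,.
\end{equation*}
I would first isolate the purely combinatorial content: the operators $d_0$ (insertion of $s\Theta$) and $d_1$ (applying $\triangledown$ to one slot) are "local" in that they act by a signed sum over positions, and they manifestly commute with the cyclic operator $T$ in the appropriate sense; the only subtlety is the boundary terms. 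The operator $d_2$ (multiplication of adjacent factors) is where the cyclic symmetrization actually does something: the ``extra'' terms $(-1)^{|a_0|+1}(a_0a_1,\dots)$ and the wrap-around term $(a_na_0, sa_1,\dots,sa_{n-1})$ in $d_2^H$ are precisely what is produced when $1-T$ (resp.\ $N$) interacts with the $d_2$ of the bar complex.

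Concretely, I would proceed in the following order. First, treat $d_0$ and $d_1$: show $d_i^H\circ(1-T)=(1-T)\circ d_i$ for $i=0,1$ by a direct sign bookkeeping, noting that on $\rbB(A)$ these differentials are exactly $d_i$ and the terms of $d_i^H$ acting on the $a_0$-slot conspire with the cyclic rotation. Second, and this is the crux, handle $d_2$: expand $d_2^H\circ(1-T_n)$ and $(1-T_{n-1})\circ d_2$ on a generator $(sa_1,\dots,sa_n)$, and check term-by-term that all the ``interior'' multiplications cancel in pairs after applying $1-T$, leaving only the boundary multiplications, which match. This is essentially the classical verification that Connes' $B$-operator / the cyclic bicomplex is well-defined, transplanted to the bar-complex side and allowing $A$ to be curved; the curvature only enters through $d_0$, which commutes trivially. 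Third, dualize the argument for $N=1+T+\dots+T^{n-1}$: here one uses that $N\circ(1-T)=0=(1-T)\circ N$ on the nose together with the identity $d^H\circ d^H=0$ and $d\circ d=0$ already established (the Hochschild differential squares to zero by Proposition \ref{CTwTensor}, the bar differential by Proposition \ref{BarDGC}), or alternatively repeat the direct computation with $N$ in place of $1-T$.

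The main obstacle will be the sign bookkeeping in the $d_2$ step: one must carefully track the Koszul signs attached both to the cyclic permutation $T_n$ (which involves the product of degrees $(|a_n|+1)(|a_1|+\cdots+|a_{n-1}|+n-1)$) and to the multiplication $d_2^H$, and verify that the wrap-around term of $d_2^H$ together with the $a_0a_1$-term reproduce exactly the effect of rotating and then multiplying. I expect that after choosing a consistent convention (e.g.\ working throughout with the suspended elements $sa_i$ of degree $|a_i|+1$), the interior terms cancel cleanly and the identity reduces to a single boundary check. The verification for $d_0$ and $d_1$, and the corresponding statements for $N$, are then routine by comparison. I would present the $d_2$ computation in detail and indicate the others are analogous.
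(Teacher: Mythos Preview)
Your overall strategy---split the differential into $d_0$, $d_1$, $d_2$ and verify compatibility with $T$ term by term---is exactly what the paper does. Two concrete points need correction, however.

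First, a sign: the Hochschild differential $d_0^H+d_1^H+d_2^H$ is defined on $_{\pi}\bar{A}\otimes_{\pi}\bB(A)$, whereas the target of $1-T$ is the \emph{suspended} complex $_{\pi}\bar{A}[1]\otimes_{\pi}\bB(A)$. Transporting along $s$ flips the sign, so the identity you actually need is $(-d_0^H-d_1^H-d_2^H)\circ(1-T)=(1-T)\circ(d_0+d_1+d_2)$, and similarly for $N$. With this correction, on $\bar{A}[1]^{\otimes n}$ one has $-d_1^H=d_1$ exactly, and $-d_2^H$ differs from $d_2$ by the single wrap-around term; likewise $-d_0^H$ differs from $d_0$ by the single term $d_{0,0}$ inserting $s\Theta$ in position $0$. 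So the curvature piece does \emph{not} ``commute trivially'': one needs the identity $d_{0,0}=T_{n+1}\circ d_{0,n}$ (and $d_{0,i}\circ T_n=T_{n+1}\circ d_{0,i-1}$), which is the exact analogue of the $d_2$ boundary relation. The paper in fact writes out the $d_0$ case in full and defers the $d_2$ case to Loday's classical argument, the reverse of your emphasis.

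Second, your proposed shortcut for $N$ does not work. From $d^H\circ(1-T)=(1-T)\circ d$ and $(1-T)\circ N=0=N\circ(1-T)$ you can only conclude that $(1-T)\circ(d\circ N-N\circ d^H)=0$, i.e.\ that $d\circ N-N\circ d^H$ lands in $\ker(1-T)$; nothing forces it to vanish, and the relations $d^2=0$, $(d^H)^2=0$ do not help. You must do the direct computation for $N$ as well: the paper uses the same position-shift identities $d_{j,i}\circ T_n=T_n\circ d_{j,i-1}$ (for $j=0,2$) and a reindexing of the double sum $\sum_i d_{j,i}\circ\sum_l T_n^l$. Your fallback ``repeat the direct computation'' is the correct route; just be aware the shortcut is a genuine gap, not merely a stylistic choice.
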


\begin{proof}
The isomorphism between $_{\pi}\bar{A}[1]\otimes_{\pi}\bB(A)$ and $_{\pi}\bar{A}\otimes_{\pi}\bB(A)$ is given by $s\,:\,a\otimes b\mapsto sa\otimes b$, hence the differential on $_{\pi}\bar{A}[1]\otimes_{\pi}\bB(A)$ is $-d_0^H-d_1^H-d_2^H$. We will show that
\begin{eqnarray}\label{CycBicomplexLemma3}
(-d_0^H-d_1^H-d_2^H)\circ(1-T) &=& (1-T)\circ(d_0+d_1+d_2)\\
\text{and}\qquad\qquad(d_0+d_1+d_2)\circ N&=& N\circ (-d_0^H-d_1^H-d_2^H)\,.\nonumber
\end{eqnarray}

Let
$$d_{2,\,i}(sa_{1},\,\cdots,\,sa_{n})\,=\,
\begin{cases}
(-1)^{|a_{1}|+\cdots+|a_{i}|+i-1}(sa_{1},\,\cdots,\,s(a_{i}a_{i+1}),\,\cdots,\,sa_{n}) & 1\leqslant i\leqslant n-1\\
(-1)^{(|a_{n}|+1)(|a_{1}|+\cdots+|a_{n-1}|+n-1)+|a_{n}|}(s(a_{n}a_{1}),\,sa_{2},\,\cdots,\,sa_{n-1}) & i=n\,.
\end{cases}
$$
Then $d_2=d_{2,\,1}+\cdots+d_{2,\,n-1}$ and $-d_2^H=d_{2,\,1}+\cdots+d_{2,\,n}$ on $\bar{A}[1]^{\otimes n}$. The ungraded versions of
\begin{equation}\label{CycBicomplexLemma2}
(-d_2^H)\circ(1-T)\,=\,(1-T)\circ d_2\quad\text{and}\quad d_2\circ N\,=\,N\circ(-d_2^H)
\end{equation}
are proved in \cite[Lemma 2.1.1]{L}, and the key ingredient in the proof is the identity
\begin{equation}\label{CycBicomplexLemma2i}
d_{2,\,i}\circ T_n\,=\,T_n\circ d_{2,\,i-1}\ \text{for}\ 2\leqslant i \leqslant n\quad\text{and}\quad d_{2,\,1}\circ T_n=d_{2,\,n}\,,
\end{equation}
which is also true in our case. Following the same argument as in the proof of \cite[Lemma 2.1.1]{L}, one can prove \eqref{CycBicomplexLemma2}. We leave the detail to the reader.

By comparing \eqref{BarD1} and \eqref{HochD1}, it is clear that $-d_1^H=d_1$ when they restrict to $\bar{A}[1]^{\otimes n}$. A straightforward checking shows that $d_1\circ T_n=T_n\circ d_1$ on $\bar{A}[1]^{\otimes n}$. Therefore, we have
\begin{equation}\label{CycBicomplexLemma1}
(-d_1^H)\circ(1-T)\,=\,(1-T)\circ d_1\quad\text{and}\quad d_1\circ N\,=\,N\circ(-d_1^H)\,.
\end{equation}

Let
$$d_{0,i}(sa_{1},\,\cdots,\,sa_{n})\,=\,(-1)^{|a_{1}|+\cdots+|a_{i}|+i+1}(sa_{1},\,\cdots,\,sa_{i},\,s\Theta,\,\cdots,\,sa_{n})\qquad 0\leqslant i \leqslant n\,,$$
Then $d_0=d_{0,\,0}+\cdots+d_{0,\,n}$ and $-d_0^H=d_{0,\,1}+\cdots+d_{0,\,n}$ on $\bar{A}[1]^{\otimes n}$. The following identity is easy to check
\begin{equation}\label{CycBicomplexLemma0i}
d_{0,\,i}\circ T_n\,=\,T_n\circ d_{0,\,i-1}\ \text{for}\ 1\leqslant i \leqslant n\quad\text{and}\quad d_{0,\,0}=T_n\circ d_{0,\,n}\,.
\end{equation}
Now,
\begin{eqnarray}
(-d_0^H)\circ(1-T)&=& (d_{0,\,1}+\cdots+d_{0,\,n}-T_n\circ d_{0,\,0}-\cdots-T_n\circ d_{0,\,n-1})\\
&=&(d_{0,\,0}+d_{0,\,1}+\cdots+d_{0,\,n}-T_n\circ d_{0,\,0}-\cdots-T_n\circ d_{0,\,n-1}-T_n\circ d_{0,\,n})\nonumber\\
&=&(1-T)\circ d_0\nonumber\,.
\end{eqnarray}
Moreover, \eqref{CycBicomplexLemma0i} implies that
\begin{equation}\label{CycBicomplexLemma0ij}
d_{0,\,i}\circ T_n^j\,=\,
\begin{cases}
T_n^j \circ d_{0,\,i-j} & \text{when\ }0\leqslant j<i\\
T_n^{j+1} \circ d_{0,\,n-j+i} & \text{when\ }i\leqslant j \leqslant n-1
\end{cases}
\end{equation}
Then one can write
\begin{eqnarray*}
d_0\circ N &=& (\sum_{i=0}^{n}d_{0,\,i})\circ (\sum_{j=0}^{n-1}T_n^j)\\
&=&\sum_{0\leqslant j < i \leqslant n} T_n^j\circ d_{0,\,i-j}+\sum_{0\leqslant i \leqslant j \leqslant n-1}T_n^{j+1}\circ d_{0,\,n-j+i}\\
&=&\sum_{1\leqslant q \leqslant n} (\sum_{0\leqslant j \leqslant n-q}T_n^j+\sum_{n-q+1\leqslant j+1 \leqslant n }T_n^{j+1})\circ d_{0,\,q}\\
&=& N\circ (-d_0^H)
\end{eqnarray*}
Therefore, \eqref{CycBicomplexLemma3} holds, and we have proved the desired the lemma.
\end{proof}

Since $(1-T)\circ N=N\circ(1-T)=0$, we have a periodic complex of chain complexes
\begin{equation}\label{PerCycBicomplex}
\cdots\xleftarrow{\ N\ }\,_{\pi}\bar{A}[1]\otimes_{\pi}\rbB(A)\,\xleftarrow{1-T}\,\rbB(A)\,\xleftarrow{\ N\ }\,_{\pi}\bar{A}[1]\otimes_{\pi}\rbB(A)\,\xleftarrow{1-T}\,\rbB(A)\,\xleftarrow{\ N\ }\cdots\,.
\end{equation}
The {\it reduced periodic cyclic bicomplex} of an augmented curved DG algebra $A$ is the bicomplex assocaited to \eqref{PerCycBicomplex}, denoted by $\overline{CC}^{\mathrm{per}}(A)$. The {\it reduced periodic cyclic homology} $\rHC^{\mathrm{per}}_{\bullet}(A)$ of $A$ is (up to a shift in degree) the homology of the {\it direct sum} total complex $\Tot^{\oplus} \big(\overline{CC}^{\mathrm{per}}(A)\big)$, i.e., 
\begin{equation}\label{PHCDef1}
\rHC^{\mathrm{per}}_{\bullet}(A)\,=\,\H_{\bullet+1}[\Tot^{\oplus} \big(\overline{CC}^{\mathrm{per}}(A)\big)]\,.
\end{equation}

\begin{remark}
Notice that \eqref{PerCycBicomplex} is a generaliztion of the periodic complex constructed in \cite[Sec. 5]{Qui89}, which is indentified to be the periodic cyclic bicomplex of a nonunital algebra. In particular, if $A$ is an augmented $k$-algebra (viewed as a curved DG algebra with trivial $\triangledown$ and $\Theta$), then the bicomplex associated to \eqref{PerCycBicomplex} is nothing but the standard reduced periodic cyclic bicomplex for $A$ with the column degrees shifted up by $1$. Therefore, technically, \eqref{PerCycBicomplex} should be $\overline{CC}^{\mathrm{per}}(A)[0,\,1]$ which explains the degree shifting in the definition above.
\end{remark}

Moreover, we have two other complexes
\begin{equation}\label{CycBicomplex}
0\longleftarrow_{\pi}\bar{A}[1]\otimes_{\pi}\rbB(A)\,\xleftarrow{1-T}\,\rbB(A)\,\xleftarrow{\ N\ }\,_{\pi}\bar{A}[1]\otimes_{\pi}\rbB(A)\,\xleftarrow{1-T}\,\rbB(A)\,\xleftarrow{\ N\ }\cdots\,,
\end{equation}
and
\begin{equation}\label{NegCycBicomplex}
\cdots\xleftarrow{\ N\ }\,_{\pi}\bar{A}[1]\otimes_{\pi}\rbB(A)\,\xleftarrow{1-T}\,\rbB(A)\,\xleftarrow{\ N\ }\,_{\pi}\bar{A}[1]\otimes_{\pi}\rbB(A)\,\xleftarrow{1-T}\,\rbB(A)\,\longleftarrow 0\,.
\end{equation}
where the first nonzero column of \eqref{CycBicomplex} is in degree $0$ and the last nonzero column of \eqref{NegCycBicomplex} is in degree $1$.

The bicomplex associated to \eqref{CycBicomplex} is called the {\it reduced cyclic bicomplex} of an augmented curved DG algebra $A$, denoted by $\overline{CC}(A)$. The {\it reduced cyclic homology} $\rHC_{\bullet}(A)$ of $A$ is (up to a shift in degree) the homology of the {\it direct sum} total complex $\Tot^{\oplus}\big(\overline{CC}(A)\big)$, i.e., 
\begin{equation}\label{HCDef1}
\rHC_{\bullet}(A)\,=\,\H_{\bullet+1}[\Tot^{\oplus} \big(\overline{CC}(A)\big)]\,.
\end{equation}
Similarly, the bicomplex associated to \eqref{NegCycBicomplex} is called the  {\it reduced negative cyclic bicomplex} of an augmented curved DG algebra $A$, denoted by $\overline{CC}^{-}(A)$. The {\it reduced negative cyclic homology} $\rHC^{-}_{\bullet}(A)$ of $A$ is (up to a shift in degree) the homology of the {\it direct sum} total complex $\Tot^{\oplus} \big(\overline{CC}^{-}(A)\big)$, i.e., 
\begin{equation}\label{NHCDef1}
\rHC^{-}_{\bullet}(A)\,=\,\H_{\bullet+1}[\Tot^{\oplus} \big(\overline{CC}^{-}(A)\big)]\,.
\end{equation}

It is clear that there is a short exact sequence
\begin{displaymath}
\begin{diagram}
0 & \rTo & \overline{CC}^{-}(A) & \rTo & \overline{CC}^{\mathrm{per}}(A) & \rTo & \overline{CC}(A)[2,\,0] & \rTo & 0\,,
\end{diagram}
\end{displaymath}
which induces the following long exact sequence
\begin{equation}\label{HCLongExactSeq}
\cdots\,\longrightarrow\,\rHC^{-}_{n}(A)\,\longrightarrow\,\rHC^{\mathrm{per}}_{n}(A)\,\longrightarrow\,\rHC_{n-2}(A)\,\longrightarrow\,\rHC^{-}_{n-1}(A)\,\longrightarrow\,\cdots\,.
\end{equation}

By \cite[Thm. 2.1.5]{L}, the rows in $\overline{CC}^{\mathrm{per}}(A)$ are exact. If $\overline{CC}^{\mathrm{per}}(A)$ is concentrated on the upper half-plane (for instance, $A$ is non-negatively graded), the row filtration on $\Tot^{\oplus} \big(\overline{CC}^{\mathrm{per}}(A)\big)$ is bounded below and exhaustive. The classical convergence theorem of spectral sequences forces $\rHC^{\mathrm{per}}_{\bullet}(A)$ to be trivial. Additionally, the spectral sequences associated to the row filtrations on $\Tot^{\oplus} \big(\overline{CC}(A)\big)$ and $\Tot^{\oplus} \big(\overline{CC}^{-}(A)\big)$ collapse at the first pages with only one non-zero column. Therefore, one has
\begin{equation}\label{HCDef2}
\rHC_{\bullet}(A)\,\cong\,\H_{\bullet+1}[\Coker\big(1-T\,:\,\rbB(A)\rightarrow _{\pi}\bar{A}[1]\otimes_{\pi}\rbB(A)\big)]\,,
\end{equation}
and
\begin{equation}\label{NHCDef2}
\rHC^{-}_{\bullet}(A)\,\cong\,\H_{\bullet}[\Ker\big(1-T\,:\,\rbB(A)\rightarrow _{\pi}\bar{A}[1]\otimes_{\pi}\rbB(A)\big)]\,.
\end{equation}
By \cite[Lemma 1.2]{Qui89}, the kernal of $1-T$ is isomorphic to $\rbB(A)^\n$, the cocommutator subspace of $\rbB(A)$. Hence,
\begin{equation}\label{NHCDef3}
\rHC^{-}_{\bullet}(A)\,\cong\,\H_{\bullet}[\rbB(A)^\n]\,.
\end{equation}

\subsection{Cyclic cohomology}
Recall that $\bB(A)$ is weight graded: $\bB(A)=\bigoplus\limits_{n\in \mathbb{N}}\bB(A)^{(n)}$, where $\bB(A)^{(n)}=\bar{A}[1]^{\otimes n}$. We define the bi-graded (weight and homological gradings) linear dual of $\rbB(A)$ to be
$$\big(\rbB(A)\big)_{bi}^\ast\defeq \bigoplus_{n\in \mathbb{N}^+}\big(\rbB(A)^{(n)}\big)^\ast\,.$$
Similarly, the bi-graded linear dual of $_{\pi}\bar{A}[1]\otimes_{\pi}\rbB(A)$ is
$$\big(_{\pi}\bar{A}[1]\otimes_{\pi}\rbB(A)\big)_{bi}^\ast\defeq \bigoplus_{n\in \mathbb{N}}\big(\bar{A}[1]\otimes\rbB(A)^{(n)}\big)^\ast\,.$$

Taking the bi-graded dual of \eqref{PerCycBicomplex}, \eqref{CycBicomplex}, and \eqref{NegCycBicomplex}, one has the following three complexes
\begin{equation}\label{coPerCycBicomplex}
\cdots\xrightarrow{\ N\ }\,\big(_{\pi}\bar{A}[1]\otimes_{\pi}\rbB(A)\big)_{bi}^\ast\,\xrightarrow{1-T^\ast}\,\big(\rbB(A)\big)_{bi}^\ast\,\xrightarrow{\ N^\ast\ }\,\big(_{\pi}\bar{A}[1]\otimes_{\pi}\rbB(A)\big)_{bi}^\ast\,\xrightarrow{1-T^\ast}\,\big(\rbB(A)\big)_{bi}^\ast\,\xrightarrow{\ N^\ast\ }\cdots\,;
\end{equation}
\begin{equation}\label{coCycBicomplex}
0\longrightarrow\,\big(_{\pi}\bar{A}[1]\otimes_{\pi}\rbB(A)\big)_{bi}^\ast\,\xrightarrow{1-T^\ast}\,\big(\rbB(A)\big)_{bi}^\ast\,\xrightarrow{\ N^\ast\ }\,\big(_{\pi}\bar{A}[1]\otimes_{\pi}\rbB(A)\big)_{bi}^\ast\,\xrightarrow{1-T^\ast}\,\big(\rbB(A)\big)_{bi}^\ast\,\xrightarrow{\ N^\ast\ }\cdots\,;
\end{equation}
\begin{equation}\label{coNegCycBicomplex}
\cdots\xrightarrow{\ N\ }\,\big(_{\pi}\bar{A}[1]\otimes_{\pi}\rbB(A)\big)_{bi}^\ast\,\xrightarrow{1-T^\ast}\,\big(\rbB(A)\big)_{bi}^\ast\,\xrightarrow{\ N^\ast\ }\,\big(_{\pi}\bar{A}[1]\otimes_{\pi}\rbB(A)\big)_{bi}^\ast\,\xrightarrow{1-T^\ast}\,\big(\rbB(A)\big)_{bi}^\ast\,\longrightarrow 0\,,
\end{equation}
whose associated bicomplexes are denoted by $\overline{CC}^\ast_{\mathrm{per}}(A)$, $\overline{CC}^\ast(A)$, and $\overline{CC}^\ast_{-}(A)$, respctively.

The {\it reduced periodic cyclic cohomology} $\rHC_{\mathrm{per}}^{\bullet}(A)$, the {\it reduced cyclic cohomology} $\rHC^{\bullet}(A)$, and the {\it reduced negative cyclic cohomology} $\rHC_{-}^{\bullet}(A)$ are (up to shifts in degrees) the homologies of the {\it direct product} total complexes of \eqref{coPerCycBicomplex}, \eqref{coCycBicomplex}, and \eqref{coNegCycBicomplex}, respectively. Explicitly,
\begin{eqnarray}
\rHC_{\mathrm{per}}^n(A) &\defeq& \H_{-n-1}[\Tot^{\prod} \big(\overline{CC}^\ast_{\mathrm{per}}(A)\big)] \label{coPHCDef}\,,\\
\rHC^n(A) &\defeq& \H_{-n-1}[\Tot^{\prod} \big(\overline{CC}^\ast(A)\big)] \label{coHCDef}\,,\\
\rHC_{-}^n(A) &\defeq& \H_{-n-1}[\Tot^{\prod} \big(\overline{CC}^\ast_{-}(A)\big)] \label{coNHCDef}\,.
\end{eqnarray}

Dual to \eqref{HCLongExactSeq}, we have
\begin{equation}\label{coHCLongExactSeq}
\cdots\,\longrightarrow\,\rHC^{n}(A)\,\longrightarrow\,\rHC_{\mathrm{per}}^{n}(A)\,\longrightarrow\,\rHC_{-}^{n+2}(A)\,\longrightarrow\,\rHC^{n+1}(A)\,\longrightarrow\,\cdots\,.
\end{equation}

\subsection{Noncommutative differential forms}\label{subsecNCDiff}
We shall recall some basic definitions and results related to the bimodule of noncommutative differential forms. We refer to the paper of Quillen \cite{Qui89} for a detailed treatment.

Let $R$ be a DG algebra. The bimodule $\Omega^{1}R$ of (noncommutative) differential (1-)forms over $R$ is defined to be the kernel of the multiplication map $\mu\,:\,R\otimes R \rightarrow R$. The $R$-bimodule structure on $\Omega^{1}R$ is induced from the outer bimodule structure on $R\otimes R$. By definition, there is a short exact sequence of $R$-bimodules
\begin{equation}\label{NCDiffDef}
0\longrightarrow \Omega^{1}R \xrightarrow{\ I\ } R\otimes R \xrightarrow{\ \mu\ } R \longrightarrow 0\,.
\end{equation}
Notice that $\Omega^{1}R$ represents the functor $\bDer(R,\,\mbox{--})$, i.e., for any $R$-bimodule $M$, one has 
$$\bDer(R,\,M)\,\cong\,\bHom_{R^{e}}(\Omega^{1}R,\,M)\,,$$
where $\bDer(R,\,M)$ is the complex of all $k$-linear derivations from $R$ to $M$.
If $M=\Omega^{1}R$, the derivation $\partial\,:\,R\rightarrow \Omega^{1}R$, corresponding to the identity map under the isomorphism above, is the universal derivation. 

Given a $R$-bimodule $M$, we denote its commutator quotient space by $M_{\n}$, i.e., $M_{\n}\defeq M/[R,\,M]$, and let $\n\,:\,M\rightarrow M_{\n}$ denote the canonical projection. In the case of a free bimodule, one has the following identification ({\it cf.} \cite[Prop. 3.5]{Qui89}).
\begin{proposition}\label{FreeBimodN}
There is a canonical isomorphism $(R\otimes V\otimes R)_{\n} \cong V\otimes R$, and the canonical projection is given by the composition
$$R\otimes V\otimes R \xrightarrow{\ t\ } V\otimes R\otimes R \xrightarrow{\id\otimes \mu} V\otimes R\,,$$
where $t$ is the cyclic permutation.
\end{proposition}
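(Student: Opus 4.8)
The plan is to write down an explicit chain-level inverse to the stated map and check that it is well defined on the commutator quotient. Recall that $M \defeq R\otimes V\otimes R$ carries the outer $R$-bimodule structure $a\cdot(r\otimes v\otimes s)\cdot b = ar\otimes v\otimes sb$, and that $M_{\n} = M/[R,M]$, where $[R,M]$ is the graded subspace spanned by the supercommutators $a\cdot m - (-1)^{|a||m|}m\cdot a$ with $a\in R$ and $m\in M$ homogeneous. Since $[R,M]$ is stable under the differential, $M_{\n}$ is a chain complex and $\n\colon M\to M_{\n}$ is a morphism of complexes. Here $t$ is understood as the Koszul-signed cyclic permutation $r\otimes v\otimes s\mapsto(-1)^{|r|(|v|+|s|)}\,v\otimes s\otimes r$, so that the composite $\pi\defeq(\id\otimes\mu)\circ t$ is given by $\pi(r\otimes v\otimes s)=(-1)^{|r|(|v|+|s|)}\,v\otimes sr$; it is manifestly a surjective morphism of complexes.

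First I would verify that $\pi$ annihilates $[R,M]$, so that it factors as $\pi=\bar\pi\circ\n$ for a morphism of complexes $\bar\pi\colon M_{\n}\to V\otimes R$. Concretely, applying $\pi$ to $a\cdot(r\otimes v\otimes s)=ar\otimes v\otimes s$ yields $(-1)^{(|a|+|r|)(|v|+|s|)}\,v\otimes sar$, while applying it to $(r\otimes v\otimes s)\cdot a=r\otimes v\otimes sa$ yields $(-1)^{|r|(|v|+|s|+|a|)}\,v\otimes sar$; after multiplying the latter by the sign $(-1)^{|a|(|r|+|v|+|s|)}$ appearing in the commutator, the two exponents differ by $2|a||r|\equiv 0$, so the commutator maps to $0$.

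Next I would produce the candidate inverse $\sigma\colon V\otimes R\to M_{\n}$, $v\otimes r\mapsto\n(1\otimes v\otimes r)$, which is visibly a morphism of complexes. The identity $\bar\pi\circ\sigma=\id$ is immediate from $\pi(1\otimes v\otimes r)=v\otimes r$. For $\sigma\circ\bar\pi=\id$, I would invoke the defining relation of $M_{\n}$ with $a=r$ and $m=1\otimes v\otimes s$: in $M_{\n}$ one has $r\otimes v\otimes s\equiv(-1)^{|r|(|v|+|s|)}\,1\otimes v\otimes sr$, and the right-hand side is exactly $\sigma\big(\bar\pi(\n(r\otimes v\otimes s))\big)$. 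Hence $\bar\pi$ is an isomorphism of complexes, and by construction it is induced by $t$ followed by $\id\otimes\mu$, which is the asserted description of the canonical projection.

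There is no genuine conceptual obstacle; the only point requiring care is keeping the Koszul signs consistent across the two sign computations above, and in the ungraded (or purely even) case the statement is transparent. Alternatively, one may deduce the proposition from the general identification $M_{\n}\cong R\otimes_{R^{e}}M$ together with $R\otimes_{R^{e}}(R\otimes V\otimes R)\cong V\otimes R$, but it is the explicit formula for the projection that is needed in the sequel, so the chain-level argument above is the one I would record.
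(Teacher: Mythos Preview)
Your proof is correct. Note that the paper does not actually supply its own proof of this proposition: it is stated with a citation to \cite[Prop.~3.5]{Qui89} and no argument is given. Your explicit chain-level argument---checking that $\pi=(\id\otimes\mu)\circ t$ kills supercommutators and exhibiting the inverse $\sigma(v\otimes r)=\n(1\otimes v\otimes r)$---is the standard one and your sign bookkeeping is accurate. Since the paper later uses the explicit description of the projection (e.g.\ in computing $d_{\Omega^1R_\n}$ in the proof of Theorem~\ref{ThmFTHCLongExactSeq}), having the formula written out as you do is exactly what is needed.
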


We can apply the commmutator quotient space functor to the canonical injection $I\,:\,\Omega^{1}R\rightarrow R\otimes R$, using Proposition \ref{FreeBimodN} to identify the commutator quotient space for free bimodule. This gives the following commutative diagram
\begin{displaymath}
\begin{diagram}[width=1.2cm,height=0.9cm]
\Omega^{1}R & \rTo^{I} & R\otimes R\\
\dTo^{\n} & & \dTo_{\mu\circ t}\\
\Omega^{1}R_{\n} & \rTo^{\beta} & R
\end{diagram}
\end{displaymath}
where $\beta$ is the unique map such that the diagram is commutative.

We define $\bar{\partial}\,:\, R\rightarrow \Omega^{1}R_{\n}$ to be the composition $\n \circ \partial$. It can be shown that both $\beta\circ \bar{\partial}$ and $\bar{\partial} \circ \beta$ are zero. Hence one has the following periodic complex
\begin{equation}\label{NCDiffPeriodic1}
\cdots \xleftarrow{\ \bar{\partial}\ } R \xleftarrow{\ \beta\ } \Omega^{1}R_{\n} \xleftarrow{\ \bar{\partial}\ } R \xleftarrow{\ \beta\ }  \Omega^{1}R_{\n} \xleftarrow{\ \bar{\partial}\ } \cdots\,.
\end{equation}

\begin{example}\label{NCDiffSemiFree}\cite[Example 3.10]{Qui89}
Let $V$ be a graded vector space and $R$ be the tensor algebra $T(V)$. Then $\Omega^{1}R$ can be identified with $R\otimes V\otimes R$, and the isomorphism $I\,:\, R\otimes V \otimes R \rightarrow \Omega_{R}^{1}\subseteq R\otimes R$ is given by
\begin{eqnarray}
&& I\{(v_{1},\,\cdots,\,v_{p-1})\otimes v_{p}\otimes (v_{p+1},\,\cdots,\,v_{m})\}\\
&=& (v_{1},\,\cdots,\,v_{p})\otimes(v_{p+1},\,\cdots,\,v_{m})-(v_{1},\,\cdots,\,v_{p-1})\otimes(v_{p},\,\cdots,\,v_{m})\,.\nonumber
\end{eqnarray}
Under this isomorphism, the universal derivation $\partial\,:\,R \rightarrow \Omega^{1}R$  is given by
$$\partial(v_{1},\,\cdots,\,v_{n})=\sum_{i=1}^{n}(v_{1},\,\cdots,\,v_{i-1})\otimes v_{i}\otimes (v_{i+1},\,\cdots,\,v_{n})\,,$$
and for all $r\in R$,
$$ I\circ \partial (r)= r\otimes 1 - 1\otimes r \,.$$
Moreover, by Proposition \ref{FreeBimodN}, one can identify  $\Omega^{1}R_{\n}$ with $V\otimes R$ in such a way that the canonical maps $\bar{\partial}$ (it may be called the {\it cyclic derivative}) and $\beta$ are given by the formulas
\begin{eqnarray}\label{CycDer}
&& \bar{\partial}(v_{1},\,\cdots,\,v_{n})\\
&=&\sum_{i=1}^{n}(-1)^{(|v_{1}|+\cdots+|v_{i-1}|)(|v_{i}|+\cdots+|v_{n}|)}v_{i}\otimes(v_{i+1},\,\cdots,\,v_{n},\,v_{1},\,\cdots,\,v_{i-1})\,.\nonumber\\
\nonumber\\
&& \beta \{v_{1}\otimes(v_{2},\,\cdots,\,v_{m})\}=[v_{1},\,(v_{2},\,\cdots,\,v_{m})]\,.
\end{eqnarray}
\end{example}

\begin{remark}
It can be shown that, in the case of the example above, the periodic complex \eqref{NCDiffPeriodic1} is exact provided one replaces $R$ by $\bar{R}\defeq R/k$ (see \cite[Sec. 3]{Kassel88} for a closely related discussion).
\end{remark}

\subsection{Cyclic cohomology of Koszul QLC algebra}\label{subsecFTHCLongExactSeq}
Let us recall a theorem due to Feigin and Tsygan \cite[Thm. 1]{FT85} which essentially says that the reduced cyclic homology can be viewed as a (non-abelian) derived functor in the sense of Quillen's homotopical algebra \cite{Qui67}. For a conceptual proof of this theorem, we refer to \cite[Sec. 3]{BKR}.

\begin{theorem}\label{FTrHC}
Given a graded $k$-algebra $A$. Let $R\xrightarrow{\sim} A$ be a semi-free resolution of $A$ in the category of DG algebras (i.e., the underlying graded algebra of $R$ is free). Then there is a natural isomorphism of graded vector spaces 
$$\rHC_\bullet(A)\,\cong\,\H_\bullet(R_\n)\,,$$
where $R_\n \defeq R/(k+[R,\,R])$. 
\end{theorem}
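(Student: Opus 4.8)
The statement is \cite[Thm.~1]{FT85}, and a conceptual proof occupies \cite[Sec.~3]{BKR}; here I only outline its structure. The plan is to interpolate through $R$ itself, using two facts: \emph{(i)} reduced cyclic homology of a semi-augmented DG algebra is a homotopy functor, so that the quasi-isomorphism $R\xrightarrow{\sim}A$ yields $\rHC_\bullet(A)\cong\rHC_\bullet(R)$; and \emph{(ii)} when $R$ is semi-free, $\rHC_\bullet(R)$ is computed directly by the small complex $R_\n$. Together these give $\rHC_\bullet(A)\cong\rHC_\bullet(R)\cong\H_\bullet(R_\n)$, and naturality follows, since two semi-free resolutions of $A$ are joined by a quasi-isomorphism over $A$, which by \emph{(i)}--\emph{(ii)} induces a quasi-isomorphism $R_\n\to R'_\n$ compatible with the two isomorphisms.

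For \emph{(i)} I would first check that $\rbB(-)$ carries a quasi-isomorphism of semi-augmented DG algebras to a quasi-isomorphism: filtering $\rbB$ by tensor weight makes the associated-graded differential the internal one, and one compares the resulting spectral sequences (convergent, the weight filtration being exhaustive and bounded below) using the K\"unneth theorem over the field $k$. The same filtration applies to the columns ${}_{\pi}\bar{R}[1]\otimes_{\pi}\rbB(R)$ and $\rbB(R)$ of the reduced cyclic bicomplex $\overline{CC}(R)$ of Section~\ref{subsecHC}, compatibly with the connecting maps $1-T$ and $N$ of Lemma~\ref{CycBicomplexLemma}; passing to the total complex $\Tot^{\oplus}\big(\overline{CC}(R)\big)$ — whose column filtration, under the running non-negativity and local-finiteness hypotheses, gives a convergent spectral sequence — yields $\rHC_\bullet(R)\cong\rHC_\bullet(R')$ for any quasi-isomorphism $R\to R'$, hence $\rHC_\bullet(R)\cong\rHC_\bullet(A)$. (Conceptually, this is the statement that $\rHC_\bullet$ is the left derived functor, in the sense of \cite{Qui67}, of the commutator-quotient functor $(-)_\n$ on the category of semi-augmented DG algebras, whose cofibrant objects are the semi-free ones.)

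For \emph{(ii)} I would take $R=T(W)$ semi-free and replace the reduced normalized Hochschild complex of $R$ by the small complex obtained by applying $(-)_\n$ to the length-one resolution $R\otimes W\otimes R\to R\otimes R$ of $R$ by free $R$-bimodules (the DG analogue of \eqref{TVRes}): by Proposition~\ref{FreeBimodN} and Example~\ref{NCDiffSemiFree} this is the two-term complex $\Omega^1R_\n\xrightarrow{\ \beta\ }\bar{R}$. The crucial point — the technical heart of \cite[Sec.~3]{BKR} — is that this replacement transports the full cyclic structure as well: the operators $1-T$ and $N$ (equivalently Connes' operator) are carried to $\beta$ and the cyclic derivative $\bar{\partial}$ of Example~\ref{NCDiffSemiFree}. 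Combining this with \cite[Thm.~2.1.5]{L} to collapse the rows of $\overline{CC}(R)$, the reduced cyclic bicomplex of $R$ is replaced by the half-periodic complex $X^+(R)$ obtained from \eqref{NCDiffPeriodic1} by substituting $\bar{R}$ for $R$ and truncating so as to terminate with a copy of $\bar{R}$, whence $\rHC_\bullet(R)\cong\H_\bullet\big(\Tot X^+(R)\big)$. Finally, by the Remark after Example~\ref{NCDiffSemiFree} the full periodic complex \eqref{NCDiffPeriodic1}, with $\bar{R}$ in place of $R$, is exact for $R=T(W)$ and degreewise in the internal grading; so in the spectral sequence of the double complex $X^+(R)$ taken with the periodic ($\beta,\bar{\partial}$) direction first, the $E^1$-page is concentrated in the terminal column, where it is $\Coker\big(\beta\colon\Omega^1R_\n\to\bar{R}\big)=\bar{R}/[\bar{R},\bar{R}]=R/(k+[R,R])=R_\n$, the remaining differential is the internal $d_R$, and therefore $\H_\bullet\big(\Tot X^+(R)\big)\cong\H_\bullet(R_\n)$.

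The main obstacle is the compatibility assertion in \emph{(ii)}: one must transport not merely the Hochschild ($b$-)differential but the entire cyclic structure — the operators $1-T$ and $N$, equivalently Connes' $B$ — along the small bimodule resolution, realizing them by $\beta$ and the cyclic derivative on $\Omega^1R_\n$ and $\bar{R}$. This is precisely the work carried out in \cite[Sec.~3]{BKR} (and, in the original algebraic setting, in \cite{FT85}). A secondary, bookkeeping-level point is the convergence of the spectral sequences used in \emph{(i)} and \emph{(ii)}, which is the reason the applications of this theorem in the present paper (Theorem~\ref{IntroThmFTHCLongExactSeq}) are stated for non-negatively graded, locally finite data.
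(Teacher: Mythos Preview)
The paper does not prove this theorem; it is stated with attribution to \cite[Thm.~1]{FT85} and the reader is referred to \cite[Sec.~3]{BKR} for a conceptual proof. Your outline follows precisely the BKR approach the paper points to---the derived-functor interpretation of $\rHC_\bullet$ via homotopy invariance plus the identification $\rHC_\bullet(R)\cong\H_\bullet(R_\n)$ for semi-free $R$ through the $X^+(R)$ bicomplex---so there is nothing to compare: your sketch is consistent with, and more detailed than, what the paper itself offers.
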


Pick a non-negatively graded semi-free resolution $R$ of $A$ (for example, the cobar-bar construction if $A$ is non-negatively graded). Recall from Example \ref{NCDiffSemiFree} and the remark after it, there is a periodic complex 
\begin{equation}\label{NCDiffPeriodic2}
\cdots \xleftarrow{\ \bar{\partial}\ } \bar{R} \xleftarrow{\ \beta\ } \Omega^{1}R_{\n} \xleftarrow{\ \bar{\partial}\ } \bar{R} \xleftarrow{\ \beta\ }  \Omega^{1}R_{\n} \xleftarrow{\ \bar{\partial}\ } \cdots\,,
\end{equation}
and the rows of the associated bicomplex are exact. Truncating the bicomplex associated to \eqref{NCDiffPeriodic2} at the $\bar{R}$-column in degree $0$, we obtain the following first quadrant bicomplex
\begin{equation}\label{XPlusComplex}
X^+(R)\defeq[0 \longleftarrow \bar{R} \xleftarrow{\ \beta\ } \Omega^{1}R_{\n} \xleftarrow{\ \bar{\partial}\ } \bar{R} \xleftarrow{\ \beta\ }  \Omega^{1}R_{\n} \xleftarrow{\ \bar{\partial}\ } \cdots]\,.
\end{equation}
It can be shown that (see \cite[Sec. 5.4]{BKR} for details)
\begin{equation}\label{XPlusHC}
\H_n[\Tot X^+(R)]\,\cong\,\H_\bullet(R_\n)\,\cong\, \rHC_n(A)\,.
\end{equation}

The following theorem is a generalization of \cite[Thm. 2.4.1 (a)]{FT87} to the case of Koszul QLC algebras.
\begin{theorem}\label{ThmFTHCLongExactSeq}
Let $A=A(V,\,R)$ be a QLC algebra, where $V$ is a non-negatively graded locally finite graded vector space, and $(qA)^!$ be its Koszul dual curved DG algebra. Suppose $qA$ is Koszul, then there is a long exact sequence
\begin{equation}\label{FTHCLongExactSeq}
\cdots\,\longrightarrow\,\rHC_{\mathrm{per}}^{-2-n}((qA)^!)\,\longrightarrow\,\rHC_{n}(A)\,\longrightarrow\,\rHC^{-1-n}((qA)^!)\,\longrightarrow\,\rHC_{\mathrm{per}}^{-1-n}((qA)^!)\,\longrightarrow\,\cdots\,.
\end{equation}
\end{theorem}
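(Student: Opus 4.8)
The plan is to derive the long exact sequence \eqref{FTHCLongExactSeq} by applying the cyclic cohomology long exact sequence \eqref{coHCLongExactSeq} to the curved DG algebra $(qA)^!$ and then rewriting one of its three terms via a natural isomorphism $\rHC_n(A)\cong\rHC_{-}^{-n}((qA)^!)$. Indeed, replacing $n$ by $-2-n$ in \eqref{coHCLongExactSeq} for $B\defeq(qA)^!$ gives
$$\cdots\to\rHC^{-2-n}(B)\to\rHC_{\mathrm{per}}^{-2-n}(B)\to\rHC_{-}^{-n}(B)\to\rHC^{-1-n}(B)\to\rHC_{\mathrm{per}}^{-1-n}(B)\to\cdots\,,$$
and substituting $\rHC_{-}^{-n}(B)\cong\rHC_n(A)$ produces exactly \eqref{FTHCLongExactSeq}. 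Note first that, since $V$ is non-negatively graded and locally finite, the Koszul dual curved coalgebra $(qA)^\dual$ is non-negatively graded and locally finite, hence by the Remark following Proposition \ref{CurvedDGAC} its graded dual $(qA)^!$ is a non-positively graded, locally finite, augmented curved DG algebra; so the constructions of Section \ref{secCycHo} apply to $(qA)^!$ and the above manipulation is legitimate.

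Thus the real content is the isomorphism $\rHC_n(A)\cong\rHC_{-}^{-n}((qA)^!)$, and the first move is to fix a convenient semi-free resolution of $A$. Because $qA$ is Koszul, Theorem \ref{KoszulFreeRes} tells us that $R\defeq\cb((qA)^\dual)$, whose underlying graded algebra is the free algebra $T(\overline{(qA)^\dual}[-1])$ and which is non-negatively graded, is a semi-free resolution of $A$ in DG algebras. By \eqref{XPlusHC} it therefore computes reduced cyclic homology, $\rHC_n(A)\cong\H_n[\Tot X^+(R)]$, where $X^+(R)$ is the first-quadrant bicomplex of \eqref{XPlusComplex}. The problem then reduces to identifying $X^+(R)$, up to the degree shift built into \eqref{coNHCDef}, with the reduced negative cyclic cochain bicomplex $\overline{CC}^{\ast}_{-}((qA)^!)$ of \eqref{coNegCycBicomplex}.

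I would carry out this identification column by column and then on differentials. Since $R=T(\overline{(qA)^\dual}[-1])$ is free, Example \ref{NCDiffSemiFree} and Proposition \ref{FreeBimodN} give $\bar R=\overline{T}(\overline{(qA)^\dual}[-1])$ and $\Omega^{1}R_{\n}\cong\overline{(qA)^\dual}[-1]\otimes R$, with explicit formulas for the cyclic derivative $\bar\partial$ and the commutator map $\beta$. Dualizing term by term — permissible because $(qA)^\dual$ is locally finite — and using $(sW)^{\ast}=s^{-1}W^{\ast}$ together with $(\overline{(qA)^\dual})^{\ast}=\overline{(qA)^!}$, one obtains isomorphisms of bigraded vector spaces $\bar R\cong(\rbB((qA)^!))^{\ast}_{bi}$ and $\Omega^{1}R_{\n}\cong({}_{\pi}\overline{(qA)^!}[1]\otimes_{\pi}\rbB((qA)^!))^{\ast}_{bi}$, which match the columns of $X^+(R)$ with those of $\overline{CC}^{\ast}_{-}((qA)^!)$. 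It then remains to verify that, under these identifications, the horizontal maps $\beta$ and $\bar\partial$ of \eqref{XPlusComplex} are the transposes of $1-T$ and $N$ of \eqref{NegCycBicomplex}, and that the cobar differential $d_0+d_1+d_2$ of $R=\cb((qA)^\dual)$ induces on $\bar R$ and on $\Omega^{1}R_{\n}$ the transposes of the Hochschild-type differential $d^{H}_{0}+d^{H}_{1}+d^{H}_{2}$ of \eqref{HochD0}--\eqref{HochD2} for $(qA)^!$. The latter is a cyclic refinement of bar--cobar duality: the three pieces of the cobar differential of $(qA)^\dual$, coming respectively from the curvature $\tilde{\theta}$, the codifferential $\tilde{\phi}$, and the reduced comultiplication, transpose exactly to the pieces $d^{H}_{0}$, $d^{H}_{1}$, $d^{H}_{2}$ coming from $\Theta$, $\triangledown$, and $\mu$ on $(qA)^!$. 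Granting this, $X^+(R)\cong\overline{CC}^{\ast}_{-}((qA)^!)$, whence $\rHC_n(A)\cong\H_n[\Tot X^+(R)]\cong\rHC_{-}^{-n}((qA)^!)$, and assembling with \eqref{coHCLongExactSeq} as above yields \eqref{FTHCLongExactSeq}.

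The step I expect to be the main obstacle is precisely the verification just described: keeping track of every (de)suspension and, above all, every Koszul sign so that the column-wise isomorphism of bigraded vector spaces is genuinely an isomorphism of bicomplexes — both for the horizontal maps ($\beta,\bar\partial$ versus $(1-T)^{\ast},N^{\ast}$) and for the vertical differentials (cobar versus Hochschild). Everything else is already in hand: the choice of resolution from Theorem \ref{KoszulFreeRes}, the computation of $\rHC_\bullet(A)$ through $X^+$ via \eqref{XPlusHC}, and the formal long exact sequence \eqref{coHCLongExactSeq}.
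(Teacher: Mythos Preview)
Your proposal is correct and follows essentially the same route as the paper: choose $R=\cb((qA)^\dual)$ via Theorem \ref{KoszulFreeRes}, use \eqref{XPlusHC} to compute $\rHC_\bullet(A)$ through $X^+(R)$, identify $X^+(R)$ with $\overline{CC}^\ast_{-}((qA)^!)$ by dualizing columnwise (the local finiteness and non-negativity of $(qA)^\dual$ being exactly what makes this legitimate), and then plug the resulting isomorphism $\rHC_n(A)\cong\rHC_{-}^{-n}((qA)^!)$ into \eqref{coHCLongExactSeq}. The paper carries out the vertical-differential check by writing down the induced differential on $\Omega^1 R_\n\cong \overline{(qA)^\dual}[-1]\otimes R$ explicitly (your formula \eqref{VRDiff}-to-be) and observing it is dual to the Hochschild differential, while leaving the horizontal match $(\beta,\bar\partial)\leftrightarrow((1-T)^\ast,N^\ast)$ implicit; your singling this out as the place where the signs must be tracked is well judged.
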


\begin{proof}
Since $qA$ is Koszul, by Theorem \ref{KoszulFreeRes}, we can choose $R$ to be the cobar construction $\cb((qA)^\dual)$ of the Koszul dual curved coalgebra $(qA)^\dual$. Let $V=\overline{(qA)^\dual}[-1]$, then $R\cong T(V)$ as graded algebras. Thus, the underlying graded vector spaces of $\Omega^{1}R$ is isomorphic to $R\otimes V\otimes R$. Notice that the following composition is identity,
$$R\otimes V\otimes R \xrightarrow{\ i\ } R\otimes R\otimes R \xrightarrow{\ \tilde{\partial}\ } R\otimes V\otimes R\cong \Omega^{1}R\,,$$
where $i$ is the obvious inclusion and $\tilde{\partial}$ is the unique $R$-bimodule morphism extended from the universal derivation $\partial$. Moreover, because $\tilde{\partial}$ is a morphism of complexes, one has the following commutative diagram
\begin{displaymath}
\begin{diagram}[width=1.2cm,height=0.9cm]
R\otimes V\otimes R & \rTo^{\ \ \id} & \Omega^{1}R\\
\dTo^{d_{R\otimes R\otimes R}} & & \dTo_{d_{\Omega^{1}R}}\\
R\otimes R\otimes R & \rTo^{\ \ \tilde{\partial}} & \Omega^{1}R
\end{diagram}
\end{displaymath}
Thus, the differential on $\Omega^{1}R$ is given by the following formula: for any $p\otimes v \otimes q\in R\otimes V\otimes R$, with $v=s^{-1}c$, one has
\begin{eqnarray}\label{RVRDiff}
&&d_{\Omega^{1}R}(p\otimes v\otimes q)\\ 
&=&d_{R}(p)\otimes v\otimes q +(-1)^{|p|}p\otimes d_{V}(v)\otimes q + (-1)^{|p|+|v|} p\otimes v\otimes d_{R}(q)\nonumber \\
&&+(-1)^{|p|+|v'|}p\cdot v'\otimes v''\otimes q + (-1)^{|p|+|v'|}p\otimes v'\otimes v''\cdot q\,.\nonumber
\end{eqnarray}
Here $d_{V}(v)=-s^{-1}d_{C}(c)$, and $\bar{\Delta}(sv)=s(v')\otimes s(v'')$ in the Sweedler notation.

Moreover, since $\n\,:\,\Omega^{1}R\rightarrow\Omega^{1}R_{\n}$ is a morphism of complexes, the differential on $\Omega^{1}R_{\n}$ is given by the following formula:
\begin{eqnarray}\label{VRDiff}
&&d_{\Omega^{1}R_{\n}}(v\otimes q)\\ 
&=&d_{V}(v)\otimes q + (-1)^{|v|} v\otimes d_{R}(q)\nonumber \\
&&+(-1)^{|v'|(1+|v''|+|q|)}v''\otimes q\cdot v' + (-1)^{|v'|}v'\otimes v''\cdot q\,.\nonumber
\end{eqnarray}

Since $V$ is non-negatively graded and locally finite, the weight decomposition \eqref{KDCWeightDecomp} implies that $(qA)^\dual$ is a non-negatively graded, connected, locally finite graded coalgerba. As the graded dual of $(qA)^\dual$, $(qA)^!$ is a non-positively graded, connected, locally finite graded algerba. Hence, $\big(\overline{(qA)^!}[1]\big)^{\otimes n}$ is non-positively graded and locally finite for each $n$, and this implies that its graded linear dual is isomorphic to $\big(\overline{(qA)^\dual}[-1]\big)^{\otimes n}$. Hence, as graded vector spaces,  
$$\big(\rbB((qA)^!)\big)_{bi}^\ast\,\cong\,\bar{R} \qquad\text{and}\qquad \big(_{\pi}(\overline{(qA)^!}[1]\otimes_{\pi}\rbB((qA)^!)\big)_{bi}^\ast\,\cong\,V\otimes R\,.$$
Moreover, it is easy to see that the differential on $\rbB((qA)^!)$ is linearly dual to the differential on $\bar{R}$ and the differential on $\overline{(qA)^!}[1]\otimes_{\pi}\rbB((qA)^!)$ is linearly dual to \eqref{VRDiff}. Therefore, the bicomplex associated to $\overline{CC}^\ast_{-}((qA)^!)$ is isomorphic to $X^+(R)$, and
$$\rHC_n(A)\,\cong\,\H_n[\Tot X^+(R)]\,\cong\,\H_{n-1}[\Tot^{\prod} \big(\overline{CC}^\ast_{-}((qA)^!)\big)]\,=\,\rHC_{-}^{-n}((qA)^!)\,,$$
where the degree shifting in the second isomorphism is due to the first column of the bicomplex associated to $\overline{CC}^\ast_{-}((qA)^!)$ is in degree $-1$. The desired long exact sequence \eqref{FTHCLongExactSeq} is obtained by replacing $\rHC_{-}^{-n}((qA)^!)$ in the long exact sequence \eqref{coHCLongExactSeq} with $\rHC_{n}(A)$.
\end{proof}

\begin{remark}
Notice that our definitions of cyclic cohomologies are slight different from the ones given in \cite{FT87}. Specifically, in \cite{FT87} the periodic cyclic cohomology and the cyclic cohomology are the homology of the {\it direct sum} total complexes of $\overline{CC}^\ast_{\mathrm{per}}(A)$ and $\overline{CC}^\ast(A)$, respectively. We denote the resulting cohomology theories by $\rHC_{\mathrm{per},\,\oplus}^{\bullet}$ and $\rHC_{\oplus}^{\bullet}$, respectively. The long exact sequence \eqref{FTHCLongExactSeq} is still valid if one replaces $\rHC_{\mathrm{per}}^{\bullet}$ and $\rHC^{\bullet}(A)$ by $\rHC_{\mathrm{per},\,\oplus}^{\bullet}$ and $\rHC_{\oplus}^{\bullet}(A)$, respectively. Since $(qA)^!$ is non-positively graded and connected, it is clear that the bicomplex $\overline{CC}^\ast_{\mathrm{per}}((qA)^!)$ is concentrated on the upper half-plane. Since its rows are exact, using the standard argument of spectral sequence, one can show that $\rHC_{\mathrm{per},\,\oplus}^{\bullet}((qA)^!)$ is trivial. Hence, we obtain \cite[Thm. 2.4.1 (b)]{FT87}, namely,
\begin{equation}\label{FTHCIso}
\rHC_{n}(A)\,\cong\,\rHC_{\oplus}^{-1-n}((qA)^!)\,.
\end{equation}
\end{remark}

\section{Commutative QLC Koszul algebras}\label{secComQLC}
We discuss commutative QLC Koszul algebras in this section, which can be viewed as a commutative/Lie analog to Section \ref{secQLC}. We shall freely use the language of operads and their Koszul duality theories. We refer to the book of Loday and Vallette \cite{LV} for a complete exposition. 
\subsection{Curved DG Lie coalgebras}
Recall from \cite{Michaelis80}, a graded Lie coalgebra $\mfG$ over $k$ is a graded vector space $\mfG$ equipped with a degree $0$ linear map $]\, \mbox{--}\, [ \,:\,\mfG \rightarrow \mfG\otimes\mfG$ satisfying
\begin{equation}\label{Liecoalg1}
\tau_{(12)}\circ\ ]\, \mbox{--}\, [\ =-\  ]\, \mbox{--}\, [\ \,,
\end{equation}
and
\begin{equation}\label{Liecoalg2}
\big(\id+\tau_{(123)}+\tau_{(123)}^2\big)\circ\big(\ ]\, \mbox{--}\, [\ \otimes \id\big)\circ\ ]\, \mbox{--}\, [\ = 0\,,
\end{equation}
where $\tau_{(12)}$ and $\tau_{(123)}$ denote the cyclic permutations on the graded vector spaces $\mfG\otimes \mfG$ and $\mfG\otimes \mfG\otimes \mfG$, respectively. The linear map $]\, \mbox{--}\, [ $ is called the {\it Lie cobracket} of $\mfG$ and the identity \eqref{Liecoalg2} is called the {\it co-Jacobi identity}.

\begin{remark}
Because of \eqref{Liecoalg1}, the Lie cobracket factors through $\wedge^2\mfG \hookrightarrow \mfG\otimes \mfG$, where the inclusion is given by:
$$x\wedge y\,\mapsto\,x\otimes y-(-1)^{|x||y|}y\otimes x\,.$$
\end{remark}

A {\it curved DG Lie coalgebra} over $k$ is a $k$-graded Lie coalgebra $\mfG$ endowed with a Lie coderivation $d$ (i.e., $\ ]\, \mbox{--}\, [\ \circ d=(d\otimes \id+\id\otimes d)\circ \ ]\, \mbox{--}\, [\ $) of degree $-1$ and a linear map $h\,:\,\mfG\rightarrow k$ of degree $-2$ satisfying
\begin{enumerate}
	\item $d^2\,=\, (h\otimes \id)\circ\ ]\, \mbox{--}\, [\ $
	\item $h\circ d\,=\,0$
\end{enumerate}

Recall that there is a Lie coalgebra functor ${\mathcal Lie}^{c}$ from $\GrCoAlg_k$, the category of graded coalgebras over $k$, to $\GrCoLAlg_k$, the category of graded Lie coalgebras over $k$: For each $(C,\,\Delta)\in \GrCoAlg_k$, the underlying graded vector space of ${\mathcal Lie}^{c}(C)$ is identical to $C$ and the Lie cobracket is given by $\Delta-\tau_{(12)}\circ \Delta$. 

\begin{proposition}
Given a curved DG coalgebra $(C,\,d,\,h)$, then $({\mathcal Lie}^{c}(C),\,d,\,h)$ is a curved DG Lie coalgebra.
\end{proposition}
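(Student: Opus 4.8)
The plan is to verify directly the three defining conditions of a curved DG Lie coalgebra for the triple $({\mathcal Lie}^{c}(C),\,d,\,h)$, using only the curved DG coalgebra axioms for $(C,\,d,\,h)$ together with the definition of the functor ${\mathcal Lie}^{c}$, under which the Lie cobracket on ${\mathcal Lie}^{c}(C)$ is $\ ]\, \mbox{--}\, [\ =\Delta-\tau_{(12)}\circ\Delta$. The graded antisymmetry and the co-Jacobi identity for $\ ]\, \mbox{--}\, [\ $ require no checking, since ${\mathcal Lie}^{c}(C)$ is a graded Lie coalgebra by construction of the functor; likewise the conditions that $h$ has degree $-2$ and that $h\circ d=0$ are inherited verbatim from the curved DG coalgebra data. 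Hence the only substantive points are: (i) $d$ is a Lie coderivation, and (ii) $d^{2}=(h\otimes\id)\circ\ ]\, \mbox{--}\, [\ $.

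For (i) I would first record the elementary identity $\tau_{(12)}\circ(d\otimes\id+\id\otimes d)=(d\otimes\id+\id\otimes d)\circ\tau_{(12)}$, which is immediate from the Koszul sign rule for the degree $-1$ map $d$: on a homogeneous tensor $x\otimes y$ both composites unwind to the same linear combination of $y\otimes dx$ and $dy\otimes x$. Combining this with the coderivation identity $\Delta\circ d=(d\otimes\id+\id\otimes d)\circ\Delta$ then gives
\[
\ ]\, \mbox{--}\, [\ \circ\, d\;=\;(\id-\tau_{(12)})\circ(d\otimes\id+\id\otimes d)\circ\Delta\;=\;(d\otimes\id+\id\otimes d)\circ(\id-\tau_{(12)})\circ\Delta\;=\;(d\otimes\id+\id\otimes d)\circ\ ]\, \mbox{--}\, [\ ,
\]
which is exactly the Lie coderivation property.

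For (ii) I would use that, since $h$ has degree $-2$ and $k$ is concentrated in degree $0$, the map $h\colon C\to k$ vanishes on $C_{i}$ for $i\neq 2$. Writing $\Delta(c)=\sum c'\otimes c''$ in Sweedler notation, every term surviving in $(h\otimes\id)\circ\tau_{(12)}\circ\Delta$ has $|c''|=2$, so the transposition sign $(-1)^{|c'|\,|c''|}=(-1)^{2|c'|}$ equals $1$; consequently $(h\otimes\id)\circ\tau_{(12)}\circ\Delta=(\id\otimes h)\circ\Delta$. Feeding this into the curvature axiom $d^{2}=(h\otimes\id-\id\otimes h)\circ\Delta$ yields
\[
(h\otimes\id)\circ\ ]\, \mbox{--}\, [\ \;=\;(h\otimes\id)\circ(\id-\tau_{(12)})\circ\Delta\;=\;(h\otimes\id-\id\otimes h)\circ\Delta\;=\;d^{2},
\]
which finishes the argument. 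The only delicate part is the bookkeeping of Koszul signs in these two identities, but in each case the relevant sign collapses — to $+1$ after invoking the degree of $h$ in (ii), and by the symmetric cancellation for $d$ in (i) — so I expect no genuine obstacle; the proposition is a short formal computation.
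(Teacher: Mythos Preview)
Your proof is correct and follows essentially the same approach as the paper: verify that $d$ is a Lie coderivation by commuting $\tau_{(12)}$ past $d\otimes\id+\id\otimes d$, and rewrite the curvature identity $(h\otimes\id-\id\otimes h)\circ\Delta$ as $(h\otimes\id)\circ(\id-\tau_{(12)})\circ\Delta$. The paper asserts the key identity $(\id\otimes h)\circ\Delta=(h\otimes\id)\circ\tau_{(12)}\circ\Delta$ without comment, whereas you justify it via the degree of $h$; this is a welcome extra detail but not a different method.
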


\begin{proof}
The coderivation $d$ becomes a Lie coderivation in ${\mathcal Lie}^{c}(C)$. Indeed,
\begin{eqnarray*}
\ ]\, \mbox{--}\, [\ \circ d &=& (\Delta-\tau_{(12)}\circ \Delta)\circ d\\
&=& (\id-\tau_{(12)})\circ(d\otimes \id+\id\otimes d)\circ\Delta\\
&=& (d\otimes \id+\id\otimes d)\circ\Delta-(\id\otimes d+d\otimes \id)\circ \tau_{(12)}\circ \Delta\\
&=& (d\otimes \id+\id\otimes d)\circ \ ]\, \mbox{--}\, [\ \,.
\end{eqnarray*}
Moreover,
$$d^2\,=\, (h\otimes \id- \id\otimes h)\circ\Delta\,=\,(h\otimes \id)\circ (\Delta-\tau_{(12)}\circ \Delta)\,=\,(h\otimes \id)\circ\ ]\, \mbox{--}\, [\ \,.$$
Finally, $h\circ d=0$ by the definition of a curved DG coalgebra.
\end{proof}

The {\it Lie cobar construction} $\cb_{\mathtt{Lie}}(\mfG)$ of a curved DG Lie coalgebra $(\mfG,\,d,\,h)$ is defined as the graded symmetric algebra $\bSym(\mfG[-1])$ equipped with three derivations $d_{0}$, $d_{1}$, and $d_{2}$, where $d_{0}$ (resp., $d_{1}$) is induced by $h$ (resp., $d$) on $\mfG$, and $d_{2}|_{\mfG[-1]}$ is given by the composite map
$$\mfG[-1] \cong k[-1] \otimes \mfG\xrightarrow{\Delta_{-1}\otimes\ ]\, \mbox{--}\, [\ } k[-1] \otimes k[-1] \otimes \wedge^2\mfG\,\cong\,\bSym^{2}(\mfG[-1])\,.$$
Explicitly, given $s^{-1}\xi\in \mfG[-1]$,
$$d_{2}(s^{-1}\xi)=-\frac{1}{2}\sum_{i}(-1)^{|\xi_{i}'|}s^{-1}\xi'_{i}\cdot s^{-1}\xi''_{i}\,,$$
where $]\xi[=\sum\limits_{i}\xi'_{i}\otimes \xi''_{i}$.

The following proposition is analogous to Proposition \ref{CobarDGA}.
\begin{proposition}\label{LieCobarDGCA}
Given a curved DG Lie coalgebra $(\mfG,\,d,\,h)$, the Lie cobar construction $\cb_{\mathtt{Lie}}(\mfG)$ is a semi-augmented commutative DG algebra with differential given by $d_0+d_1+d_2$.
\end{proposition}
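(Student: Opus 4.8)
The plan is to run the argument of Proposition~\ref{CobarDGA} in the symmetric-algebra setting, with the Lie cobracket $]\,\mbox{--}\,[$ and the co-Jacobi identity \eqref{Liecoalg2} playing the roles of the comultiplication and of coassociativity. First, $\bSym(\mfG[-1])$ is a graded-commutative algebra, and the canonical projection $\bSym(\mfG[-1])\twoheadrightarrow k=\bSym^{0}(\mfG[-1])$ endows it with a semi-augmentation; this projection is \emph{not} a chain map, since $d_{0}$ may land in the constants, which is exactly why one obtains only a semi-augmentation rather than an augmentation. By construction $d_{0}$, $d_{1}$, $d_{2}$ are derivations of degree $-1$, hence so is $D\defeq d_{0}+d_{1}+d_{2}$, and then $D^{2}=\tfrac12[D,D]$ is again a derivation of $\bSym(\mfG[-1])$. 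Since a derivation of a free commutative algebra is determined by its restriction to the generators, it suffices to verify $D^{2}(s^{-1}\xi)=0$ for every $s^{-1}\xi\in\mfG[-1]$.

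Expanding $D^{2}=\sum_{i,j\in\{0,1,2\}}d_{i}d_{j}$ and evaluating on a generator, the nine terms group and vanish just as in the proof of Proposition~\ref{CobarDGA}:
\begin{itemize}
\item $(d_{0}d_{0})(s^{-1}\xi)=(d_{1}d_{0})(s^{-1}\xi)=(d_{2}d_{0})(s^{-1}\xi)=0$, because $d_{0}(s^{-1}\xi)=h(\xi)\cdot 1\in k$ and every derivation annihilates $1$;
\item $(d_{0}d_{1})(s^{-1}\xi)=-h(d\xi)\cdot 1=0$, since $h\circ d=0$;
\item $(d_{0}d_{2}+d_{1}d_{1})(s^{-1}\xi)=0$, as a consequence of the curvature identity $d^{2}=(h\otimes\id)\circ]\,\mbox{--}\,[$ and the antisymmetry \eqref{Liecoalg1} of the cobracket;
\item $(d_{1}d_{2}+d_{2}d_{1})(s^{-1}\xi)=0$, because $d$ is a Lie coderivation, i.e. $]\,\mbox{--}\,[\circ\,d=(d\otimes\id+\id\otimes d)\circ]\,\mbox{--}\,[$;
\item $(d_{2}d_{2})(s^{-1}\xi)=0$, because of the co-Jacobi identity \eqref{Liecoalg2}.
\end{itemize}

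I expect the last item to be the only step requiring real care. Applying $d_{2}$ twice to $s^{-1}\xi$ and expanding by the Leibniz rule produces a sum of triple products in $\bSym(\mfG[-1])$ of the form $s^{-1}\xi''_{i}\cdot s^{-1}(\xi'_{i})'_{j}\cdot s^{-1}(\xi'_{i})''_{j}$, together with the symmetric terms coming from differentiating the second tensor factor; after reordering via the commutativity of $\bSym(\mfG[-1])$, these have to be matched against the three cyclic summands of $(\id+\tau_{(123)}+\tau_{(123)}^{2})\circ(]\,\mbox{--}\,[\otimes\id)\circ]\,\mbox{--}\,[\,=0$. The factor $-\tfrac12$ and the Koszul signs $(-1)^{|\xi'_{i}|}$ in the definition of $d_{2}$ are precisely what converts the graded antisymmetry of the cobracket into the symmetry required inside the symmetric algebra, so that co-Jacobi forces the sum to vanish; this is the curvature-free, desuspended dual of the classical Chevalley--Eilenberg computation, and the remaining items on the list are shorter variants of the same bookkeeping. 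Once $D^{2}=0$ has been checked, $\big(\bSym(\mfG[-1]),\,D\big)$ is a commutative DG algebra, and with the semi-augmentation above it is a semi-augmented commutative DG algebra, as claimed.
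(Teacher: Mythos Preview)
Your proof is correct and follows essentially the same approach as the paper's own argument: reduce $D^{2}=0$ to generators via the derivation property, then dispatch the nine cross-terms $d_{i}d_{j}$ with exactly the same groupings and the same justifications (derivations kill constants, $h\circ d=0$, the curvature identity, the coderivation property, and co-Jacobi). Your extra remarks on the semi-augmentation and on why antisymmetry and the $-\tfrac12$ normalisation are needed in the $(d_{0}d_{2}+d_{1}d_{1})$ and $(d_{2}d_{2})$ steps are accurate elaborations of points the paper leaves implicit.
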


\begin{proof}
Since $d_0$, $d_1$, and $d_2$ are derivations of degree $-1$,  $(d_0+d_1+d_2)^2=\frac{1}{2}[d_0+d_1+d_2,\,d_0+d_1+d_2]$. It suffices to show that for any $s^{-1}\xi\in \mfG[-1]$,  
\begin{eqnarray*}
&& (d_0+d_1+d_2)^2(s^{-1}\xi)\\
&=& (d_0d_0+d_0d_1+d_0d_2+d_1d_0+d_1d_1+d_1d_2+d_2d_0+d_2d_1+d_2d_2)(s^{-1}\xi)\\
&=& 0\,.
\end{eqnarray*}
Indeed,
\begin{itemize}
    \item $(d_0d_0)(s^{-1}\xi)=(d_1d_0)(s^{-1}\xi)=(d_2d_0)(s^{-1}\xi)=0$ for the restrictions of $d_0$, $d_1$, and $d_2$ to $k$ vanish.
    \item $(d_0d_1)(s^{-1}\xi)=0$ for $h\circ d\,=\,0$;
    \item $(d_0d_2+d_1d_1)(s^{-1}\xi)=0$ since $d^2\,=\, (h\otimes \id)\circ\ ]\, \mbox{--}\, [\ $;
    \item $(d_1d_2+d_2d_1)(s^{-1}\xi)=0$ since $d$ is a coderivation;
    \item $(d_2d_2)(s^{-1}\xi)=0$ because of the anticommutativity of $\mfG$, the commutativity of $\bSym(\mfG[-1])$, and the co-Jacobi identity on $\mfG$.
\end{itemize}
\end{proof}

\subsection{Commutative QLC algebras}
A {\it commutative quadratic-linear-constant (QLC)} data is a vector space $V$ (concentrated in degree $0$) together with a subspace
$$R \subset k\oplus V\oplus \Sym^2(V)\,.$$ The commutative QLC algebra $A=A(V,\,R)\defeq \Sym(V)/(R)$ is the quotient of the free commutative algebra over $V$ by the ideal $(R)$ generated by $R$. We assume that $R$ satisfies the following two properties:
\begin{eqnarray}
&R\cap (k\oplus V)=\{0\}&\label{cqlca1}\\
&(R)\cap (k\oplus V\oplus \Sym^2(V))=R\,.&\label{cqlca2}
\end{eqnarray}
The first condition amounts to the minimality of the space of generators of $A(V,\,R)$, and the second condition implies that $(R)$ does not create new QLC relations.

Let $q\,:\,\Sym(V)\twoheadrightarrow  \Sym^2(V)$ be the projection onto the quadratic part of symmetric algebra, and $qR$ be the image of $R$ under $q$. \eqref{cqlca1} guarantees that there exists a linear map $\psi\,:\,qR\rightarrow k\oplus V$ such that $R$ is the graph of $\psi$. Write $\psi=(-\phi,\,\theta)$, where $\phi\,:\,qR\rightarrow V$ and $\theta\,:\,qR\rightarrow k$. One has
\begin{equation}\label{CQLCgraph}
R\,=\,\{x-\phi(x)+\theta(x)\,\mid\,x\in qR\}\,.
\end{equation}

Let $\mathit{Com}$ denote the binary quadratic operad encoding {\it nonunital} commutative algebras. The notion of quadratic algebras over a binary quadratic operad was introduced by Ginzburg and Kapranov in \cite{GK94}. In \cite{JBMilles12}, Mill\`{e}s extends it to the notion of monogenic algebras over a quadratic operad. We denote by $qA$ the associated quadratic $\mathit{Com}$-algebra of $A$, i.e., $qA=\overline{\Sym(V)}/(qR)$, where $\overline{\Sym(V)}\defeq \Sym(V)/k$. Notice that $qA$ is weight graded (see \cite[Prop. 4.1]{JBMilles12}):
\begin{equation}\label{CKAWeightDecomp}
qA\,=\,\bigoplus_{n\in \mathbb{N}^+}qA^{(n)}\,=\,V \oplus (\Sym^2(V)/qR) \oplus \cdots \oplus \big(\Sym^n(V)/\Sym^{n-2}(V)qR\big)\oplus \cdots\,.
\end{equation}

Recall that $\mathit{Com}$ is a Koszul operad, and the operadic desuspension of its Koszul dual cooperad $\mathit{S}^{-1}\mathit{Com}^\dual$ is $\mathit{Lie}^c$, the cooperad encodes Lie coalgebras. According to \cite{JBMilles12}, one can associate a graded Lie coalgebra $(qA)^\dual$ to the quadratic $\mathit{Com}$-algebra $qA$, and it is called {\it Koszul dual $\mathit{Lie}^c$-coalgebra} of $qA$. It is a positively graded Lie coalgebra with a weight decomposition given by (see \cite[Prop. 4.3]{JBMilles12})
\begin{equation}\label{KDLieCWeightDecomp}
(qA)^{\dual}\,=\,\bigoplus_{n\in \mathbb{N}^+}(qA)^{\dual,\,(n)}\,,
\end{equation}
where $(qA)^{\dual,\,(1)}=sV$, $(qA)^{\dual,\,(2)}=s^2qR$, and
$$(qA)^{\dual,\,(n)}\,=\,\bigcap_{i+2+j=n} \mathit{Lie}^c(n-1)\otimes_{\mathbb{S}_{n-1}}\big((sV)^{\otimes i}\otimes s^2qR \otimes (sV)^{\otimes j}\big)\quad\text{for\ }n\geqslant 3\,.$$

Then, $\phi$ induces a degree $-1$ map $\tilde{\phi}$ from $(qA)^{\dual}$ to $sV$ via the composition:
\begin{equation}\label{phiqadual2}
\tilde{\phi}\,:\,(qA)^{\dual}\twoheadrightarrow s^2qR\xrightarrow{s^{-1}\phi} sV\,,
\end{equation}
and $\theta$ induces a degree $-2$ map $\tilde{\theta}$ from $(qA)^{\dual}$ to $k$ via the composition:
\begin{equation}\label{thetaqadual2}
\tilde{\theta}\,:\,(qA)^{\dual}\twoheadrightarrow s^2qR\xrightarrow{s^{-2}\theta} k\,.
\end{equation}
In \cite{Idrissi18}, Idrissi extends Lemma \ref{PP05LemmaCC} to any binary quadratic operad. In particular, in the case of operad $\mathit{Com}$ and cooperad $\mathit{Lie}^c$, we obtain the following proposition
\begin{proposition}\cite[Prop. 3.4]{Idrissi18}\label{Idrissi18LemmaCC} 
Given $Y\in (qA)^{\dual,\,(3)}$, the maps $\tilde{\phi}$ and $\tilde{\theta}$ satisfy the following conditions
\begin{eqnarray}
&\tilde{\phi}(Y)\,\in\,s^2qR& \label{Idrissi18LemmaCC1}\\
&\tilde{\phi}^2(Y)\,=\,(h\otimes \id)\circ\ ] Y [\ &\label{Idrissi18LemmaCC2}\\
&\tilde{\theta}\circ\tilde{\phi}(Y)\,=\,0\,.&\label{Idrissi18LemmaCC3}
\end{eqnarray}
\end{proposition}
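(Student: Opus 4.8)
The plan is to recognize this as the $(\mathit{Com},\,\mathit{Lie}^c)$-instance of \cite[Prop. 3.4]{Idrissi18}, and to reprove it by transporting to the cooperadic setting the lifting argument behind Lemma \ref{PP05LemmaCC} (that is, \cite[Ch. 5, Prop. 1.1]{PP05}). First I would make the weight-$3$ space explicit: by the weight decomposition \eqref{KDLieCWeightDecomp}, $(qA)^{\dual,\,(3)}$ is the intersection of the two subspaces of the weight-$3$ part of the cofree $\mathit{Lie}^c$-coalgebra on $sV$ obtained by nesting the weight-$2$ block $s^2qR$ with a weight-$1$ input $sV$ in the two admissible ways. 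Since $\mathit{Lie}^c(2)$ is one-dimensional (the sign $\mathbb{S}_2$-representation) and $\mathit{Lie}^c$ is quadratic, an element $Y\in(qA)^{\dual,\,(3)}$ is encoded by a family of tensors in $(sV)^{\otimes 3}$ subject to the antisymmetry and co-Jacobi relations of $\mathit{Lie}^c$, together with the requirement that \emph{each} of the two inner binary blocks of $Y$ lie in $s^2qR\subset(sV)^{\otimes 2}$. With this description, $]\,Y\,[$ is computed from the cooperadic decomposition of the ternary $\mathit{Lie}^c$-cooperation underlying $Y$, $\tilde{\phi}(Y)$ by replacing an inner $s^2qR$-block with its $s^{-1}\phi$-image (summed over the two positions, with the signs of \eqref{phiqadual2}), and $\tilde{\theta}(Y)$ likewise with $s^{-2}\theta$ from \eqref{thetaqadual2}.

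The key step is then a single lifting computation that yields all three identities at once, exactly as in \cite[Ch. 5, Prop. 1.1]{PP05}. Each of the two admissible nestings lets one replace the inner $qR$-block of $Y$ by its graph lift $x\mapsto x-\phi(x)+\theta(x)\in R$, producing two elements of $(R)$ with the same $\Sym^3(V)$-component (namely the weight-$3$ monomial underlying $Y$); subtracting, their difference lies in $(R)\cap\big(k\oplus V\oplus\Sym^2(V)\big)$, which equals $R$ by \eqref{cqlca2} (or, when $qA$ is Koszul, by the weaker analogue of \eqref{qlca2W}, cf.\ the discussion after Theorem \ref{KoszulFreeRes}). Decomposing this difference along the graph $R=\{x-\phi(x)+\theta(x)\mid x\in qR\}$ then reads off the three claims: its $\Sym^2(V)$-component lies in $qR$, which is \eqref{Idrissi18LemmaCC1}, i.e.\ $\tilde{\phi}(Y)\in s^2qR$; its $V$-component is $-\phi$ of that $\Sym^2(V)$-component, which after re-expressing the difference-of-nestings through the Lie cobracket is \eqref{Idrissi18LemmaCC2}, $\tilde{\phi}^2(Y)=(h\otimes\id)\circ\,]\,Y\,[$; and its $k$-component vanishes, which is \eqref{Idrissi18LemmaCC3}, $\tilde{\theta}\circ\tilde{\phi}(Y)=0$. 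An alternative is to deduce the three identities from Lemma \ref{PP05LemmaCC} applied to $A$ viewed as an associative QLC algebra $A(V,\,R')$ with $qR'=qR\oplus\wedge^2V$ (where the associative $\phi',\theta'$ vanish on $\wedge^2V$), using the comparison between $(qA)^\dual$ and $(qA^+)^\dual$ provided by Theorem \ref{ThmKDLiecoalgKDcoalg}.

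\textbf{The main obstacle} is pinning down the cooperadic dictionary tightly enough that ``the two nestings give two lifts whose difference lies in $(R)$'' is literally correct --- in particular, checking that ``both inner binary blocks lie in $s^2qR$'' is \emph{exactly} the subspace of $(sV)^{\otimes 3}$ cut out by \eqref{KDLieCWeightDecomp}, and managing the $\mathbb{S}_3$-equivariance and Koszul signs when moving between the $\mathit{Lie}^c$-picture and the bare tensor picture. The one place where this is more than bookkeeping is recognizing the $V$-component of the difference as $(h\otimes\id)\circ\,]\,Y\,[$: this genuinely uses the co-Jacobi identity and the antisymmetry of the cobracket, and is the commutative repackaging of \eqref{PP05LemmaCC2}. (In the alternative route the analogous subtlety is that $(qA^+)^{\dual,\,(2)}=s^2(qR\oplus\wedge^2V)$ strictly contains $(qA)^{\dual,\,(2)}=s^2qR$, so one must rule out the associative $\tilde{\phi}$ pushing the image of $Y$ into the spurious $s^2\wedge^2V$ summand --- again a consequence of co-Jacobi.) Once \eqref{Idrissi18LemmaCC1} is in place, the remaining two identities drop out of the same decomposition, mirroring the end of the proof of Proposition \ref{KoszulCurCAg}.
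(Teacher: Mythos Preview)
The paper does not give its own proof of this proposition: it is stated as a direct citation of \cite[Prop.~3.4]{Idrissi18}, introduced with the remark that Idrissi ``extends Lemma~\ref{PP05LemmaCC} to any binary quadratic operad.'' Your primary approach --- transporting the graph-lifting argument of \cite[Ch.~5, Prop.~1.1]{PP05} to the $(\mathit{Com},\mathit{Lie}^c)$ setting, subtracting the two lifts, invoking \eqref{cqlca2}, and reading off the three identities from the weight components --- is exactly the strategy behind Idrissi's general proof, so your outline is aligned with what the citation points to.

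One remark on the alternative route. Theorem~\ref{ThmKDLiecoalgKDcoalg} is proved later in the paper and establishes only an isomorphism of underlying \emph{graded} coalgebras $\U^c((qA)^\dual)\cong(qA^+)^\dual$; its proof uses the universal properties from \cite{JBMilles12} and \cite{LV} and says nothing about compatibility of the curved structures $(\tilde\phi,\tilde\theta)$ on the two sides. So to push Lemma~\ref{PP05LemmaCC} through that isomorphism you would first have to check this compatibility separately --- not circular, but not something the paper supplies. A smaller point: in your decomposition of the difference of lifts, the $k$-component does not literally vanish; rather, it equals $\theta$ of the $\Sym^2(V)$-component (because the difference lies on the graph $R$), and matching this against the $k$-component computed directly from the two lifts is what yields \eqref{Idrissi18LemmaCC3}.
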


The following proposition is a Lie analog of Proposition \ref{KoszulCurCAg}, and the proof is rely on Proposition \ref{Idrissi18LemmaCC}.
\begin{proposition}\label{KoszulCurLieCAg}
$((qA)^\dual,\,\tilde{\phi},\,\tilde{\theta})$ is a curved DG Lie coalgebra.
\end{proposition}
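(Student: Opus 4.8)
The plan is to verify the two defining conditions of a curved DG Lie coalgebra for $((qA)^\dual,\,\tilde{\phi},\,\tilde{\theta})$ by mimicking the proof of Proposition \ref{KoszulCurCAg}, with Proposition \ref{Idrissi18LemmaCC} replacing Lemma \ref{PP05LemmaCC} and with a Lie-coalgebra analog of Lemma \ref{curvatureCoder}. There are really three things to check: that $\tilde{\phi}$ is a Lie coderivation of $(qA)^\dual$, that $\tilde{\phi}^2 = (\tilde{\theta}\otimes\id)\circ\,]\, \mbox{--}\, [\,$, and that $\tilde{\theta}\circ\tilde{\phi} = 0$. The first is the substantive point; granting it, the other two are short.

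First I would establish that $\tilde{\phi}$ is a Lie coderivation of $(qA)^\dual$. Recall that $(qA)^\dual$ is the Koszul dual $\mathit{Lie}^c$-coalgebra of $qA$ and, by the weight decomposition \eqref{KDLieCWeightDecomp}, is a weight-graded Lie sub-coalgebra of the cofree conilpotent Lie coalgebra cogenerated by $sV$. Using \eqref{phiqadual2}, one extends $\tilde{\phi}$ to the unique Lie coderivation of this cofree Lie coalgebra whose corestriction to the cogenerators $sV$ is $s^{-1}\phi$ on $(qA)^{\dual,\,(2)} = s^2qR$ and zero in every other weight; this is well defined on $(qA)^\dual$ because on $(qA)^{\dual,\,(n)}$ the cobracket only ever feeds a subword lying in $s^2qR$ into $s^{-1}\phi$. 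The content of the statement is then that $\tilde{\phi}$ preserves the weight grading, i.e. $\tilde{\phi}\big((qA)^{\dual,\,(n+1)}\big)\subseteq (qA)^{\dual,\,(n)}$, which I would prove by induction on $n$. The base cases are immediate: $n=1$ holds by the very definition \eqref{phiqadual2} of $\tilde{\phi}$, and $n=2$ is precisely \eqref{Idrissi18LemmaCC1}. The inductive step runs parallel to equations \eqref{DiffCurCAg1}--\eqref{DiffCurCAg2} in the proof of Proposition \ref{KoszulCurCAg}: one combines the Lie-coderivation identity, the fact that the weight-graded cobracket carries $(qA)^{\dual,\,(n+1)}$ into $\bigoplus_{i+j=n+1}(qA)^{\dual,\,(i)}\otimes (qA)^{\dual,\,(j)}$, the inductive hypothesis $\tilde{\phi}\big((qA)^{\dual,\,(n)}\big)\subseteq (qA)^{\dual,\,(n-1)}$, and the description of $(qA)^{\dual,\,(n)}$ as an intersection of Lie-words with $s^2qR$ inserted coming from \eqref{KDLieCWeightDecomp}. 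I expect this induction to be the main obstacle: unlike the coassociative case — where one just splits off a single cogenerator via the projections $p_1$ and $p_{n-1}$ — here one has to keep track of the $\mathbb{S}_{n-1}$-coinvariants appearing in $\mathit{Lie}^c(n-1)\otimes_{\mathbb{S}_{n-1}}(-)$ and check that the relevant intersection of such Lie-words is stable under $\tilde{\phi}$.

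With $\tilde{\phi}$ known to be a Lie coderivation, condition (1) follows exactly as in Proposition \ref{KoszulCurCAg}. Both $\tilde{\phi}^2 = \frac{1}{2}[\tilde{\phi},\tilde{\phi}]$ and $(\tilde{\theta}\otimes\id)\circ\,]\, \mbox{--}\, [\,$ are Lie coderivations of $(qA)^\dual$ — the former because the graded commutator of Lie coderivations is again a Lie coderivation, the latter by the Lie analog of Lemma \ref{curvatureCoder}, proved by the identical manipulation with the co-Jacobi identity \eqref{Liecoalg2} and coanticommutativity \eqref{Liecoalg1} in place of coassociativity. Since $(qA)^\dual$ is conilpotent, a Lie coderivation is determined by its corestriction to $sV = (qA)^{\dual,\,(1)}$; the corestrictions of $\tilde{\phi}^2$ and of $(\tilde{\theta}\otimes\id)\circ\,]\, \mbox{--}\, [\,$ vanish on weights $1$ and $2$ for degree reasons and agree on weight $3$ by \eqref{Idrissi18LemmaCC2}, whence $\tilde{\phi}^2 = (\tilde{\theta}\otimes\id)\circ\,]\, \mbox{--}\, [\,$. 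Finally, for condition (2): $\tilde{\theta}$ is supported on $(qA)^{\dual,\,(2)}$ while $\tilde{\phi}$ lowers weight by one, so $\tilde{\theta}\circ\tilde{\phi}$ is concentrated on $(qA)^{\dual,\,(3)}$, where it vanishes by \eqref{Idrissi18LemmaCC3}. This completes the verification.
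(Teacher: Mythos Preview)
Your proposal is correct and follows exactly the approach the paper indicates: the paper gives no detailed proof of this proposition, only stating that it ``is a Lie analog of Proposition \ref{KoszulCurCAg}, and the proof [relies] on Proposition \ref{Idrissi18LemmaCC}.'' Your outline carries this out faithfully---the induction on weight to show $\tilde{\phi}$ is a Lie coderivation, the Lie analog of Lemma \ref{curvatureCoder}, and the use of \eqref{Idrissi18LemmaCC1}--\eqref{Idrissi18LemmaCC3} in place of \eqref{PP05LemmaCC1}--\eqref{PP05LemmaCC3}---and you correctly flag that the inductive step is more delicate here than in the coassociative case because of the $\mathbb{S}_{n-1}$-coinvariants in \eqref{KDLieCWeightDecomp}.
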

We call this curved DG Lie coalgebra the {\it Koszul dual curved Lie coalgebra} of $A$. 

\subsection{The Lie cobar construction of the Koszul dual curved Lie coalgebra}
Consider the following composite map of degree $-1$
$$\kappa\,:\,(qA)^{\dual}\twoheadrightarrow sV \xrightarrow{s^{-1}}V\hookrightarrow A\,.$$
Recall that the Lie cobar construction $\cb_{\mathtt{Lie}}((qA)^{\dual})$ is a commutative DG algebra whose underlying graded commutative algebra is $\bSym((qA)^{\dual}[-1])$. Hence, $\kappa$ induces an algebra morphism $g_\kappa$ from $\cb_{\mathtt{Lie}}((qA)^{\dual})$ to $A$. Moreover, it can be checked that $g_\kappa$ is compatible with differentials, i.e., $g_\kappa\circ (d_0+d_1+d_2)=0$. One replaces the codomain of $\kappa$ with $qA$ and gets another map of degree $-1$
$$\kappa'\,:\,(qA)^{\dual}\twoheadrightarrow sV \xrightarrow{s^{-1}}V\hookrightarrow qA\,.$$
However, as the domain of $\kappa'$, $(qA)^{\dual}$ is viewed only as a graded Lie coalgebra (with trivial $d$ and $h$). Similarly, one can check that $\kappa'$ induces a DG algebra morphism $g_{\kappa'}\,:\,(\cb_{\mathtt{Lie}}((qA)^{\dual}),\,d_2)\rightarrow k\oplus qA$.

Let $\alpha\,:\,\mathit{Com}^\dual\rightarrow \mathit{Com}$ be the operadic twisting morphism. There is a functor $\cb_\alpha$ from the category of DG $\mathit{Com}^\dual$-coalgebras (or equivalently, DG $\mathit{Lie}^c$-coalgebras) to the category of DG $\mathit{Com}$-algebras. Recall that a $\mathit{Com}^\dual$-coalgebra structure on $s^{-1}\mfG$ is equivalent to a $\mathit{Lie}^c$-coalgebra structure on $\mfG$. Given a DG Lie coalgebra $\mfG$, the Lie cobar construction $\cb_{\mathtt{Lie}}(\mfG)$ is an augmented commutative algebra, and one has the following isomorphism ({\it cf.} \cite[Prop. 11.2.5]{LV})
$$\cb_{\mathtt{Lie}}(\mfG)\,\cong\,k\oplus \cb_\alpha(\mfG)\,.$$
Recall that from \cite[Thm. 4.9]{JBMilles12} the quadratic $\mathit{Com}$-algebra $qA$ is {\it Koszul} if the canonical projection $\cb_\alpha((qA)^\dual)\twoheadrightarrow qA$ is a quasi-isomorphism of DG $\mathit{Com}$-algebras (hence, $k\oplus \cb_\alpha((qA)^\dual)$ is quasi-isomorphic to $k\oplus qA$). The following theorem is a Lie analog of Theorem \ref{KoszulFreeRes}.

\begin{theorem}\label{CKoszulFreeRes}
Let $A$ be a commutative QLC algebra, and $\mfG=((qA)^\dual,\,\tilde{\phi},\,\tilde{\theta})$ be its Koszul dual curved Lie coalgebra. Suppose $qA$ is Koszul, then the morphism of commutative DG algebras $g_\kappa\,:\,\cb_{\mathtt{Lie}}(\mfG)\rightarrow A$ is a quasi-isomorphism.
\end{theorem}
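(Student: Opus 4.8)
The plan is to mirror the proof of Theorem~\ref{KoszulFreeRes}, replacing the tensor algebra by the symmetric algebra, the cobar construction by the Lie cobar construction, and the associated quadratic algebra by the quadratic $\mathit{Com}$-algebra $qA$. Recall that $\cb_{\mathtt{Lie}}(\mfG)=\bSym(\mfG[-1])$ as graded commutative algebras; since $\mfG=(qA)^\dual$ carries the weight grading \eqref{KDLieCWeightDecomp}, so does $\cb_{\mathtt{Lie}}(\mfG)$, and I would filter it by the subspaces $F_r$ consisting of elements of total weight $\leqslant r$. Since $\tilde{\phi}$ and $\tilde{\theta}$ lower weight by $1$ and $2$ respectively, the three pieces of the differential satisfy $d_2\colon F_r\to F_r$, $d_1\colon F_r\to F_{r-1}$, and $d_0\colon F_r\to F_{r-2}$, exactly as in the associative case. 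On the target side, $A$ carries the weight filtration $F'_r$ given by the image of $\Sym^{\leqslant r}(V)$, and $g_\kappa$ is a morphism of filtered complexes because $\kappa$ sends $(qA)^{\dual,\,(1)}=sV$ into $V\subseteq F'_1$.

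The heart of the argument will be a commutative nonhomogeneous PBW theorem: if $qA$ is Koszul, then $\mathrm{gr}\,A\cong k\oplus qA$ as graded commutative algebras. One route is to invoke the general operadic PBW theorem of Hirsh and Mill\`es \cite{HM12}, whose hypotheses are supplied by Idrissi's extension of Lemma~\ref{PP05LemmaCC}, namely Proposition~\ref{Idrissi18LemmaCC}. A shorter route is to regard $A$ as the associative QLC algebra $A^+=T(V)/(R^+)$, where $R^+\subset k\oplus V\oplus V^{\otimes 2}$ is spanned by $R$ together with the antisymmetric part $\wedge^2 V\subset V^{\otimes 2}$; then $qA^+=T(V)/(qR^+)=\Sym(V)/(qR)=k\oplus qA$ is Koszul as an associative algebra whenever $qA$ is Koszul as a $\mathit{Com}$-algebra, so Proposition~\ref{PBWThm} applied to $A^+$ yields $\mathrm{gr}\,A=\mathrm{gr}\,A^+\cong qA^+=k\oplus qA$. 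In either approach, conditions \eqref{cqlca1}--\eqref{cqlca2} are exactly what make this step go through.

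With the PBW isomorphism in hand, I would let $E^\bullet_{rs}$ (resp.\ $E^{'\,\bullet}_{rs}$) be the spectral sequence associated to the weight filtration on $\cb_{\mathtt{Lie}}(\mfG)$ (resp.\ on $A$). Both filtrations are bounded below and exhaustive, so by the convergence theorem \cite[Thm.~5.5.1]{Weibel94} the spectral sequences converge to $\H_\bullet(\cb_{\mathtt{Lie}}(\mfG))$ and to $A$, and $g_\kappa$ induces a compatible map between them. On the $E^0$ page the surviving differential is $d_2$, and by the PBW isomorphism $E^{'\,0}_{rs}$ is the weight-$r$, degree-$(r+s)$ part of $k\oplus qA$; one checks that the induced map $E^0\to E^{'\,0}$ is precisely the canonical projection $\cb_{\mathtt{Lie}}((qA)^\dual)=k\oplus\cb_\alpha((qA)^\dual)\twoheadrightarrow k\oplus qA$, which is a quasi-isomorphism because $qA$ is Koszul \cite[Thm.~4.9]{JBMilles12}. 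Hence $E^1\to E^{'\,1}$ is an isomorphism, and the comparison theorem of spectral sequences \cite[Thm.~5.2.12]{Weibel94} shows that $\H_\bullet(g_\kappa)$ is an isomorphism.

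The main obstacle will be the second step. Establishing $\mathrm{gr}\,A\cong k\oplus qA$ amounts to showing that the linear and constant terms of $R$ create no new relations in low weight, which is precisely where the standing hypotheses on $R$ are needed; in the associative setting this consumed Lemma~\ref{PP05LemmaCC} together with the equivalent characterizations of Koszulness in \cite[Ch.~2]{PP05}. If one follows the shorter route above, the subsidiary claim that ``$qA$ Koszul over $\mathit{Com}$'' forces ``$qA^+$ Koszul over $\mathit{Ass}$'' is standard but would need to be justified with care.
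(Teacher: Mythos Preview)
Your argument is correct and faithfully parallels the associative proof of Theorem~\ref{KoszulFreeRes}, but the paper takes a genuinely different and shorter route that avoids the PBW step altogether. The paper filters only the source $\cb_{\mathtt{Lie}}(\mfG)$ by weight; as you observe, the $E^0$-differential is $d_2$ and the Koszulness of $qA$ gives $\H_\bullet(\cb_{\mathtt{Lie}}(\mfG),d_2)\cong k\oplus qA$ concentrated in homological degree $0$, so $E^1_{rs}=0$ unless $r+s=0$ and the spectral sequence collapses there. This already yields $\H_i(\cb_{\mathtt{Lie}}(\mfG))=0$ for $i>0$. For $\H_0$ the paper does a direct computation: because here $V$ is concentrated in degree $0$, the degree-$0$ part of $\bSym(\mfG[-1])$ is exactly $\Sym(V)$ and the degree-$1$ part is $\Sym(V)\cdot sqR$, so the image of the total differential $d_0+d_1+d_2$ in degree $0$ is visibly $(R)$, whence $\H_0(\cb_{\mathtt{Lie}}(\mfG))\cong \Sym(V)/(R)=A$ and $g_\kappa$ induces this isomorphism.

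The trade-off is clear. The paper's argument is lighter---no second spectral sequence, no commutative PBW theorem, and in particular no need to justify the implication ``$qA$ Koszul over $\mathit{Com}$ $\Rightarrow$ $qA^+$ Koszul over $\mathit{Ass}$'' that you flag as the delicate point---but it leans essentially on the standing hypothesis that $V$ sits in degree $0$, which forces $\cb_{\mathtt{Lie}}(\mfG)_0=\Sym(V)$. Your approach, by contrast, would go through verbatim for a non-negatively graded $V$ (as in the associative Theorem~\ref{KoszulFreeRes}), at the cost of importing the operadic PBW machinery of \cite{HM12} or the associative reduction you sketch.
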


\begin{proof}
Since $\mfG$ is weight graded, the graded commutative algebra $\bSym(\mfG[-1])$ is weight graded as well. Consider the filtration $F_r$ of $\cb_{\mathtt{Lie}}(\mfG)$ defined by its weights. The three components of the differential map $d_{\cb_{\mathtt{Lie}}(\mfG)}=d_0+d_1+d_2$ satisfy
$$d_2\,:\,F_r\rightarrow F_r \qquad d_1\,:\,F_r\rightarrow F_{r-1} \qquad d_0\,:\,F_r\rightarrow F_{r-2}\,.$$
Thus, the filtration $F_r$ is stable under $d_{\cb_{\mathtt{Lie}}(\mfG)}$. Since it is bounded below and exhaustive, the associated spectral sequence $E_{rs}^{\bullet}$ converges to the homology $\H_\bullet(\cb_{\mathtt{Lie}}(\mfG))$.

$E_{rs}^{0}$ is made up of the elements in $\cb_{\mathtt{Lie}}(\mfG)$ of degree equal to $r+s$ and weight equal to $r$, and the differential $d^0$ is given by $d_2$. The Koszulness of $qA$ implies that  $g_{\kappa'}\,:\,(\cb_{\mathtt{Lie}}((qA)^{\dual}),\,d_2)\rightarrow k\oplus qA$ is a quasi-isomorphism. Thus, $\H_0(\cb_{\mathtt{Lie}}(\mfG),\,d_2)=k\oplus qA$, and $\H_{i}(\cb_{\mathtt{Lie}}(\mfG),\,d_2)=0$ for $i>0$. It follows that $E^1_{rs}\neq 0$ only if $r+s=0$. Therefore, the spectral sequence stables at page $1$, and one has $\H_{i}(\cb_{\mathtt{Lie}}(\mfG))=0$ for $i>0$. 

The following diagram presents the weight decomposition of  $\cb_{\mathtt{Lie}}(\mfG)$ at degree $1$ and $0$:
\begin{displaymath}
\begin{diagram}[height=0.75cm,width=3cm]
& & & & \text{Weights}\\
& \Sym^2(V)\cdot sqR & \rTo^{d_2\qquad\quad}  & \Sym^4(V) & (4)\\
& & \rdTo^{d_1}\rdTo(2,4)^{d_0}& & \\
& V\cdot sqR & \rTo^{d_2\qquad\qquad\qquad} & \Sym^3(V) & (3)\\
& & \rdTo^{d_1}\rdTo(2,4)^{d_0}& & \\
& sqR & \rTo^{d_2\qquad\qquad\qquad}  & \Sym^2(V) & (2)\\ 
& & \rdTo^{d_1}\rdTo(2,4)^{d_0}& & \\
& & & V & (1)\\
& && & \\
&  & & k & (0)\\
\text{Degrees} & 1 & & 0 &
\end{diagram}
\end{displaymath}
where the differential $d_2$ is simply the inclusion, and $d_1$ (resp., $d_0$) is induced by $\phi$ (resp., $\theta$). By the construction, $g_\kappa$ sends $\cb_{\mathtt{Lie}}(\mfG)_{0}$ isomorphically to $\Sym(V)$. Moreover, it is clear that this isomorphism induces a bijection between the boundaries in $\cb_{\mathtt{Lie}}(\mfG)_{0}$ and $(R)$. Therefore, $g_\kappa$ sends $\H_{0}(\cb_{\mathtt{Lie}}(\mfG))$ isomorphically to $A$. All in all, we have shown that $g_\kappa$ induces an isomorphism between the homologies of $\cb_{\mathtt{Lie}}(\mfG)$ and $A$.
\end{proof}

\subsection{Relationship with the QLC Koszul algebras}
Given a commutative QLC algebra $A=A(V,\,R)$, let $qA$ be the associated quadratic $\mathit{Com}$-algebra and $(qA)^\dual$ be the Koszul dual curved Lie coalgebra. If $V$ is finite dimensional, it can be shown that the graded linear dual of $(qA)^\dual$ is a curved DG Lie algebra, called the {\it Koszul dual curved Lie algebra} of $A$ and denoted by $(qA)^!$. The following proposition characterizes $(qA)^!$ in terms of quadratic Lie algebra.

\begin{proposition}
The underlying graded Lie algebra of $(qA)^!$ is isomorphic to
$$\mathit{Lie}(s^{-1}V^\ast)/(s^{-2}(qR)^\perp)\,,$$
where $(qR)^\perp$ is the annihilator of $qR$ for the natural pairing $\Sym^2(V^\ast)\otimes \Sym^2(V)\rightarrow k$.
\end{proposition}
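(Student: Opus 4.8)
The plan is to compute the graded Lie algebra underlying $(qA)^{!}=\big((qA)^\dual\big)^\ast$ by matching generators and relations, since a quadratic Lie algebra is freely determined by its weight $1$ part (generators) and weight $2$ part (relations). Because $V$ is finite dimensional, the weight decomposition \eqref{KDLieCWeightDecomp} shows that $(qA)^\dual$ is finite dimensional in each weight, so graded linear duality is well behaved. By \eqref{KDLieCWeightDecomp} the Koszul dual $\mathit{Lie}^{c}$-coalgebra $(qA)^\dual$ is the quadratic $\mathit{Lie}^{c}$-coalgebra cogenerated by $sV$ with corelations $s^{2}qR$; since $\mathit{Lie}^{c}$ is the dual cooperad of the operad $\mathit{Lie}=\mathit{Com}^{!}$, the general principle that graded linear duality exchanges a quadratic (co)algebra over a (co)operad with the quadratic algebra over the dual operad on the dual data (see \cite{JBMilles12}, and \cite[Ch.~3]{LV} in the associative case) gives an isomorphism of graded Lie algebras $(qA)^{!}\cong\mathit{Lie}\big((sV)^\ast\big)\big/\big((s^{2}qR)^{\perp}\big)$, where $(s^{2}qR)^{\perp}$ is the annihilator of $s^{2}qR$ inside the weight $2$ component $\mathit{Lie}(2)\otimes_{\mathbb{S}_2}\big((sV)^\ast\big)^{\otimes 2}$ of the free Lie algebra, taken with respect to the natural pairing between that space and the weight $2$ component of the cofree $\mathit{Lie}^{c}$-coalgebra on $sV$.

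What remains is suspension and sign bookkeeping to rewrite the right-hand side in the stated form. First $(sV)^\ast=s^{-1}V^\ast$, concentrated in degree $-1$. Since $s^{-1}V^\ast$ is purely odd, the Koszul sign attached to transposing two of its elements equals $(-1)^{(-1)(-1)}=-1$, which cancels the sign of the $\mathbb{S}_2$-representation $\mathit{Lie}(2)$; hence there is a canonical isomorphism $\mathit{Lie}(2)\otimes_{\mathbb{S}_2}(s^{-1}V^\ast)^{\otimes 2}\cong s^{-2}\Sym^{2}(V^\ast)$ (the same mechanism identifies $\mathit{Lie}^{c}(2)\otimes_{\mathbb{S}_2}(sV)^{\otimes 2}\cong s^{2}\Sym^{2}(V)$, under which $s^{2}qR$ corresponds to the subspace $s^{2}qR\subset s^{2}\Sym^{2}(V)$). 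Under these two identifications the pairing — which combines the operadic duality between $\mathit{Lie}^{c}(2)$ and $\mathit{Lie}(2)$ with the evaluation $(sV)^{\otimes 2}\otimes(s^{-1}V^\ast)^{\otimes 2}\to k$ — descends, up to the sign from $s^{2}\otimes s^{-2}$, to the natural pairing $\Sym^{2}(V)\otimes\Sym^{2}(V^\ast)\to k$ of the statement. Consequently $(s^{2}qR)^{\perp}$ is carried to $s^{-2}(qR)^{\perp}$, whence $(qA)^{!}\cong\mathit{Lie}(s^{-1}V^\ast)\big/\big(s^{-2}(qR)^{\perp}\big)$ as graded Lie algebras.

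The step I expect to require the most care is the compatibility in the second paragraph: one must verify that the identification $\mathit{Lie}(2)\otimes_{\mathbb{S}_2}(s^{-1}V^\ast)^{\otimes 2}\cong s^{-2}\Sym^{2}(V^\ast)$ genuinely intertwines the Lie-theoretic annihilator with the \emph{symmetric} annihilator $(qR)^{\perp}$ rather than with its ``exterior'' counterpart. This reduces to the classical Koszul sign trick that the exterior square of a purely odd graded space is the symmetric square of the underlying even space, together with the operadic suspension relating $\mathit{Com}^\dual$ and $\mathit{Lie}^{c}$ recalled at the beginning of this section; beyond this, no additional structure enters, since the differential $\tilde{\phi}$ and the curvature $\tilde{\theta}$ on $(qA)^{!}$ are irrelevant for a statement about the underlying graded Lie algebra.
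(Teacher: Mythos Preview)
Your proof is correct and follows essentially the same idea as the paper's: both exploit that $(qA)^\dual$ is a \emph{quadratic} $\mathit{Lie}^c$-coalgebra on $(sV,\,s^2qR)$ and that finite-dimensional graded linear duality exchanges it with the quadratic $\mathit{Lie}$-algebra on the dual data. The difference is one of presentation rather than substance. The paper argues directly from the universal property: $(qA)^\dual$ is the largest Lie subcoalgebra of $\mathit{Lie}^c(sV)$ whose projection to $\mathit{Lie}^c(2)(sV)/s^2qR$ vanishes; dualizing this characterization (using $\mathit{Lie}=(\mathit{Lie}^c)^\ast$ aritywise and $V$ finite dimensional) yields that $(qA)^!$ satisfies the universal property of the quotient $\mathit{Lie}(s^{-1}V^\ast)/(s^{-2}(qR)^\perp)$. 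You instead cite the general ``dual quadratic data'' principle from \cite{JBMilles12, LV} and then spend the bulk of your argument on the suspension and sign bookkeeping identifying $(s^2qR)^\perp$ with $s^{-2}(qR)^\perp$ via the Koszul sign rule for purely odd spaces. The paper glosses over that identification entirely, so your explicit discussion of why the ``Lie-theoretic'' annihilator matches the symmetric annihilator is a genuine addition; conversely, the paper's universal-property framing is more self-contained than appealing to an external duality statement.
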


\begin{proof}
By the construction of $(qA)^\dual$ in \cite{JBMilles12}, it is universal among the Lie subcoalgebras $\mfG$ of the cofree Lie coalgebra $\mathit{Lie}^c(sV)$ such that the composite 
\begin{equation}\label{SESKDLiecoalg}
\mfG\,\hookrightarrow\,\mathit{Lie}^c(sV) \,\twoheadrightarrow\,\mathit{Lie}^c(2)(sV)/s^2qR 
\end{equation}
is zero, where $s^2qR$ is viewed as a subspace of $\mathit{Lie}^c(2)(sV)$ via the idrentification $s^2(\Sym^2(V))\cong \wedge^2(sV)$.  The word ``universal" means that for any such Lie subcoalgebras $\mfG$, there exists a unique Lie coalgebra morphism $\mfG \rightarrow (qA)^\dual$ such that the following diagram commutes:
\begin{displaymath}
\begin{diagram}[height=0.7cm]
& & (qA)^\dual & &\\
& \ruTo & & \rdTo&\\
\mfG & & \rTo & & L^c(sV)\,.
\end{diagram}
\end{displaymath}

Notice that $\mathit{Lie}=(\mathit{Lie}^c)^\ast$, where the linear dualization means the ``arity-graded linear dualization'', i.e., $\mathit{Lie}(n)=(\mathit{Lie}^c(n))^\ast$. Since $V$ is finite dimensional, the linear dual of the sequence \eqref{SESKDLiecoalg} is 
\begin{equation}\label{SESKDLiealg}
\mfg\,\twoheadleftarrow\,\mathit{Lie}(s^{-1}V^\ast) \,\hookleftarrow\,s^{-2}(qR)^\perp\,.
\end{equation}
The desired proposition follows from the fact that $\mathit{Lie}(s^{-1}V^\ast)/(s^{-2}(qR)^\perp)$ is universal among the Lie quotient algebras $\mfg$ of the free Lie algebra $\mathit{Lie}(s^{-1}V^\ast)$ such that the composite \eqref{SESKDLiealg} is zero.
\end{proof}

Since the free commutative algebra $\Sym(V)$ can be identified with $T(V)/([V,\,V])$, the commutative QLC algebra $A$ can be viewed as a QLC algebra in the sense of Section \ref{secQLC}: $A\cong T(V)/p^{-1}(R)$, where $p\,:\,T(V) \twoheadrightarrow \Sym(V)$ is the canonical weight preserving projection. Similarly, $qA^+\defeq k\oplus qA$, the unitalization of $qA$, can be viewed as a quadratic associative algebra. Let $(qA^+)^\dual$ denote the Koszul dual coalgebra of $A$ viewed as an {\it associative} QLC algebra (see Subsection \ref{subsecKDCCA}). 

Recall from \cite{Michaelis80}, the universal coenveloping coalgebra functor $\U^c\,:\, \GrCoLAlg_k \rightarrow \GrCoAlg_k$ is right adjoint to  the functor ${\mathcal Lie}^{c}$. For every $\mfG\in \GrCoLAlg_k$, there is a natural Lie coalgebra morphism $\pi\,:\,{\mathcal Lie}^{c}(\U^c\mfG) \rightarrow \mfG$ corresponding to the identity map on $\U^c\mfG$.

\begin{example}
Let $W$ be a graded vector space over $k$. The universal coenveloping coalgebra of the cofree Lie coalgebra $L^c(W)\defeq\mathit{Lie}^c(W)$ is $T^c(W)\defeq k\oplus\mathit{Ass}^c(W)$, where $\mathit{Ass}^c$ is the linear dual of $\mathit{Ass}$. Moreover, the natural morphism $\pi\,:\,{\mathcal Lie}^{c}(T^c(W)) \rightarrow L^c(W)$ is formally dual to the canonical inclusion from the free Lie algebra to the tensor algebra, i.e., the inclusion of primitive elements into the tensor algebra equipped with the unshuffle comultiplication. In particular, in weight $2$, it is given by the canonical projection from $W\otimes W$ to $\wedge^2 W$ (see \cite{SchSta85}).
\end{example}

The following theorem explains the relation between the Koszul dual Lie coalgebra $(qA)^\dual$ and the Koszul dual (coassociative) coalgebra $(qA^+)^\dual$.

\begin{theorem}\label{ThmKDLiecoalgKDcoalg}
There is a natural isomorphism
$$\U^c((qA)^\dual)\,\cong\,(qA^+)^\dual\,.$$
\end{theorem}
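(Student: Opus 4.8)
The plan is to realize both $\U^c((qA)^\dual)$ and $(qA^+)^\dual$ as subcoalgebras of the cofree conilpotent coassociative coalgebra $T^c(sV)$ and to identify them through the universal properties of the quadratic (Lie) coalgebras involved. First I would make the right-hand side explicit. Viewing $A$ as the associative QLC algebra $T(V)/p^{-1}(R)$ from Section \ref{secQLC}, one computes $q(p^{-1}(R))=\pi_V^{-1}(qR)$, where $\pi_V\colon V^{\otimes 2}\onto\Sym^2(V)$ is the canonical projection, so $qA^+=T(V)/(\pi_V^{-1}(qR))$ and hence, by Subsection \ref{subsecKDCCA}, $(qA^+)^\dual=C(sV,W)$ with $W\defeq s^{\otimes 2}(\pi_V^{-1}(qR))\subseteq(sV)^{\otimes 2}$. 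Since $\ker\pi_V=\wedge^2 V$ and $qR\subseteq\Sym^2(V)$, this gives a decomposition $W=\Lambda\oplus s^2 qR$ inside $(sV)^{\otimes 2}=\wedge^2(sV)\oplus\Lambda$, where $\wedge^2(sV)$ is the image of $sv\wedge sw\mapsto sv\otimes sw+sw\otimes sv$ (so $s^2 qR\subseteq\wedge^2(sV)$) and $\Lambda\defeq s^{\otimes 2}(\wedge^2 V)$ is the complementary summand. Recall that $C(sV,W)$ is the maximal subcoalgebra of $T^c(sV)$ whose weight-$2$ component lies in $W$, with weight-$n$ component $\bigcap_{p+q=n-2}(sV)^{\otimes p}\otimes W\otimes(sV)^{\otimes q}$.

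Next, using that $\U^c$ is right adjoint to ${\mathcal Lie}^{c}$ and hence preserves monomorphisms, I would push the defining inclusion $\iota\colon(qA)^\dual\into L^c(sV)$ of Lie coalgebras through $\U^c$ and use $\U^c(L^c(sV))=T^c(sV)$ to regard $X\defeq\U^c((qA)^\dual)$ as a subcoalgebra of $T^c(sV)$; by the Example preceding the statement, the counit $\pi\colon{\mathcal Lie}^{c}(T^c(sV))\to L^c(sV)$ restricts in weight $2$ to the canonical projection $(sV)^{\otimes 2}\onto\wedge^2(sV)$, with kernel $\Lambda$. For the inclusion $X\subseteq C(sV,W)$: by naturality of the counit, the Lie coalgebra map ${\mathcal Lie}^{c}(X)\to L^c(sV)$ obtained from ${\mathcal Lie}^{c}(\U^c(\iota))$ and $\pi$ coincides with ${\mathcal Lie}^{c}(X)\xrightarrow{\mathrm{counit}}(qA)^\dual\xrightarrow{\ \iota\ }L^c(sV)$, so in weight $2$ the image of $X^{(2)}$ under the projection to $\wedge^2(sV)$ lies in $(qA)^{\dual,(2)}=s^2 qR$; therefore $X^{(2)}\subseteq\Lambda\oplus s^2 qR=W$, and since $X$ is a subcoalgebra of $T^c(sV)$ this forces $X\subseteq C(sV,W)$.

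For the reverse inclusion $C(sV,W)\subseteq X$: apply ${\mathcal Lie}^{c}$ to the inclusion $j\colon C(sV,W)\into T^c(sV)$ and compose with $\pi$ to obtain a Lie coalgebra map $\pi\circ{\mathcal Lie}^{c}(j)\colon{\mathcal Lie}^{c}(C(sV,W))\to L^c(sV)$. Its image is a Lie subcoalgebra of $L^c(sV)$ whose weight-$2$ component is $\pi^{(2)}(W)=s^2 qR$ (the summand $\Lambda$ dies, while $\pi^{(2)}$ is the identity on $s^2 qR\subseteq\wedge^2(sV)$); hence, by the universal property of $(qA)^\dual$ as the maximal Lie subcoalgebra of $L^c(sV)$ with weight-$2$ part contained in $s^2 qR$ (Mill\`{e}s, \cite{JBMilles12}; see the discussion preceding the statement), this map factors as ${\mathcal Lie}^{c}(C(sV,W))\xrightarrow{\ g\ }(qA)^\dual\xrightarrow{\ \iota\ }L^c(sV)$. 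Transposing $g$ across the adjunction yields a coalgebra map $\widehat g\colon C(sV,W)\to X$, and the triangle identities together with $\U^c(L^c(sV))=T^c(sV)$ give $\U^c(\iota)\circ\widehat g=j$; thus $\widehat g$ is the inclusion $C(sV,W)\hookrightarrow X$ of subcoalgebras of $T^c(sV)$, i.e. $C(sV,W)\subseteq X$. Combining the two inclusions yields $\U^c((qA)^\dual)=C(sV,W)=(qA^+)^\dual$, and naturality in $A$ is immediate from the construction.

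The main obstacle I anticipate is the weight-$2$ bookkeeping: pinning down $W$ correctly — in particular that passing from the commutative to the associative presentation enlarges $s^2 qR\subseteq\wedge^2(sV)$ to $\Lambda\oplus s^2 qR$, where $\Lambda$ is precisely the extra ``symmetric'' summand that $\U^c$ contributes to $\U^c((qA)^\dual)^{(2)}$ — and checking that the canonical projection $(sV)^{\otimes 2}\onto\wedge^2(sV)$ carries $W$ onto $s^2 qR$ with kernel exactly $\Lambda$. Once these identifications are in place the remainder is formal adjunction manipulation (preservation of monomorphisms by $\U^c$, naturality of unit and counit, the triangle identities), and it is worth noting that the argument requires no finiteness hypothesis on $V$, since the universal properties of $(qA)^\dual$ and of $C(sV,W)$ hold in general.
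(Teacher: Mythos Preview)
Your proposal is correct and follows essentially the same strategy as the paper's proof: both arguments rest on the universal properties of $(qA)^\dual$ and $(qA^+)^\dual$ as maximal sub-(Lie-)coalgebras of $L^c(sV)$ and $T^c(sV)$ respectively, the adjunction ${\mathcal Lie}^c\dashv\U^c$, the identification $\U^c(L^c(sV))=T^c(sV)$, and the weight-$2$ computation that $s^2qR\oplus s^{\otimes 2}(\wedge^2V)$ is exactly the preimage of $s^2qR$ under $\pi^{(2)}\colon (sV)^{\otimes 2}\twoheadrightarrow\wedge^2(sV)$.

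The only difference is packaging. The paper verifies directly that $(qA^+)^\dual$ satisfies the universal property of $\U^c((qA)^\dual)$: it builds the would-be counit $j\colon{\mathcal Lie}^c((qA^+)^\dual)\to(qA)^\dual$ and then, for an arbitrary $f\colon{\mathcal Lie}^c(C)\to(qA)^\dual$, produces the unique factorization $h\colon C\to(qA^+)^\dual$. You instead assume $\U^c$ exists as a functor, embed $X=\U^c((qA)^\dual)$ into $T^c(sV)$ via $\U^c(\iota)$ (using that right adjoints preserve monomorphisms), and prove the two inclusions $X\subseteq C(sV,W)$ and $C(sV,W)\subseteq X$. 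Your reverse inclusion, transposed across the adjunction, is precisely the paper's construction of $j$ and its factorization property specialized to $C=(qA^+)^\dual$; your forward inclusion corresponds to the uniqueness clause. So the two proofs are reorganizations of one another rather than genuinely different routes.
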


\begin{proof}
First of all, let us construct a canonical Lie coalgebra morphism from ${\mathcal Lie}^{c}((qA^+)^\dual)$ to $(qA)^\dual$. By the definition of $(qA^+)^\dual$ in \cite{LV}, it is universal among the subcoalgebras $C$ of the cofree coalgebra $T^c(sV)$ such that the composite 
\begin{equation}\label{SESKDcoalg}
C\,\hookrightarrow\,T^c(sV) \,\twoheadrightarrow\,(sV)^{\otimes 2}/(s^2qR\oplus s^2[V,\,V])
\end{equation}
is zero, where $\Sym^2(V)\hookrightarrow V\otimes V$ is the symmetrization. Let $i_{\mathit{Ass}}$ (resp., $i_{\mathit{Lie}}$) be the canonical monomorphism from $(qA^+)^\dual$ to $T^c(sV)$ (resp., from $(qA)^\dual$ to $L^c(sV)$), and $\pi\,:\,{\mathcal Lie}^{c}(T^c(sV))\rightarrow L^c(sV)$ be the natural Lie coalgebra morphism induced by $(T^c(sV))\cong \U^c(L^c(sV))$. Since $\wedge^2(sV)\cong (sV)^{\otimes 2}/s^2[V,\,V]$, the following composition 
$${\mathcal Lie}^{c}((qA^+)^\dual)\,\xrightarrow{{\mathcal Lie}^{c}(i_{\mathit{Ass}})}\,{\mathcal Lie}^{c}(T^c(sV))\,\xrightarrow{\pi}\, L^c(sV)\,\twoheadrightarrow\,\wedge^2(sV)/s^2qR $$
is zero. Therefore, there is a unique Lie coalgebra morphism, denoted by $j$, making the following diagram commutes:
\begin{equation}\label{KDLiecoalgKDcoalgCD1}
\begin{diagram}[height=0.8cm]
(qA)^\dual & & \rTo^{i_{\mathit{Lie}}} & & L^c(sV)\\
& \luTo_{j} & & \ruTo_{\ \pi\circ {\mathcal Lie}^{c}(i_{\mathit{Ass}})} &\\
& & {\mathcal Lie}^{c}((qA^+)^\dual)\,. & &
\end{diagram}
\end{equation}

Let $C$ be a graded coalgebra over $k$, and suppose one has a Lie coalgebra morphism $f\,:\,{\mathcal Lie}^{c}(C) \rightarrow (qA)^\dual $. By the universal property of the cofree Lie coalgebra $L^c(sV)$, the Lie coalgebra morphism $i_{\mathit{Lie}}\circ f$ is determined by a linear map $g$ from ${\mathcal Lie}^{c}(C)$ to $sV$. Since the underlying graded vector space of ${\mathcal Lie}^{c}(C)$ is $C$, the very same $g$ determines a coalgebra morphism from $C$ to $T^c(sV)$, denoted by $\tilde{g}$. And there is a commutative diagram:
\begin{equation}\label{KDLiecoalgKDcoalgCD2}
\begin{diagram}[height=0.8cm]
{\mathcal Lie}^{c}(C) & & \rTo^{i_{\mathit{Lie}}\circ f} & & L^c(sV)\\
& \rdTo_{{\mathcal Lie}^{c}(\tilde{g})\ } & & \ruTo_{\pi} &\\
& & {\mathcal Lie}^{c}(T^c(sV))\,. & &
\end{diagram}
\end{equation}

Since the composite
$${\mathcal Lie}^{c}(C)\,\xrightarrow{i_{\mathit{Lie}}\circ f}\,L^c(sV)\,\twoheadrightarrow\,\wedge^2(sV)/s^2qR $$
is zero, for every $c\in C$, 
$$(g\otimes g)\circ (\Delta(c)-\tau_{(12)}\Delta(c))\,=\,(g\otimes g)\circ\ ]c[\ \,\in\,s^2qR\,.$$
Hence, for every $c\in C$, 
$$(g\otimes g)\circ \Delta(c)\in s^2qR\oplus s^2[V,\,V]\,,$$
which implies that there exist a unique coalgebra morphism $h\,:\,C \rightarrow (qA^+)^\dual$ making the following diagram commutes:
\begin{equation}\label{KDLiecoalgKDcoalgCD3}
\begin{diagram}[height=0.7cm]
& & (qA^+)^\dual & &\\
& \ruTo^{h} & & \rdTo^{i_{\mathit{Ass}}}&\\
C & & \rTo^{\tilde{g}} & & T^c(sV)\,.
\end{diagram}
\end{equation}
Combing the commutative diagrams \eqref{KDLiecoalgKDcoalgCD1}, \eqref{KDLiecoalgKDcoalgCD2}, and \eqref{KDLiecoalgKDcoalgCD3}, we obtain
$$i_{\mathit{Lie}}\circ f\,=\,\pi\circ {\mathcal Lie}^{c}(i_{\mathit{Ass}})\circ {\mathcal Lie}^{c}(h)\,=\,i_{\mathit{Lie}}\circ j \circ {\mathcal Lie}^{c}(h)\,.$$
Since $i_{\mathit{Lie}}$ is a monomorphism, one has
$$f\,=\,j \circ {\mathcal Lie}^{c}(h)\,.$$
Finally, suppose there exist another coalgebra morphism $h'\,:\,C \rightarrow (qA^+)^\dual$ such that $f=j \circ {\mathcal Lie}^{c}(h')\,.$ It can be shown that $h'$ could make diagram \eqref{KDLiecoalgKDcoalgCD3} commutes as well. It follows that $h=h'$, which concludes the proof of the desired theorem.
\end{proof}

\begin{remark}
Let $\GrCHAlg_k$ denote the category of graded commutative Hopf algebras over $k$. It is known that the universal coenveloping coalgebra functor $\U^c\,:\, \GrCoLAlg_k \rightarrow \GrCoAlg_k$ can be factorized as
$$\GrCoLAlg_k \,\xrightarrow{\U^c_{H}}\,\GrCHAlg_k\,\xrightarrow{F}\,\GrCoAlg_k\,,$$
where $F\,:\,\GrCHAlg_k\rightarrow \GrCoAlg_k$ is the forgetful functor (see \cite{Michaelis80} for the construction of $\U^c_{H}$). Hence, $(qA^+)^\dual$ carries the structure of a graded commutative Hopf algebra.
\end{remark}


\begin{thebibliography}{}

\bibitem{BKR}
Yu. Berest, G. Khachatryan and A. Ramadoss, {\it Derived representation schemes and cyclic homology}, Adv. Math. \textbf{245} (2013), 625--689.

\bibitem{BW20}
M. K. Brown and M. E. Walker, {\it A Chern-Weil formula for the Chern character of a perfect curved module}, J. Noncommut. Geom. \textbf{14} (2020), no. 2, 709--772.

\bibitem{CE56}
H. Cartan and S. Eilenberg, {\it Homological algebra}, Princeton University Press, Princeton, N. J., 1956.

\bibitem{Connes85}
A. Connes, {\it Noncommutative differential geometry}, Inst. Hautes \'{E}tudes Sci. Publ. Math. No. 62 (1985), 257--360.

\bibitem{FT85}
B. L. Feigin and B. L. Tsygan, {\it Additive K-Theory and crystalline cohomology}, Funct. Anal. Appl. \textbf{19} (1985), no. 2, 124--132.

\bibitem{FT87}
B. L. Feigin and B. L. Tsygan, {\it Cyclic homology of algebras with quadratic relations, universal enveloping algebras and group algebras}, Lecture Notes in Math. \textbf{1289}, Springer, Berlin, 1987, 210--239.

\bibitem{GK94}
V. Ginzburg and M. Kapranov, {\it Koszul duality for operads}, Duke Math. J. \textbf{76} (1994), no. 1, 203--272.

\bibitem{Good85}
T. G. Goodwillie, {\it Cyclic homology, derivations, and the free loopspace}, Topology \textbf{24} (1985), 187--215.

\bibitem{HM12}
J. Hirsh and J. Mill\`{e}s, {\it Curved Koszul duality theory}, Math. Ann. \textbf{354} (2012), no. 4, 1465--1520.

\bibitem{Hoch45}
G. Hochschild, {\it On the cohomology groups of an associative algebra}, Ann. of Math. \textbf{46} (1945), 58--67.

\bibitem{Idrissi18}
N. Idrissi, {\it Curved Koszul duality of algebras over unital versions of binary operads}, (2018), \texttt{arXiv:1805.01853}.

\bibitem{Jones87}
J. D. S. Jones, {\it Cyclic homology and equivariant homology}, Invent. Math. \textbf{87} (1987), 403--423.

\bibitem{Kassel88}
C. Kassel, {\it L'homologie cyclique des alg\`ebres enveloppantes}, Invent. Math. \textbf{91} (1988), no. 2, 221--251.

\bibitem{Kassel92}
C. Kassel, {\it Cyclic homology of differential operators, the Virasoro algebra and a $q$-analogue}, Comm. Math. Phys. \textbf{146} (1992), no. 2, 343--356.

\bibitem{Kassel06}
C. Kassel, {\it Homology and cohomology of associative algebras. A concise introduction to cyclic homology}, (2006), {\tt https://cel.archives-ouvertes.fr/cel-00119891/document}.

\bibitem{L}
J.-L. Loday, {\it Cyclic homology}, Grundl. Math. Wiss. \textbf{301}, 2nd Ed., Springer-Verlag, Berlin, 1998.

\bibitem{LQ}
J.-L. Loday and D. Quillen, {\it Cyclic homology and the Lie algebra homology of matrices}, Comment. Math. Helv. \textbf{59} (1984), no. 4, 569--591.

\bibitem{LV}
J.-L. Loday and B. Vallette, {\it Algebraic Operads}, Grundl. Math. Wiss. \textbf{346}, Springer, Heidelberg, 2012.

\bibitem{Lyuba13}
V. V. Lyubashenko, {\it Bar and cobar constructions for curved algebras and coalgebras}, Mat. Stud. \textbf{40} (2013), no. 2, 115--131.

\bibitem{Michaelis80}
W. Michaelis, {\it Lie coalgebras}, Adv. Math. \textbf{38} (1980), 1--54.

\bibitem{JBMilles12}
J. Mill\`{e}s, {\it The Koszul complex is the cotangent complex}, Int. Math. Res. Not. IMRN (2012), no. 3, 607--650.

\bibitem{Priddy}
S. B. Priddy, {\it Koszul resolutions}, Trans. Amer. Math. Soc. \textbf{152} (1970), 39--60.

\bibitem{PP05}
A. Polishchuk and L. Positselski, {\it Quadratic algebras}, University Lecture Series \textbf{37}, American Mathematical Society, Providence, RI, 2005.

\bibitem{Pos93}
L. E. Positselski, {\it Nonhomogeneous quadratic duality and curvature}, Funct. Anal. Appl. \textbf{27} (1993), no. 3, 197--204.

\bibitem{Pos11}
L. Positselski, {\it Two kinds of derived categories, Koszul duality, and comodule-contramodule correspondence}, Mem. Amer. Math. Soc. \textbf{212} (2011), no. 996.

\bibitem{Qui67}
D. Quillen, {\it Homotopical Algebra}, Lecture Notes in Math. \textbf{43}, Springer-Verlag, Berlin, 1967.

\bibitem{Qui69}
D. Quillen, \textit{Rational homotopy theory}, Ann. Math. \textbf{90} (1969), 205-295.

\bibitem{Qui89}
D. Quillen, {\it Algebra cochains and cyclic homology}, Inst. Hautes Etudes Sci. Publ. Math. \textbf{68} (1989), 139--174.

\bibitem{SchSta85}
M. Schlessinger and J. Stasheff, {\it The Lie algebra structure of tangent cohomology and deformation theory}, J. Pure Appl. Algebra \textbf{38} (1985), 313--322.

\bibitem{Sridharan61}
R. Sridharan, {\it Filtered algebras and representations of Lie algebras}, Trans. Amer. Math. Soc. \textbf{100} (1961), 530--550.

\bibitem{Tsygan83}
B. L. Tsygan, {\it Homology of matrix Lie algebras over rings and the Hochschild homology}, Uspekhi Mat. Nauk \textbf{38} (1983), no. 2, 217--218 (Russian).

\bibitem{Weibel94}
C. A. Weibel, {\it An introduction to homological algebra}, Cambridge Studies in Advanced Mathematics, \textbf{38}, Cambridge University Press, Cambridge, 1994.
\end{thebibliography}
\end{document}